\theoremstyle{plain}
\newtheorem{thm}{Theorem} 
\newtheorem*{thmA}{Theorem \ref{th:A}} 
\newtheorem*{thmB}{Theorem \ref{th:B}} 
\newtheorem*{thmC}{Theorem \ref{th:C}}
\newtheorem*{defiV}{Definition \ref{defi:volume}}
\newtheorem{lem}[thm]{Lemma}
\newtheorem{cor}[thm]{Corollary}
\newtheorem{prop}[thm]{Proposition} 
\newtheorem*{thm*}{Theorem}
\newtheorem*{cor*}{Corollary}
\newtheorem*{prop*}{Proposition}
\newtheorem*{claim*}{Claim}
\theoremstyle{definition}
\newtheorem{ex}[thm]{Example}
\newtheorem{defi}[thm]{Definition} 
\newtheorem*{defi*}{Definition}
\newtheorem*{ex*}{Example} 
\newtheorem*{rmk*}{Remark}
\newtheorem{rmk}[thm]{Remark}
\let\ssection=\section
\renewcommand{\section}{\setcounter{equation}{0}\ssection}
\newtheorem*{namedtheorem}{\theoremname}
\newcommand{\theoremname}{testing}
\theoremstyle{remark}
\numberwithin{thm}{section}
\newcommand{\BR}{\mathbb R}			
			\newcommand{\BZ}{\mathbb Z}
\newcommand{\CA}{\mathcal A}		\newcommand{\CB}{\mathcal B}
\newcommand{\CC}{\mathcal C}		\newcommand{\calD}{\mathcal D}
\newcommand{\CE}{\mathcal E}		
\newcommand{\CG}{\mathcal G}		\newcommand{\CH}{\mathcal H}
		\newcommand{\CT}{\mathcal T}
		\newcommand{\CV}{\mathcal V}
		\newcommand{\CX}{\mathcal X}
		\newcommand{\CZ}{\mathcal Z}
\newcommand{\co}{\colon\thinspace}
\newcommand{\la}{\langle}
\newcommand{\ra}{\rangle}
\newcommand{\ocap}{{\buildrel \circ \over\cap \, }}
\DeclareMathOperator{\Mod}{MCG}		
\DeclareMathOperator{\Out}{Out}		
\DeclareMathOperator{\Aut}{Aut}		
\DeclareMathOperator{\IA}{IA}			
\DeclareMathOperator{\SL}{SL}		
\DeclareMathOperator{\GL}{GL}		
\DeclareMathOperator{\Sp}{Sp}		
\DeclareMathOperator{\vol}{vol}		
\DeclareMathOperator{\rank}{rank}		
\begin{document}

\title[Twisting out fully irreducible automorphisms]{Twisting out
  fully irreducible automorphisms} 

\author[M.~Clay]{Matt Clay}
\address{Dept.\ of Mathematics \\
  Allegheny College\\
  Meadville, PA 16335}
\email{mclay@allegheny.edu}

\author[A.~Pettet]{Alexandra Pettet}
\address{Dept.\ of Mathematics \\
  University of Michigan\\
  Ann Arbor, MI 48109}
\email{apettet@umich.edu}

\date{\today}
\thanks{\tiny The second author is partially supported by NSF grant
  DMS-0856143 and NSF RTG DMS-0602191}

\begin{abstract}
  By a theorem of Thurston, in the subgroup of the mapping class group
  generated by Dehn twists around two curves which fill, every element
  not conjugate to a power of one of the twists is pseudo-Anosov. We
  prove an analogue of this theorem for the outer automorphism group
  of a free group.
  \end{abstract}

\maketitle

\section{Introduction}\label{sc:intro}

A {\it fully irreducible} element of the outer automorphism group
$\Out F_k$ of a free group $F_k$ is characterized by the property that
no nontrivial power fixes the conjugacy class of a proper free factor
of $F_k$. Considered to be analogous to pseudo-Anosov elements of the
mapping class group (see \cite{Casson-Bleiler} or \cite{FLP}), fully
irreducible elements play a similarly important role in the study of
$\Out F_k$. Levitt and Lustig \cite{Levitt-Lustig} showed for instance
that fully irreducible elements exhibit North-South dynamics on the
closure of Culler--Vogtmann's Outer space, the projectivized space of
minimal very small actions of $F_k$ on $\BR$--trees
\cite{Bestvina-Feighn,Cohen-Lustig}. More recently Algom-Kfir
\cite{Algom-Kfir} proved that axes of fully irreducibles in Outer
space, equipped with the Lipschitz metric, are strongly contracting,
indicating that this class of outer automorphisms should be useful
towards understanding the geometry of $\Out F_k$.

In this paper we present a method for constructing fully irreducible
elements of $\Out F_k$. Our approach is to replicate the following
result of Thurston concerning pseudo-Anosov mapping classes: a pair of
Dehn twists around filling simple closed curves generate a nonabelian
free group in which any element not conjugate to a power of one of the
twists is pseudo-Anosov \cite{ar:T88}. The irreducible outer
automorphisms we construct have the additional property of being {\it
  atoroidal}; that is, none of their nontrivial powers fix a conjugacy
class of $F_k$. By theorems of Bestvina--Feighn
\cite{Bestvina-Feighn_Combination}, Brinkmann \cite{Brinkmann}, and
Gersten \cite{Gersten_Iso}, the atoroidal elements of $\Out F_k$ are
precisely the {\it hyperbolic} elements, consisting of exactly those
elements with hyperbolic mapping tori, and so we will use only the
latter term.

Before stating precisely our main theorem, we briefly recall some
known constructions of fully irreducible elements of $\Out F_k$.

\medskip
\noindent {\bf Geometric:} By Thurston's theorem, we obtain
pseudo-Anosov homeomorphisms from two Dehn twists around filling
curves on a surface $S$ with a single boundary component. From an
identification $\pi_1 (S) \cong F_k$, any such pseudo-Anosov induces a
fully irreducible outer automorphism of $F_k$. Such an outer
automorphism is necessarily not hyperbolic as the conjugacy class of
the element of $F_k$ corresponding to the boundary component of $S$ is
periodic.  

\medskip
\noindent {\bf Homological:} As in the case of the mapping class group
\cite{Casson-Bleiler,Margalit-Spallone}, there is a homological
criterion that ensures an outer automorphism is fully irreducible.
Namely, Gersten and Stallings \cite{Gersten-Stallings} gave algebraic
criteria for fully irreducibility, providing sufficient conditions in
terms of the matrix corresponding to the action of the outer
automorphism on the homology of $F_k$.  This provides examples of
hyperbolic fully irreducible elements, but the action on homology is
necessarily nontrivial.

\medskip

Our construction begins with an analogy to surfaces: a simple closed
curve on a surface determines a splitting of the surface group over
the cyclic subgroup generated by the curve. For $\Out F_k$, the role
of a simple closed curve can be taken by a splitting of $F_k$ over a
cyclic subgroup. We prove using an appropriate notion of Dehn twist
automorphism (defined by a splitting of $F_k$ over a cyclic subgroup)
and of filling splittings:

\begin{thmA}
  Let $\delta_1$ and $\delta_2$ be the Dehn twists of
  $F_k$ for two filling cyclic splittings of $F_k$. Then
  there exists $N = N(\delta_1,\delta_2)$ such that for $m,n >N$:
  \begin{enumerate} 
  \item $\la \delta_1^m, \delta_2^n \ra$ is isomorphic to the
    free group on two generators; and
  
  \item if $\phi \in \la \delta_1^m, \delta_2^n \ra$ is not conjugate
    to a power of either $\delta^m_1$ or $\delta^n_2$, then $\phi$ is
    a hyperbolic fully irreducible element of $\Out F_k$.
  \end{enumerate}
\end{thmA}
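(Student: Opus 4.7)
The strategy is a ping-pong argument, modeled on Thurston's surface argument, carried out on an appropriate dynamical space for $\Out F_k$. The most natural candidates are the closure $\overline{CV}_k$ of Culler--Vogtmann Outer space (i.e.\ the space of projectivized very small $F_k$--trees) and the space $\mathbb{P}\mathrm{Curr}(F_k)$ of projectivized geodesic currents; I will set things up so that both are available, since hyperbolicity is naturally detected by currents and full irreducibility by trees.

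\emph{Step 1 (Dynamics of a single twist).} A Dehn twist $\delta$ associated to a cyclic splitting $F_k = A *_{\la c\ra} B$ or $A*_{\la c\ra}$ fixes the associated Bass--Serre tree $T$, but its iterates should nevertheless display attracting/repelling behavior on suitable subsets. The plan is to produce, for each $\delta_i$, an attracting tree $T_i^+$ and a repelling tree $T_i^-$ in $\overline{CV}_k$ (and analogous currents $\mu_i^{\pm}$), together with open neighborhoods $U_i^\pm$ of $T_i^\pm$ such that high powers of $\delta_i$ send the complement of $U_i^-$ into $U_i^+$. For twists along cyclic splittings the trees $T_i^\pm$ and the currents $\mu_i^\pm$ can be constructed by iterating $\delta_i^{\pm n}$ on a transverse tree/current and passing to the limit, together with the observation that the conjugacy class of $c_i$ survives in $T_i^\pm$.

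\emph{Step 2 (Filling $\Rightarrow$ disjointness).} Translate the filling hypothesis on the pair of splittings into the statement that $\{T_1^{\pm}\} \cap \{T_2^{\pm}\} = \emptyset$ (equivalently $\{\mu_1^{\pm}\} \cap \{\mu_2^{\pm}\} = \emptyset$), so that we can choose disjoint neighborhoods $U_1^+, U_1^-, U_2^+, U_2^-$. The content is that a tree simultaneously fixed by $\delta_1$ and $\delta_2$ would force a common invariant structure (an invariant free factor, or an invariant conjugacy class) for the two splittings, contradicting filling. Choose $N$ so that, for $m,n>N$, $\delta_1^m$ maps the complement of $U_1^-$ into $U_1^+$ and $\delta_2^n$ maps the complement of $U_2^-$ into $U_2^+$.

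\emph{Step 3 (Ping-pong for the free group).} With the North--South data of Steps 1--2 in place, the classical ping-pong lemma applies verbatim on $\overline{CV}_k$: every nontrivial reduced word in $\delta_1^m$ and $\delta_2^n$ moves a well-chosen test point into $U_1^+\cup U_2^+\cup U_1^-\cup U_2^-$, hence is nontrivial in $\Out F_k$. This gives assertion (1).

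\emph{Step 4 (Fully irreducible and hyperbolic).} Let $\phi\in\la \delta_1^m,\delta_2^n\ra$ be a reduced word that is not conjugate to a power of $\delta_i$. Reading the word cyclically, the ping-pong shows that $\phi$ has an attracting fixed tree $T_\phi^+$ lying in some $U_{i}^+$ and a repelling fixed tree $T_\phi^-$ in some $U_j^-$, both distinct from all the $T_i^{\pm}$ (here the hypothesis that $\phi$ is not conjugate to a twist power is essential to avoid accidentally landing at one of the $T_i^{\pm}$). To deduce that $\phi$ is fully irreducible I invoke the characterization from Levitt--Lustig: an outer automorphism with a pair of distinct attracting/repelling fixed trees in $\overline{CV}_k$ on which it acts with North--South dynamics must be fully irreducible. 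For hyperbolicity, the parallel ping-pong on $\mathbb{P}\mathrm{Curr}(F_k)$ shows that the unique $\phi$--invariant currents are $\mu_\phi^\pm$, neither of which is carried by a single conjugacy class (as $\mu_\phi^{\pm}\notin\{\mu_i^{\pm}\}$ and any rational current fixed by $\phi$ would have to be one of these), so no conjugacy class of $F_k$ is periodic under $\phi$.

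\emph{Main obstacle.} Step 1 is the crux: unlike pseudo-Anosovs, a Dehn twist $\delta$ has large fixed set in $\overline{CV}_k$ and in $\mathbb{P}\mathrm{Curr}(F_k)$ (every tree collapsing to the splitting of $\delta$ is fixed), so there is no genuine North--South dynamics on all of $\overline{CV}_k$. One must carefully carve out the attracting/repelling trees $T_i^\pm$, verify that iteration of $\delta_i^n$ drives everything outside an arbitrarily small neighborhood of the fixed simplex into a small neighborhood of $T_i^+$, and maintain enough control on these neighborhoods to carry out the simultaneous ping-pong on trees and currents. Achieving this local North--South behaviour for cyclic-splitting twists, together with the precise translation of the filling condition into disjointness of the limit sets, is where the real work will lie.
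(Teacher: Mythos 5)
Your proposal takes a genuinely different route from the paper, but as written it has two serious gaps, both located exactly where you defer the work. The paper does not do ping-pong on $\overline{CV}_k$ or on currents at all: it applies Hamidi-Tehrani's quantitative ping-pong lemma to the discrete set $\CX$ of conjugacy classes of proper free factors and cyclic subgroups, with the ``size'' function $|X| = \vol_{T_1}(X)+\vol_{T_2}(X)$ built from the free volume of Definition \ref{defi:volume}, and with the tables $\CX_i$ cut out by the ratio $\vol_{T_1}(X)/\vol_{T_2}(X)$ against an irrational $\lambda$. The engine is Theorem \ref{th:B}, a bounded-cancellation estimate showing $\vol_{T_2}(\delta_1^{\pm n}(X))$ grows linearly in $n$ with slope $\vol_{T_1}(X)\,\ell_{T_2}(c_1)$. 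The conclusion of the ping-pong is that any $\phi$ not conjugate to a twist power acts on $\CX$ with no periodic points, which is verbatim the definition of hyperbolic fully irreducible. Full irreducibility is never routed through fixed trees or North--South dynamics.

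The first gap in your plan is Step 1. A Dehn twist $\delta_i$ is a linearly growing automorphism whose fixed-point set in $\overline{CV}_k$ (and in the space of currents) is large: besides the Bass--Serre tree $T_i$ itself, $\delta_i$ fixes every tree in which the twisted factor is elliptic, every current supported on conjugacy classes it preserves, etc. You acknowledge this in your ``main obstacle'' paragraph, but the uniform statement you need for ping-pong --- that $\delta_i^{n}$ maps the entire compact set $\overline{CV}_k \setminus U_i^-$ into $U_i^+$ for large $n$ --- is false without excising the whole fixed locus, and controlling neighborhoods of that locus simultaneously for $\delta_1$ and $\delta_2$ is not a routine matter; it is essentially the content you would have to supply, and the filling hypothesis enters precisely there in a way Step 2 does not make precise. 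The second gap is Step 4: the ``characterization from Levitt--Lustig'' you invoke is the converse of what they prove. Levitt--Lustig show that fully irreducible elements act with North--South dynamics on $\overline{CV}_k$; having a pair of distinct attracting/repelling fixed trees does not imply full irreducibility (reducible hyperbolic automorphisms also have attracting fixed trees), so this step rests on a non-theorem. To see how these issues are avoided, note that in the paper's argument the object being ping-ponged is a free factor or cyclic subgroup itself, so ``no periodic orbit'' immediately yields both full irreducibility and hyperbolicity with no appeal to dynamics on a compactified space.
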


Kapovich-Lustig \cite{KL_Pingpong} and Hamenst\"adt \cite{Hamenstadt} have recently, using different methods, given another construction of hyperbolic fully irreducible elements in $\Out F_k$.
Namely, they show that given two hyperbolic fully irreducible elements
$\phi, \psi \in \Out F_k$, then either the subgroup $\la \phi, \psi
\ra$ is virtually cyclic, or there is a constant $N = N(\phi,\psi)$
such that for all $m,n \geq N$ the subgroup $\la \phi^m, \psi^n \ra$ is
isomorphic to the free group on two generators and every nontrivial
element of the subgroup is hyperbolic and fully irreducible.

Theorem \ref{th:A} produces new examples of fully irreducible
elements, not attained by previous methods. For instance, we can
construct examples of hyperbolic (and therefore obtainable by
Thurston's theorem) fully irreducible elements that act trivially on
homology. Papadopoulos used Thurston's construction of pseudo-Anosov
homeomorphisms that act trivially on homology to construct for any
symplectic matrix in $\Sp(2g,\BZ)$ a pseudo-Anosov homeomorphism whose
action on the first homology of the surface is the given matrix
\cite{Papadopoulos}.  In \cite{Clay-Pettet}, we use Theorem \ref{th:A}
and the techniques developed within this current paper, to construct
for any matrix in $\GL(k,\BZ)$ a fully irreducible hyperbolic element
whose action on the first homology of $F_k$ is the given matrix.

Consider the descending sequence of subgroups of $\Aut F_k$ given by   
\[ \Aut F_k \to \Aut(F_k/\Gamma^{i+1}(F_k)) \] where $\Gamma^2(F_k) =
[F_k,F_k]$, the commutator subgroup of $F_k$, and $\Gamma^{i+1}(F_k) =
[F_k,\Gamma^i(F_k)]$.
The {\it Johnson filtration} of $\Out F_k$ is the induced sequence  $J_k^1 \supset J_k^2 \supset \cdots$ of subgroups of $\Out F_k$; the group $J_k^1$ is analogous to the {\it Torelli subgroup} of the mapping class group of a surface. Observe that $[J_k^i,J_k^i] \subset J_k^{i+1}$, so that by applying
Theorem \ref{th:A} we have:

\begin{cor}\label{co:IA_k}
  For $k \geq 3$, there exist hyperbolic fully irreducible elements
  arbitrarily deep in the Johnson filtration for $\Out F_k$.
\end{cor}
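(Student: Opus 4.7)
The plan is to apply Theorem \ref{th:A} to Dehn twists coming from two filling cyclic splittings of $F_k$ and then locate hyperbolic fully irreducible elements arbitrarily deep in the Johnson filtration inside the derived series of the resulting rank-two free subgroup.

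First, I would observe that any Dehn twist $\delta$ along a cyclic splitting of $F_k$ lies in $\IA_k = J_k^1$: by construction $\delta$ acts on each vertex group of the splitting by conjugation by a power of the edge element, so $\delta(w)w^{-1} \in [F_k,F_k] = \Gamma^2(F_k)$ for every $w \in F_k$. Fix Dehn twists $\delta_1,\delta_2$ from two filling cyclic splittings of $F_k$ (available for $k \geq 3$); both lie in $J_k^1$. Then fix $m,n$ large enough that Theorem \ref{th:A} applies, so $H := \la \delta_1^m, \delta_2^n \ra$ is free of rank two and every element of $H$ not conjugate to a power of $\delta_1^m$ or $\delta_2^n$ is hyperbolic and fully irreducible.

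Next, I would push the derived series of $H$ into the Johnson filtration using the observed inclusion $[J_k^i, J_k^i] \subset J_k^{i+1}$. An induction on $i$ gives
\[
H^{(i)} \subset J_k^{i+1} \qquad (i \geq 0),
\]
where $H^{(i)}$ is the $i$-th derived subgroup of $H$. Since $H \cong F_2$, every $H^{(i)}$ is nontrivial (indeed free of infinite rank for $i \geq 1$). It then remains to exhibit, for each $i \geq 1$, a nontrivial element of $H^{(i)}$ that is not conjugate to any nonzero power of $\delta_1^m$ or $\delta_2^n$. Reading ``conjugate'' inside $H$, this is automatic from abelianization: elements of $H^{(i)} \subset [H,H]$ map trivially to $H^{\mathrm{ab}} \cong \BZ^2$, whereas $H$-conjugates of nontrivial powers of the generators do not. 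Under the broader ambient reading in $\Out F_k$, a Johnson-depth argument still works: by residual nilpotence of $F_k$ one has $\bigcap_r J_k^r = \{1\}$, and together with the standard torsion-freeness of the graded pieces $J_k^r/J_k^{r+1}$, the Johnson depth of $\delta_j^{mt}$ is a finite positive constant independent of $t \neq 0$ and preserved by $\Out F_k$-conjugation; for $i$ large, elements of $H^{(i)} \subset J_k^{i+1}$ sit strictly deeper than any such conjugate. In either interpretation, Theorem \ref{th:A} then shows that every such element is a hyperbolic fully irreducible element of $\Out F_k$ lying in $J_k^{i+1}$, and letting $i \to \infty$ produces hyperbolic fully irreducible elements arbitrarily deep in the Johnson filtration.

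The main subtlety is the last step under the ambient interpretation of ``conjugate'', where one needs the torsion-freeness of the Johnson graded pieces, a standard input from Andreadakis--Johnson theory that should be cited rather than re-derived. Under the in-subgroup interpretation of Theorem \ref{th:A} the argument reduces to the abelianization of $H$ and nothing further is needed.
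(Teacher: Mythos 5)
Your proposal is correct and is essentially the paper's intended argument: the paper offers no proof beyond noting that Dehn twists lie in $J_k^1$ and that $[J_k^i,J_k^i]\subset J_k^{i+1}$, so one descends the derived series of the free group $\la\delta_1^m,\delta_2^n\ra$ furnished by Theorem \ref{th:A}, exactly as you do, with your abelianization check handling the "not conjugate to a power of a generator" hypothesis. One small caution: your opening claim that \emph{any} Dehn twist lies in $\IA_k$ is justified only for twists arising from amalgamated (not HNN) splittings, where $\delta(t)=a_0t$ can act nontrivially on homology; this is harmless here since filling amalgamated splittings exist for $k\geq 3$ (e.g.\ Example \ref{ex:fill} and Remark \ref{rm:fill}).
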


To prove Theorem \ref{th:A}, we use methods necessarily very different
from Thurston's, which employed much of the rich geometry of
Teichm\"uller space. Our argument is based closely on an alternate,
more combinatorial proof due to Hamidi-Tehrani \cite{Hamidi-Tehrani}
that applies a variant on the usual ping pong argument to the
set of simple closed curves on a surface. Much of the work in our
paper is concerned with setting up a suitable substitute for the
intersection number of two simple closed curves on a surface, a key
ingredient in Hamidi-Tehrani's argument.


Observe that the intersection number between two curves $\alpha$ and
$\beta$ on a surface $S$ is equal to the combinatorial translation
length of the element $\alpha \in \pi_1(S)$ on the dual tree to lifts
of $\beta$ in the hyperbolic plane $\mathbb{H}^2$. This dual tree is
exactly the Bass--Serre tree for the splitting of the surface group
over the cyclic subgroup generated by $\beta$.  We formulate a
generalization of intersection numbers to finitely generated subgroups
$H$ of $F_k$.  In the following, $T^H$ denotes a minimal non-empty
$H$--invariant subtree of $T$.  %


\begin{defiV}
  Suppose $H$ is a finitely generated free group that acts on a
  simplicial tree $T$ such that the stabilizer of an edge is either
  trivial or cyclic. The \emph{free volume $\vol_T(H)$} of $H$ with
  respect to $T$ is the number of edges of the graph of groups
  decomposition $T^H/H$ with trivial stabilizer. 
\end{defiV}
\noindent It should be remarked that different notions of intersection
number have been developed by Scott--Swarup \cite{Scott-Swarup},
Guirardel \cite{Guirardel-number}, and Kapovich--Lustig
\cite{KL_Complex}, but that ours has been tailored to suit the needs
of our theorem.

The main ingredient in our proof of Theorem \ref{th:A} is the
following result concerning the growth of the free volume under
iterations of a Dehn twist:

\begin{thmB}
  Let $\delta_1$ be a Dehn twist associated to the very small cyclic
  tree $T_1$ with edge stabilizers generated by conjugates of the
  element $c_1$ and let $T_2$ be any other very small cyclic tree.
  Then there exists a constant $C = C(T_1,T_2)$ such that for any
  finitely generated malnormal or cyclic subgroup $H \subseteq F_k$
  with $\rank(H) \leq R$ and $n \geq 0$ the following hold:
  \begin{align*}
  \vol_{T_2}(\delta_1^{\pm n}(H)) &\geq 
  \vol_{T_1}(H) \bigl( n\ell_{T_2}(c_1) - C \bigr) - 
  M\vol_{T_2}(H)\\
  \vol_{T_2}(\delta_1^{\pm n}(H)) &\leq 
  \vol_{T_1}(H) \bigl( n\ell_{T_2}(c_1) + C \bigr) + 
  M\vol_{T_2}(H)
\end{align*}
where $M$ is the constant from Proposition \ref{prop:safe}.
\end{thmB}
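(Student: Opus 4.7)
The plan is to prove the two inequalities in parallel by building a concrete model for the minimal invariant subtree of $\delta_1^{\pm n}(H)$ acting on $T_2$, then counting edges in its quotient by this group.

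First I would reduce to a statement about a single group acting on two trees. Since $\delta_1^{\pm n}$ is an automorphism, $\vol_{T_2}(\delta_1^{\pm n}(H)) = \vol_{T_2^{(n)}}(H)$, where $T_2^{(n)}$ denotes $T_2$ with the action of $F_k$ precomposed by $\delta_1^{\mp n}$. Being the image of a very small cyclic tree under an automorphism, $T_2^{(n)}$ is again very small and cyclic, so the free volume $\vol_{T_2^{(n)}}(H)$ makes sense. The task is to estimate it from above and below in terms of $\vol_{T_1}(H)$, $\vol_{T_2}(H)$, and $\ell_{T_2}(c_1)$.

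Next I would give a Bass--Serre description of $T_2^{(n)}$ relative to $T_1$. Because $\delta_1$ is the Dehn twist along the cyclic splitting encoded by $T_1$, the tree $T_2^{(n)}$ admits an equivariant "blow-up" description: there is an equivariant map from (a refinement of) $T_1$ to $T_2^{(n)}$ in which each edge $e$ of $T_1$ with trivial stabilizer is expanded to a segment whose middle portion is the $c_1^{\pm n}$-translate of an arc of length $n\ell_{T_2}(c_1)\pm C_0$ along the axis of $c_1$ in $T_2$, while each edge of $T_1$ with stabilizer conjugate to $\la c_1 \ra$ maps into a fixed point or axis of the corresponding conjugate in $T_2$. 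The constant $C_0$ absorbs the discrepancy between the combinatorial and metric translation length of $c_1$ and depends only on $T_1$ and $T_2$. This blow-up picture, up to equivariant folding, pins down $T_2^{(n)H}$ inside the minimal subtree of the blow-up over $T_1^H$.

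To extract $\vol_{T_2^{(n)}}(H)$ I would pass to the quotient by $H$ and partition the trivial-stabilizer edges of $T_2^{(n)H}/H$ into two families: those that arise from the blown-up $c_1^{\pm n}$-segments sitting over the $\vol_{T_1}(H)$ trivial-stabilizer edges of $T_1^H/H$, and those that arise from pre-existing crossings in $T_2^H/H$. The first family contributes the main term $\vol_{T_1}(H)(n\ell_{T_2}(c_1) \pm C)$. The second family is controlled by Proposition~\ref{prop:safe}, which bounds the number of "stray" trivial-stabilizer edges by $M \vol_{T_2}(H)$. This yields exactly the two inequalities in the statement.

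The main obstacle will be the quotient step. A priori, distinct trivial-stabilizer edges of $T_1^H/H$ could give rise, after passing through the blow-up and folding, to segments in $T_2^{(n)H}/H$ that get identified with each other or collapse into segments already present in $T_2^H/H$. Ruling this out is exactly where the hypothesis that $H$ is malnormal or cyclic enters: malnormality forces distinct $H$-orbits of edges in $T_1^H$ to remain distinct after passing to the $T_2^{(n)}$-picture, because non-trivial identifications would force unexpected intersections of $H$-conjugates. Handling the residual identifications along the cyclic edge stabilizers, and showing that at most $O(\vol_{T_2}(H))$ trivial-stabilizer edges can be created or destroyed in the process, is what Proposition~\ref{prop:safe} is being invoked for. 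Once these identifications are under control, the edge count separates cleanly into the main term and the two error terms appearing in the bounds.
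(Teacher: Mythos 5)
Your overall strategy --- insert $c_1^{\pm n}$--segments at the places where $H$ crosses the splitting given by $T_1$, then count what survives in $T_2$--coordinates --- is the same as the paper's, which implements it with Stallings graphs via graph surgery (Lemma \ref{core1}) and graph composition (Corollary \ref{co:surgery}). But there is a genuine gap at exactly the step you flag as ``the main obstacle'': you have no mechanism for bounding how much of each inserted segment is destroyed by equivariant folding, and the two mechanisms you do name are not the right ones. The constant $C$ does not come from ``the discrepancy between the combinatorial and metric translation length of $c_1$''; it comes from bounded cancellation (Lemma \ref{lem:BCC}). One first normalizes $c_1$ to be $T_2$--reduced, after which folding can destroy only the extremal $B = BCC(\CT_1,\CT_2)$ edges of each inserted copy of $c_1^{\pm n}$ written in the $T_2$--adapted basis --- \emph{except} where the inserted segment is attached to a pre-existing power of $c_1$ inside $H$. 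Without bounded cancellation there is no a priori bound on the folding, and the main term $\vol_{T_1}(H)\bigl(n\ell_{T_2}(c_1)-C\bigr)$ cannot be extracted.

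Second, you misplace both the malnormality hypothesis and Proposition \ref{prop:safe}. Malnormality does not ``force distinct $H$--orbits of edges in $T_1^H$ to remain distinct''; nothing prevents folding from identifying portions of distinct inserted segments. The hypothesis enters only through Lemma \ref{lm:malnormal-freevolume}: it guarantees at most one edge orbit of the quotient graph of groups has nontrivial stabilizer, so that counting essential pieces computes free volume up to an additive $1$. And the term $M\vol_{T_2}(H)$ is not the count of a second family of ``stray pre-existing edges''; in the lower bound it is a \emph{loss} term, accounting for the fact that when a segment $c_1^{\pm n}$ is surgered in at a crossing vertex lying on a maximal $c_1$--chain of $\CG_1^H$, an unboundedly long initial portion of the new segment folds onto that chain and is pruned. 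Proposition \ref{prop:safe} rescues the estimate by showing that the essential pieces lost this way are first identified with \emph{safe} essential pieces of the old chains, and that the total number of such safe pieces is at most $M(\vol_{T_2}(H)+1)$, precisely because those pieces survive into the core graph for $H$ over $\Lambda_2$ and so are already counted by $\vol_{T_2}(H)$. This accounting is the heart of the proof and is absent from your plan: taking $H = \delta_1^{-n}(\la h \ra)$ shows the inserted segments really can cancel entirely against powers of $c_1$ already present in $H$, and the stated lower bound survives only because $\vol_{T_2}(H)$ is then itself of order $n$.
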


\noindent The inequalities in Theorem \ref{th:B} should be compared
with the following inequality from \cite{FLP} (see also \cite{Ivanov})
for simple closed curves $\alpha, \beta$, and $\gamma$ on a surface:
\[ |i(\delta_\beta^{\pm n}(\gamma),\alpha) -
ni(\gamma,\beta)i(\alpha,\beta)| \leq i(\gamma,\alpha) \] Here
$i(\cdot,\cdot)$ denotes the geometric intersection number of two
simple closed curves, and $\delta_\beta$ is the Dehn twist around the
curve $\beta$.  An asymptotic version of Theorem \ref{th:B} for cyclic
subgroups appears as a special case of Cohen and Lustig's ``Skyscraper
Lemma'' \cite[Lemma 4.1]{Cohen-Lustig}.


Although it is not essential to our main theorem, we describe a
property of our notion of intersection number which likens it to
intersection number for surfaces, as we consider it of independent
interest. Recall that if $\alpha$ and $\beta$ are simple closed curves
that fill a surface $S$, and if $\sigma$ is any hyperbolic metric on $S$, %
then there is constant $K$ such that for any simple closed curve
$\gamma$ on $S$:
\begin{equation}\label{eq:bilip-surface}
  \frac{1}{K}\ell_\sigma(\gamma) \leq i(\alpha,\gamma) +
  i(\beta,\gamma) \leq K\ell_\sigma(\gamma)
\end{equation}
where $\ell_\sigma(\gamma)$ is the length of the geodesic representing
$\gamma$ with respect to the metric $\sigma$. 

Now recall that Culler--Vogtmann's Outer space $CV_k$ consists of
minimal discrete free actions of $F_k$ on $\BR$--trees, normalized
such that the sum of the lengths of the edges in the quotient graph is
1 \cite{Culler-Vogtmann}. A point of $CV_k$, or its unprojectivized
version $cv_k$, plays the role of a marked hyperbolic metric on
$S$. There is a compactification $\overline{CV}_k$
\cite{Culler-Morgan} which is covered by $\overline{cv}_k$.  The space
$\overline{cv}_k$ is the space of minimal \emph{very small} actions of
$F_k$ on $\BR$--trees \cite{Bestvina-Feighn,Cohen-Lustig}.  Kapovich
and Lustig showed that if $T_1$ and $T_2$ are trees in
$\overline{cv}_k$ that are ``sufficiently transverse'', then for any
tree $T \in cv_k$ there is a constant $K$ such that for any element $g
\in F_k$:
\begin{equation}\label{eq:bilip-free}
  \frac{1}{K}\ell_T(g) \leq \ell_{T_1}(g) + \ell_{T_2}(g) \leq
  K\ell_T(g)
\end{equation}
where $\ell_T(\cdot)$ is the translation length function for the tree
$T$.  We show a different generalization of \eqref{eq:bilip-surface}.

\begin{thmC}
  Let $T_1$ and $T_2$ be two very small cyclic trees for $F_k$ that
  fill and $T \in cv_k$. Then there is a constant $K$ such that for
  any proper free factor or cyclic subgroup $X \subset F_k$:
  \begin{equation*}
    \frac{1}{K}\vol_T(X) \leq \vol_{T_1}(X) + \vol_{T_2}(X) \leq K\vol_T(X).
  \end{equation*}
\end{thmC}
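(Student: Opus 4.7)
The plan is to prove the two inequalities of Theorem \ref{th:C} separately. The upper bound reduces to a Lipschitz comparison of edge counts, while the lower bound requires the filling hypothesis and is the main obstacle.

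For the upper bound $\vol_{T_1}(X) + \vol_{T_2}(X) \leq K \vol_T(X)$, I would fix $F_k$-equivariant simplicial maps $T \to T_i$ for $i = 1, 2$, which exist because $T \in cv_k$ acts freely while each $T_i$ is very small. Such maps are Lipschitz with respect to the simplicial structures, with constants $L_i$ depending only on $T$ and $T_i$: each edge of $T$ maps to an edge-path of at most $L_i$ edges in $T_i$. Restricting to $T^X$ and composing with the nearest-point projection $T_i \to T_i^X$ produces an $X$-equivariant map whose image, being a nonempty $X$-invariant subtree of $T_i^X$, must equal $T_i^X$ by minimality. Passing to the quotient, the $\vol_T(X)$ edges of $T^X/X$ map onto an edge-set covering $T_i^X/X$ of cardinality at most $L_i \vol_T(X)$, and since $\vol_{T_i}(X)$ counts only the free edges among these, $\vol_{T_i}(X) \leq L_i \vol_T(X)$. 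Summing over $i$ gives the upper bound.

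For the lower bound, I would first handle the cyclic case $X = \langle g \rangle$. Since $T$ acts freely, $g$ is hyperbolic on $T$ and $\vol_T(\langle g \rangle) = \ell_T(g)$ as a combinatorial edge-count along the axis. In $T_i$, if $g$ is elliptic then $\vol_{T_i}(\langle g \rangle) = \ell_{T_i}(g) = 0$; if $g$ is hyperbolic, then every edge on the axis has stabilizer meeting $\langle g \rangle$ trivially (otherwise a power of $g$ would fix an edge, contradicting the hyperbolicity of all its powers), so the entire axis consists of free edges for the $\langle g \rangle$-action and $\vol_{T_i}(\langle g \rangle) = \ell_{T_i}(g)$. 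Next, one should verify that the filling hypothesis for $T_1, T_2$ implies the Kapovich--Lustig transversality condition needed for \eqref{eq:bilip-free}, giving $\ell_T(g) \leq K'(\ell_{T_1}(g) + \ell_{T_2}(g))$, and this settles the cyclic case.

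For $X$ a proper free factor of rank $r$, I would reduce to the cyclic case by choosing hyperbolic elements $g_1, \ldots, g_m \in X$ with $m = m(r)$ bounded, whose axes in $T$ cover $T^X$ with bounded multiplicity so that $\vol_T(X) \leq C \sum_j \ell_T(g_j)$. Applying the cyclic case to each $g_j$, combined with the inclusions $T_i^{\langle g_j \rangle} \subseteq T_i^X$ and a bound on how many $g_j$-axes share an edge of $T_i^X$, would give $\vol_T(X) \leq K''(\vol_{T_1}(X) + \vol_{T_2}(X))$. The main obstacle is this efficient-covering step: one must choose generators whose axes tile $T^X$ economically in $T$ and whose axes in each $T_i$ embed into $T_i^X$ with controlled overlap, so that summing translation lengths in $T_i$ yields an honest bound on $\vol_{T_i}(X)$ rather than a wild overcount.
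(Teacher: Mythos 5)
Your upper bound and the cyclic case of the lower bound are essentially right. The equivariant-Lipschitz-map argument for $\vol_{T_i}(X)\leq L_i\vol_T(X)$ is a valid (and slightly more direct) alternative to the paper's route, which instead reduces to $T=\tilde{\Lambda}_1$ and bounds the number of essential chains and vertices of $\tilde{\Lambda}_i^X/X$ by a uniform constant times its number of edges. The cyclic case is, as you say, exactly the Kapovich--Lustig inequality \eqref{eq:bilip-free}; the paper simply cites it, and the hypothesis it needs is (\textbf{F2}) (no common elliptic elements), which follows from (\textbf{F1}).

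The gap is in the free-factor case, and it is precisely the step you flag as the main obstacle. There is no choice of hyperbolic $g_1,\ldots,g_m\in X$ whose axes cover $T^X$ economically and for which $\sum_j\ell_{T_i}(g_j)\leq C\vol_{T_i}(X)$: the problem is not how many distinct axes share an edge of $T_i^X$, but how many times a single fundamental domain of the axis of $g_j$ runs over a given edge of $T_i^X/X$. A loop in the graph of groups $T_i^X/X$ representing $g_j$ can cross each edge arbitrarily often, so $\ell_{T_i}(g_j)$ can vastly exceed $\vol_{T_i}(X)$. Choosing the $g_j$ short in $T$ does not help, since that only yields $\ell_{T_i}(g_j)\leq L_i\ell_T(g_j)\leq C\vol_T(X)$, and comparing $\vol_T(X)$ with $\vol_{T_i}(X)$ is exactly what is to be proved. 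The paper avoids closed loops altogether: after reducing to $T=\tilde{\Lambda}_1$, it cuts the core graph $\Lambda_1^X$ at its branch vertices into at most $3k-3$ \emph{embedded} arcs, replaces each arc $\alpha$ by a cyclically reduced subword $\alpha'$ of at least half its length, applies \eqref{eq:bilip-free} to each $\alpha'$, and reinterprets $\ell_{T_1}(\alpha')+\ell_{T_2}(\alpha')$ as a count of essential pieces. Because the $\alpha'$ are embedded arcs of the core graph, these pieces contribute to $\vol_{T_1}(X)$, and (after folding into the core graph for $\Lambda_2$, with a loss of at most $2B+2$ pieces per arc by bounded cancellation) to $\vol_{T_2}(X)$, without multiplicity; the resulting additive error $(2B+2)(3k-3)$ is absorbed using $\vol_{T_1}(X)+\vol_{T_2}(X)\geq 1$, which is where the filling hypothesis enters a second time. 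Some such device for converting lengths into honestly disjoint contributions to free volume is what your outline is missing.
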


Our paper is organized as follows.  Section \ref{sc:prelim} recalls
well-known facts about $\Out F_k$ along with the definitions needed.
The only new material in this section is a discussion on ``filling''
cyclic trees.  In particular, we present a construction for producing
filling cyclic trees when $k \geq 3$.  In Section \ref{counting} we
describe how to compute the free volume of a finitely generated
subgroup of $F_k$ with respect to a cyclic tree.  This should be
compared to the ``no bigon'' condition for computing intersection
numbers between simple closed curves on a surface.  The main result of
Section \ref{sc:growth} is to give a proof of Theorem \ref{th:B}.  The
Hamidi-Tehrani ping pong argument is applied in Section
\ref{sc:pingpong} to prove Theorem \ref{th:A}. Finally, in Section \ref{sc:bilip} we
prove Theorem \ref{th:C}.

\bigskip


\noindent {\bf Acknowledgements.} The authors would like to thank Juan
Souto for helpful conversations, and for promoting this as an
interesting project. We also thank Mladen Bestvina for fielding our
questions, Michael Handel for pointing out an equivalent definition of
Definition \ref{filling} and for pushing us to remove the
``primitive'' hypothesis of the edge stabilizer from several of the
statements in an earlier version of this work, and the referee for
several helpful comments regarding this manuscript. The second author
is grateful to Yael Algom-Kfir, Vincent Guirardel, and Johanna
Mangahas for pleasant conversations related to this project.


\section{Preliminaries}\label{sc:prelim}

\subsection{Basics}\label{ssc:basics}


Let $F_k$ denote the rank $k$ non-abelian free group. For a basis $\CA
= \{ x_1, \ldots, x_k\}$ we fix a marked $k$--petaled rose $\Lambda =
\Lambda_\CA$: a graph with one vertex and $k$ oriented petals with an
identification to the set $\{ x_1, \ldots, x_k\}$, thus equipped with
an isomorphism $F_k \to \pi_1(\Lambda,\text{vertex})$. A marking of a
graph $\CG$ with $\pi_1(\mathcal{G}) \cong F_k$ is a homotopy
equivalence $\Lambda \to \CG$. An outer automorphism $\phi$ of the
free group determines a homotopy equivalence $\Phi\co \Lambda \to
\Lambda$. This gives a right action of $\Out F_k$ on the set of %
markings by precomposing the homotopy equivalence $\Lambda \to \CG$
by %
$\Phi$; that is, $\phi$ acts by changing the marking. The universal
cover of a marked graph $\CG$ is a tree $\tilde{\CG}$ equipped with a
free action of $F_k$; the set of such trees inherits the right action
of $\Out F_k$, which coincides with the action of $\Out F_k$ on Outer
space $CV_k$ or $cv_k$.


Given a simplicial map $f_0\co \CH_0 \to \CG$ between graphs, either
it is an immersion (i.e.,~locally injective), or there is some pair of
edges $e_1,e_2$ sharing a common initial vertex in $\CH_0$ that have
the same image under $f_0$. In case of the latter, let $\CH_1$ be the quotient graph of
$\CH_0$ obtained by identifying $e_1$ with $e_2$; then $f_0$ descends
to a well-defined map $f_1\co \CH_1 \to \CG$. We say that the map
$f_1: \CH_1 \to \CG$ is obtained from $f_0: \CH_0 \to \CG$ by a {\it
  fold}. Folding can be iterated until the resulting simplicial map
$f\co \CH \to \CG$ is an immersion of graphs \cite{Stallings}.  In the
case that $\CH$ has valence one vertices, adjacent edges can be iteratively pruned from $\CH$ to obtain a core graph $\CH_{core}$ (a graph
in which every edge belongs to at least one cycle) to which $f$
restricts to a map $f_{core}\co \CH_{core} \to \CG$.

Using folding, we can associate to the conjugacy class of a finitely
generated subgroup $H$ of $F_k$ an immersion of a core graph
$\CG^H_\CA \to \Lambda_\CA$.  Fix a basis for $H$, and let $\CH$ be a
$\rank(H)$--petaled rose, where each petal is subdivided into edges labeled according to the associated word in the basis $\CA$. The labels
determine a map $\CH \to \Lambda_\CA$; after a series of folds, the
resulting map is an immersion of graphs which can be pruned to obtain an
immersion of the core graph $\CG^H_\CA \to \Lambda_\CA$.  The
immersion $\CG^H_\CA \to \Lambda_\CA$ does not depend on the initial
graph $\CH$.  We refer to Stallings' paper \cite{Stallings} for more
details.

For a basis $\CA$ of $F_k$ and element $x \in F_k$, we let $|x|_\CA$
denote the reduced word length of $x$ with respect to the basis $\CA$.
When dealing with word length in free groups the following lemma due
to Cooper is indispensable:
\begin{lem}[Bounded cancellation \cite{Cooper}]\label{lem:BCC}
  Suppose $\CA_1$ and $\CA_2$ are bases for the free group $F_k$.
  There is a constant $C = C(\CA_1,\CA_2)$ such that if $w$ and $w'$
  are two elements of $F_k$ where:
  \[ |w|_{\CA_1} + |w'|_{\CA_1} = |ww'|_{\CA_1} \]
  then
  \[ |w|_{\CA_2} + |w'|_{\CA_2} - |ww'|_{\CA_2} \leq 2C. \]
\end{lem}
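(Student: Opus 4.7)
The plan is to realize both sides of the statement geometrically on the Cayley trees of $F_k$ and to extract the cancellation bound from a bounded-backtracking property of the equivariant change-of-basis map between them.

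Write $T_i$ for the Cayley tree of $F_k$ with respect to $\CA_i$, so that $d_{T_i}(1,g)=|g|_{\CA_i}$ for every $g\in F_k$. Let $\Phi\co T_1\to T_2$ be the $F_k$-equivariant simplicial map that sends the edge at vertex $g$ labelled $a\in\CA_1$ to the edge-path at $g$ spelling $a$ letter-by-letter in $\CA_2$; it is $L$-Lipschitz with $L:=\max_{a\in\CA_1}|a|_{\CA_2}$, and on vertices $\Phi(g)=g$. The hypothesis $|w|_{\CA_1}+|w'|_{\CA_1}=|ww'|_{\CA_1}$ says exactly that the vertex $w$ lies on the $T_1$-geodesic $[1,ww']$, while the tree identity $d(x,y)+d(y,z)-d(x,z)=2\,d(y,[x,z])$ applied in $T_2$ at $x=1$, $y=w$, $z=ww'$ gives
\[
|w|_{\CA_2}+|w'|_{\CA_2}-|ww'|_{\CA_2}=2\,d_{T_2}\bigl(\Phi(w),\,[\Phi(1),\Phi(ww')]\bigr).
\]
It therefore suffices to produce a constant $C=C(\CA_1,\CA_2)$ bounding this $T_2$-distance.

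This in turn will follow from a bounded-backtracking estimate for $\Phi$: there exists $C$ such that for every reduced edge-path $\gamma\subset T_1$, every point of $\Phi(\gamma)$ lies within $T_2$-distance $C$ of the geodesic $[\Phi(\gamma_-),\Phi(\gamma_+)]$. Taking $\gamma=[1,ww']$ and the point $\Phi(w)\in\Phi(\gamma)$ then yields the cancellation bound $2C$. To establish the estimate I would introduce the symmetric equivariant $L'$-Lipschitz map $\Psi\co T_2\to T_1$ obtained by spelling each $\CA_2$-generator in $\CA_1$, and observe that $\Psi\circ\Phi$ descends to a self-map of the rose $\Lambda_{\CA_1}$ homotopic to the identity. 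By compactness of the rose there is a uniform tracking constant $D$ such that $d_{T_1}(x,\Psi\Phi(x))\leq D$ for every vertex $x$ of $T_1$. A backtracking excursion in $\Phi(\gamma)$ of length $\ell$ based at a point of $[\Phi(\gamma_-),\Phi(\gamma_+)]$ must be traversed twice by $\Phi(\gamma)$---once out, once back---and pulling this round trip back by $\Psi$ produces a matching pair of traversals in $\Psi\Phi(\gamma)$ that the reduced path $\gamma$ does not contain. The tracking tolerance $D$ can absorb only a bounded such discrepancy, which forces $\ell\leq C$ for an explicit $C$ depending on $L$, $L'$, and $D$.

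The main obstacle is making this last quantitative comparison precise. A crude Lipschitz estimate alone only bounds the total backtracking of $\Psi\Phi(\gamma)$ \emph{linearly} in $|\gamma|$, not pointwise, so the proof must exploit the local folding structure of $\Phi$: an $\ell$-spike in $\Phi(\gamma)$ arises locally from two consecutive subpaths of $\gamma$ whose $\Phi$-images share an initial segment of length $\ell$ in $T_2$. Tracing this local configuration through $\Psi$ and using that $\Psi\Phi$ displaces each vertex by at most $D$ is what produces the uniform bound, and keeping the bookkeeping clean---so that the resulting constant depends only on $\CA_1$ and $\CA_2$ and not on $\gamma$---is the delicate step.
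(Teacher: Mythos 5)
The paper gives no proof of this lemma---it is simply quoted from Cooper---so the only question is whether your argument stands on its own. Your reduction is correct and is the standard modern one: the hypothesis places $w$ on the $T_1$--geodesic $[1,ww']$, the tree identity converts the cancellation defect into $2\,d_{T_2}(w,[1,ww'])$, and the lemma becomes a bounded backtracking property for the change-of-basis map $\Phi$. The genuine gap is exactly where you flag it: the final quantitative step is absent, and the mechanism you propose does not close it. The claim that ``the tracking tolerance $D$ can absorb only a bounded such discrepancy'' is the entire content of the lemma, not a tool for proving it. Note moreover that $\Psi\Phi$ is the identity on vertices of $T_1$ (both maps fix group elements), so your $D$ is $0$ and carries no information; the real issue is how much $\Psi$ can fold a long geodesic of $T_2$ onto a bounded subset of $T_1$, and bounding \emph{that} is itself a bounded-cancellation statement for $\Psi$. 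Arguing through it as stated is circular.

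Here is the finishing move your setup is missing; it needs no new machinery. Put $p=1$, $q=ww'$, $x=w$, let $y$ be the projection of $\Phi(x)$ to $[\Phi(p),\Phi(q)]$ in $T_2$, and let $\ell=d_{T_2}(\Phi(x),y)$ be the quantity to bound. The spike $\sigma=[y,\Phi(x)]$ lies in $[\Phi(p),\Phi(x)]\cap[\Phi(x),\Phi(q)]$, hence in both $\Phi([p,x])$ and $\Phi([x,q])$. Since $\Psi\Phi$ fixes every vertex and sends each edge of $T_1$ to a path of length at most $LL'$ between its endpoints, $\Psi\Phi([p,x])$ lies in the $LL'$--neighborhood of $[p,x]$, and likewise for $[x,q]$; in a tree the intersection of these two neighborhoods is contained in the ball of radius $2LL'$ about $x$. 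Therefore $\Psi(\sigma)$ is trapped in that ball. On the other hand $\Psi(\sigma)$ is connected and contains the vertices $\Psi(y)=y$ and $\Psi(\Phi(x))=x$ (take $y$ to be a vertex at the cost of an additive $1$), so it contains the $T_1$--geodesic $[y,x]$, whose length is $|y^{-1}x|_{\CA_1}\geq \tfrac{1}{L}|y^{-1}x|_{\CA_2}=\ell/L$ by the elementary two-sided Lipschitz comparison of word lengths. It is this lower bound on vertex distances---not the tracking constant---that breaks the circularity, and it yields $\ell\leq 2L^2L'+O(1)$, which is your $C$. (An alternative standard route avoids trees altogether: write the basis change as a finite composition of elementary Nielsen transformations, verify directly that each has bounded cancellation, and use that the cancellation constant behaves submultiplicatively under composition.)
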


\noindent We denote by $BCC(\CA_1,\CA_2)$ the bounded cancellation
constant; that is, the minimal constant $C$ satisfying the lemma for
$\CA_1$ and $\CA_2$.  In other words, if $ww'$ is a reduced word in
$\CA_1$, $w = \prod_{i=1}^m x_i$ and $w' = \prod_{i=1}^{m'}x'_i$ where
$x_i,x'_i \in \CA_2$, then for $C = BCC(\CA_1,\CA_2)$ the subwords
$x_1 \cdots x_{m-C-1}$ and $x'_{C+1}\cdots x'_{m'}$ appear as subwords
of $ww'$ when considered as a word in $\CA_2$.


Besides the free simplicial $F_k$--actions arising from marked graphs,
we will also consider free group actions on simplicial trees that
arise as Bass-Serre trees of splittings of $F_k$ over cyclic
subgroups. In general, for an $F_k$--tree $T$ the action when
restricted to a finitely generated subgroup $H$ is not minimal, i.e.,
there is a non-empty proper $H$--invariant subtree.  When $H$ does not
fix a point in $T$, we let $T^H$ denote the smallest non-empty proper
$H$--invariant subtree of $T$.  Such a subtree is characterized as
the %
union of the axes of all of the elements of $H$ that do not fix a %
point in $T$ \cite{Culler-Morgan}.  When $H$ fixes a subtree of $T$ %
pointwise, we let $T^H$ be any point of $T$ fixed by $H$.  We denote
by $\ell_T(x)$ the translation length of the element $x \in F_k$ in
the tree $T$.


\subsection{Dehn twist automorphisms}\label{ssc:dehn}

The simplest type of homeomorphism of a surface is a \emph{Dehn
  twist}.  These homeomorphisms are supported on an annular
neighborhood of a simple closed curve and are defined by cutting the
surface open along the curve and regluing after twisting one side by
$2\pi$.  Algebraically, a simple closed curve on a surface $\alpha
\subset S$ determines a splitting of the fundamental group $\pi_1(S)$
either as an amalgamated free product $\pi_1 (S_1) *_{\la \alpha \ra}
\pi_1(S_2)$, if $\alpha$ is separating ($S - \alpha = S_1 \,
{\scriptstyle \coprod} \, S_2$); or as an HNN-extension
$\pi_1(S')*_{\la \alpha \ra}$, if $\alpha$ is nonseparating ($S -
\alpha = S'$).

By analogy, we now define a \emph{Dehn twist automorphism}; see
\cite{Cohen-Lustig, Levitt, RS} for their use in various other
settings.  First consider the splitting of $F_k = A *_{\la c \ra} B$
which expresses $F_k$ as an amalgamation of two free groups over a
cyclic group. Define an automorphism $\delta$ of $F_k$ by:
\begin{align*}
  \forall a \in A \qquad & \delta(a) = a \\
  \forall b \in B \qquad & \delta(b) = cbc^{-1}.
\end{align*}
The automorphism $\delta$ acts trivially on homology and therefore
belongs to the subgroup $\IA_k$. Dehn twist automorphisms arising from
amalgamations over $\BZ$ should be considered analogous to a Dehn
twist around a separating simple closed curve on a surface.

We similarly obtain an automorphism $\delta$ from an HNN-extension of
the form
$$F_k = A *_\BZ = \la A, t  \ | \ t^{-1}a_0t = a_1 \ra$$
for $a_0, a_1 \in A$ by:
\begin{align*}
  \forall a \in A \qquad & \delta(a) = a \\
  & \delta(t) = a_0t.
\end{align*}
Automorphisms arising from HNN-extensions should be compared to a Dehn
twist around a nonseparating curve on a surface.


As there is no way to orient a free group, we cannot speak of ``left
Dehn twists'' or ``right Dehn twists'' as for surfaces.  Thus when we
say ``$\delta$ is a Dehn twist associated to the cyclic tree $T$,'' we
are referring to one of the above defined automorphisms for the given
edge group.


From Bass--Serre theory, a splitting of $F_k$ over $\BZ$ defines an
action of $F_k$ on a tree $T$, the {\it Bass--Serre tree} of the
splitting (see \cite{Bass} or \cite{Serre}). We will refer to such
$F_k$--trees as {\it cyclic}.  In a certain sense, cyclic trees for
$F_k$ correspond to simple closed curves on a surface.  In particular,
Dehn twist automorphisms associated to cyclic trees generate
an index two subgroup of $\Aut F_k$ (the subgroup which induces an
action of $\SL_k(\BZ)$ on homology). Note that if $\delta$ is the Dehn
twist automorphism associated to the cyclic tree $T$, then $\delta$
preserves the action of $F_k$ on $T$, i.e.,~there is an isometry
$h_\delta\co T \to T$ such %
that $\forall g \in F_k$ and $\forall x \in T$ we have $h_\delta(gx)
= %
\delta(g)h_\delta(x)$.  In particular, $\ell_T(\delta(g)) =
\ell_T(g)$ %
for all $g \in F_k$.  %


We are primarily interested in the {\it outer} automorphism group of
$F_k$, and so in the sequel a Dehn twist will refer to an element of
$\Out F_k$ which is induced by a Dehn twist automorphism in $\Aut
F_k$.

\subsection{Guirardel's core and free volume}\label{ssc:core}

Our strategy for proving Theorem \ref{th:A} requires some notion of
intersection number between a cyclic tree $T$ and a free factor or
cyclic subgroup $X \subset F_k$. To motivate our choice of
intersection number we re-examine intersections of curves on surfaces.

For two simple closed curves $\alpha,\beta \subset S$, the
intersection number $i(\alpha,\beta) = \ell_{T_\alpha}(\beta)$ where
$T_\alpha$ is the Bass-Serre tree dual to the splitting of $\pi_1 (S)$
over $\alpha$.  Hence our notion of intersection number between a
cyclic tree $T$ and a cyclic group $X = \la g \ra$ should be equal to
$\ell_T(g)$. Given a subsurface $S_0 \subset S$ and a simple closed
curve $\alpha \subset S$, there is an obvious way to define an
intersection number $i(\alpha,S_0)$ by considering the boundary
$\partial S_0$ and setting $i(\alpha,S_0) = i(\alpha,\partial S_0)$
(when $\partial S_0$ is not connected we take the sum over the
individual components).  This is exactly twice the number of arc
components in $\alpha \cap S_0$.

Using the \emph{Guirardel core}, one can associate a ``subsurface'' to
a free factor relative to a pair of cyclic trees $T_1$ and $T_2$.  As
the Guirardel core is not used in later sections, we will not give the
complete definition; for more details see \cite{Guirardel-number} or
\cite{BBC}.  For our purposes we only need to know that the core $\CC
\subset T_1 \times T_2$ is an $F_k$--invariant subset (with respect to
the diagonal action), $\CC/F_k$ is a finite complex equipped with two
tracks representing the splittings associated to the cyclic trees
$T_1$ and $T_2$.  Further, the projection maps $T_1 \gets T_1 \times
T_2 \to T_2$ descend to maps $T_1/F_k \gets \CC/F_k \to T_2/F_k$.  The
tracks in $\CC/F_k$ are the preimages of the midpoints of the edges
$T_1/F_k$ and $T_2/F_k$.

Now to get a ``subsurface'' for a free factor $X \subset F_k$, we
restrict the actions on $T_1$ and $T_2$ to the subgroup $X$ and
consider the core $\CC^X \subset T_1^X \times T_2^X$.  The natural
inclusions $T_1^X \to T_1$ and $T_2^X \to T_2$ induce an inclusion
$\CC^X \to \CC$ and a ``subsurface inclusion'' map $\CC^X/X \to \CC/X
\to \CC/F_k$.  The key point is that $\CC^X/X$ is a finite complex
representing $X$.  The picture one should keep in mind is the
inclusion of the core of the cover of a subsurface into the cover
associated to the subsurface, as well as its image in the surface under the
covering map.  See Figure \ref{fg:core}.

\begin{figure}[t]
  \centering
  \includegraphics{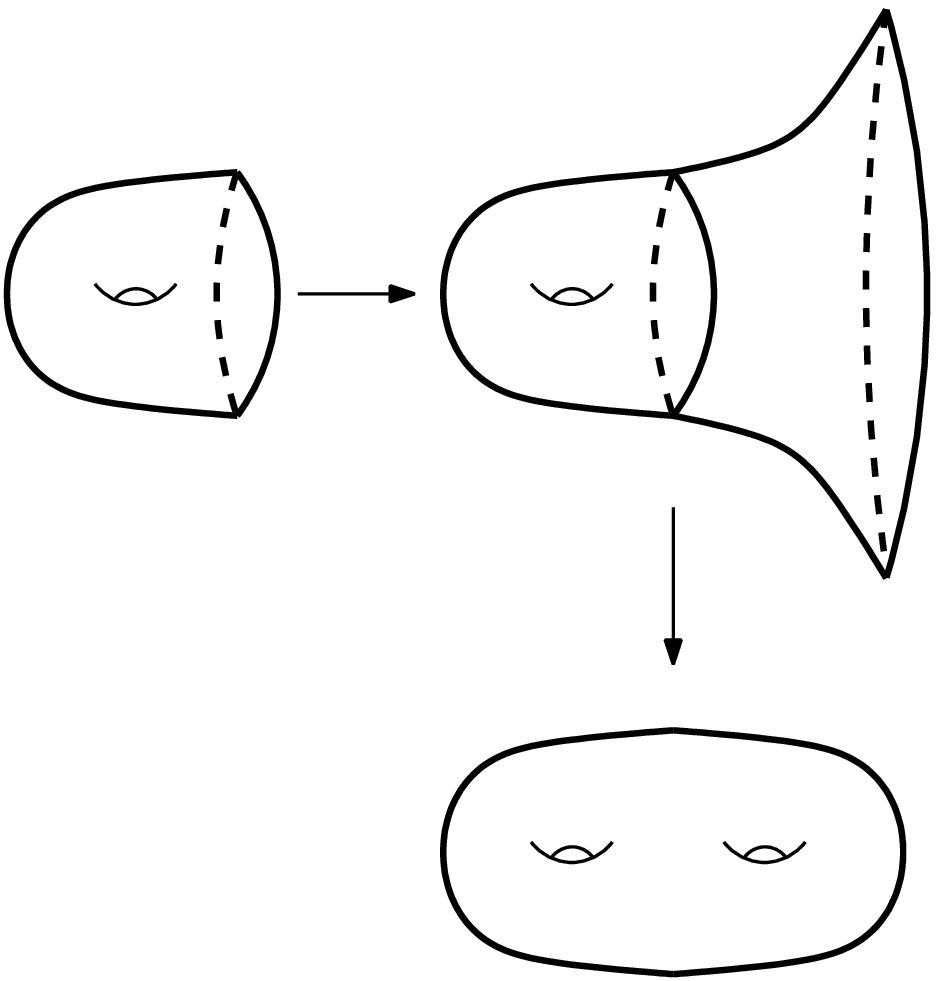}
  \caption{The map $\CC^X/X \to \CC/X \to \CC/F_k$.}\label{fg:core}
\end{figure}

Therefore, by analogy we should define the intersection number between
a cyclic tree $T_1$ and a free factor $X$ as the number of 
simply connected tracks associated to $T_1$ in $\CC^X/X$.  The map
$\CC^X/X \to T_1^X/X$ sends the simply connected tracks associated to
$T_1$ to edges of $T_1^X/X$ that have trivial edge stabilizer.  Thus
we define:

\begin{defi}[Free volume]\label{defi:volume}
  Suppose $X$ is a finitely generated free group that acts on a
  simplicial tree $T$ such that the stabilizer of an edge is either
  trivial or cyclic. The \emph{free volume $\vol_T(X)$} of $X$ with
  respect to $T$ is the number of edges in the graph of groups
  decomposition $T^X/X$ with trivial stabilizer.
\end{defi}
\noindent A form of this definition appears in \cite{Guirardel} in a more
general setting.  Notice that for a cyclic subgroup $X = \la g \ra$ we
have $\vol_T(X) = \ell_T(g) = \# $ edges of $T^X/X$, as we desired for
our notion of intersection.  If $T$ is equipped with a metric
preserved by the action of $X$, the free volume $\vol_T(X)$ is the sum
of lengths of the edges of $T^X/X$ with trivial edge stabilizer.
Clearly free volume only depends on the conjugacy class of the
subgroup.

\begin{lem}\label{lm:malnormal-freevolume}
  Let $T$ be a cyclic tree for $F_k$ and $X$ a subgroup of $F_k$.  If
  $X$ is malnormal, then there is at most one edge of $T^X/X$ that has
  a nontrivial stabilizer.  Hence, if $X$ is finitely generated, then:
  \[ 0 \leq \#\mbox{edges of } T^X/X - \vol_T(X) \leq 1. \]
\end{lem}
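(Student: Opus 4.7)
The plan is to use the single-edge-orbit structure of a cyclic splitting together with malnormality to show that any two edges of $T^X$ carrying a non-trivial $X$-stabilizer must lie in the same $X$-orbit; the bound on the number of edges with non-trivial stabilizer in $T^X/X$ will then give the second statement directly from the definition of free volume.

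First I would dispose of the trivial cases. If $X$ fixes a point of $T$ then $T^X$ is a point and both sides of the inequality vanish, and if the edge stabilizer in the splitting is trivial then every edge of $T^X/X$ contributes to $\vol_T(X)$ and there is nothing to check. Otherwise, because $T$ is a cyclic tree the quotient $T/F_k$ has a single edge orbit, so I fix a base edge $e_0$ whose $F_k$-stabilizer is $\langle c \rangle$ with $c\neq 1$; every edge of $T$ has the form $g\cdot e_0$ for some $g\in F_k$, with stabilizer $g\langle c\rangle g^{-1}$.

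The main step is a contradiction argument. Suppose $e_1=g_1\cdot e_0$ and $e_2=g_2\cdot e_0$ are two edges of $T^X$ lying in distinct $X$-orbits, each carrying a non-trivial $X$-stabilizer. Then I can choose non-trivial elements $x_i=g_i c^{n_i}g_i^{-1}\in X$ with $n_i\neq 0$. The key observation is that $x_1^{n_2}$ and $x_2^{n_1}$ are both conjugate (via $g_1$ and $g_2$ respectively) to $c^{n_1n_2}$, yielding the identity
\[ (g_2g_1^{-1})\,x_1^{n_2}\,(g_2g_1^{-1})^{-1} = x_2^{n_1}. \]
Because $F_k$ is torsion-free, $x_2^{n_1}$ is a non-trivial element of $X\cap (g_2g_1^{-1})X(g_2g_1^{-1})^{-1}$, so malnormality of $X$ forces $g_2g_1^{-1}\in X$. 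But this element carries $e_1$ to $e_2$, contradicting the assumption that their $X$-orbits are distinct. I expect the conjugation identity to be the main content of the argument; once it is in hand, malnormality closes the case in one line.

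For the second statement, finite generation of $X$ ensures that $T^X/X$ is a finite graph of groups, and the quantity $\#\mbox{edges of } T^X/X - \vol_T(X)$ equals by definition the number of edges of $T^X/X$ with non-trivial stabilizer, which the first part bounds above by $1$ (and below by $0$ trivially).
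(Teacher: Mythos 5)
Your proposal is correct and follows essentially the same argument as the paper: both reduce to the single edge orbit of a cyclic tree, take a suitable common power of $c$ lying in both conjugates of the edge stabilizer meeting $X$, and use malnormality to conclude that the conjugating element $g_2g_1^{-1}$ (the paper's $gh^{-1}$, phrased via double cosets $Xg\la c\ra$ rather than edge orbits) lies in $X$. The only cosmetic difference is that you produce the common power as $c^{n_1n_2}$ by raising $x_1$ and $x_2$ to the powers $n_2$ and $n_1$, while the paper simply asserts a common exponent $n$.
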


\begin{proof}
  Suppose that $T$ is the cyclic tree dual to the splitting of $F_k$
  over the cyclic subgroup $\la c \ra$.  For any subgroup $X \subseteq
  F_k$, edges of $T^X/X$ correspond to double cosets $Xg \la c \ra$
  and the corresponding subgroup is represented by the $X$--conjugacy
  class of $X \cap g\la c \ra g^{-1}$ \cite{Scott-Wall}.  Suppose both
  $X \cap g\la c \ra g^{-1}$ and $X \cap h\la c \ra h^{-1}$ are
  nontrivial for some $g,h \in \la c \ra$.  Thus for some $n \neq 0$,
  we have $gc^ng^{-1}, hc^nh^{-1} \in X$.  Therefore, $gc^ng^{-1} \in
  X \cap gh^{-1} X (gh^{-1})^{-1}$.  As $X$ is malnormal, this implies
  $gh^{-1} \in X$ and therefore the double cosets $Xg \la c \ra$ and
  $Xh \la c \ra$ are the same.  Hence, there is at most one edge of
  $T^X/X$ with a nontrivial stabilizer.
\end{proof}

\subsection{Filling cyclic trees}\label{ssc:filling}

Recall that two simple closed curves $\alpha$ and $\beta$ on a surface
are said to {\it fill} when the sum of their intersection numbers with
any arbitrary simple closed curve is positive. This naturally leads
one to consider the following definition.
\begin{defi}[Filling]\label{filling}
  We say that two cyclic trees $T_1$ and $T_2$ for $F_k$ \emph{fill}
  if
  \begin{equation*}\label{eq:F1}
    \vol_{T_1}(X) + \vol_{T_2}(X) > 0\tag{\textbf{F1}}
  \end{equation*}
  for every proper free factor or cyclic subgroup $X \subset F_k$.
\end{defi}

We thank Michael Handel for pointing out the follow lemma.  The lemma
says that the above condition (\textbf{F1}) is the minimal hypothesis
one could impose on cyclic trees in Theorem \ref{th:A}.

\begin{lem}\label{Handel}
  Suppose $T_1, \, T_2$ are cyclic trees for $F_k$ and $\delta_1, \,
  \delta_2$ are the respective Dehn twist (outer) automorphisms.  Then
  $T_1$ and $T_2$ satisfy {\rm (\textbf{F1})} if and only if no
  conjugacy class of a proper free factor or cyclic subgroup of $F_k$
  is invariant under both $\delta_1$ and $\delta_2$.
\end{lem}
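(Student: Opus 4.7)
The plan is to prove both directions of the ``if and only if'' by contraposition, using Theorem \ref{th:B} for one direction and a direct malnormality argument for the other.

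For \textbf{(F1)} $\Rightarrow$ no common invariant class, I assume some $[Y]$ (with $Y$ a proper free factor or cyclic subgroup) is $\delta_i$-invariant for both $i=1,2$, and produce an $X$ with $\vol_{T_1}(X)+\vol_{T_2}(X)=0$. Since $[Y]$ is $\delta_i$-invariant, each $\delta_i^n(Y)$ is conjugate to $Y$, and free volume is conjugation-invariant, so $\vol_{T_j}(\delta_i^n(Y))=\vol_{T_j}(Y)$ is constant in $n$. Plugging this into the lower bound of Theorem \ref{th:B} and letting $n\to\infty$ forces $\vol_{T_i}(Y)\cdot\ell_{T_j}(c_i)=0$ for $\{i,j\}=\{1,2\}$. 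A case analysis finishes: if $\vol_{T_1}(Y)=\vol_{T_2}(Y)=0$ take $X=Y$; otherwise some $\ell_{T_j}(c_i)=0$, and then $X=\langle c_i\rangle$ works, since it is elliptic in $T_i$ as an edge-group generator and elliptic in $T_j$ because $\ell_{T_j}(c_i)=0$ places it in a vertex group of $T_j$.

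For the converse, I take $Y=X$ and show that $\vol_{T_i}(X)=0$ for both $i$ implies $[X]$ is $\delta_i$-invariant for both. When $X=\langle x\rangle$ is cyclic, $\vol_{T_i}(X)=0$ forces $x$ to be elliptic in $T_i$, since otherwise the elliptic conjugates of $\langle c_i\rangle$ could not meet hyperbolic $\langle x\rangle$ non-trivially, making every edge of $T_i^X/X$ have trivial stabilizer and $\vol_{T_i}(X)=\ell_{T_i}(x)>0$. Then $x$ lies in a vertex group of $T_i$, whose conjugacy class is preserved by $\delta_i$. When $X$ is a proper free factor and non-elliptic in $T_i$, Lemma \ref{lm:malnormal-freevolume} combined with $\vol_{T_i}(X)=0$ forces $T_i^X/X$ to be a single edge with non-trivial cyclic stabilizer $C=X\cap g\langle c_i\rangle g^{-1}$; after conjugating $X$, assume $C=\langle c_i^m\rangle$ for some $m\geq 1$.

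The main obstacle, and crux of the proof, is this last subcase. The key trick is that $c_i^m\in X$ commutes with $c_i$, so $c_i^m\in X\cap c_i X c_i^{-1}$, and malnormality of the proper free factor $X$ upgrades this to $c_i\in X$ (whence $m=1$). Once $c_i\in X$, the Dehn twist $\delta_i$ restricted to $X$ becomes conjugation by $c_i$ on $X_2=X\cap B$ and the identity on $X_1=X\cap A$, so $\delta_i(X)\subseteq X$; the reverse inclusion follows from $X_2=c_i^{-1}(c_i X_2 c_i^{-1})c_i\subseteq\delta_i(X)$, using $c_i\in X_1\subseteq\delta_i(X)$. Hence $\delta_i(X)=X$, so $[X]$ is $\delta_i$-invariant. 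The HNN case is handled analogously, with the stable letter absorbing the twist.
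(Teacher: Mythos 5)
Your proof is correct and follows essentially the same strategy as the paper's: Theorem \ref{th:B} gives the direction (\textbf{F1}) $\Rightarrow$ no common invariant class, while the structural analysis of what $\vol_{T_i}(X)=0$ forces (either $X$ is elliptic, or $T_i^X/X$ is a single edge with nontrivial stabilizer, in which case $\delta_i$ restricts to an automorphism of a conjugate of $X$) gives the converse. Your malnormality trick showing $c_i\in X$ in the single-edge case is a nice elaboration of a step the paper asserts without detail, and your case analysis around $\ell_{T_j}(c_i)=0$, producing $\langle c_i\rangle$ as the witness to the failure of (\textbf{F1}), carefully handles a degenerate case hidden in the paper's one-line appeal to Theorem \ref{th:B}.
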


\begin{proof}
  Suppose $X$ is a cyclic subgroup or proper free factor of $F_k$ such
  that $\vol_{T_i}(X) = 0$ for $i = 1$ or 2.  Thus either $X$ is
  contained in a vertex stabilizer of $T_i$, or else $T_i^X/X$ is a
  single edge with a nontrivial stabilizer represented by an edge
  stabilizer of $T_i$.  In the former case, it is clear that
  $\delta_i$ fixes the conjugacy class of $X$; in the latter case,
  conjugating $\delta_i$ by an appropriate inner automorphism results
  in an automorphism whose action on $X$ agrees with the action of the
  Dehn twist automorphism of $T^X$.  Thus the conjugacy class of $X$
  is invariant under $\delta_i$.  Hence if $\vol_{T_1}(X) +
  \vol_{T_2}(X) = 0$, then the conjugacy class of $X$ is invariant
  under both $\delta_1$ and $\delta_2$.

  For the converse, it follows from Theorem \ref{th:B} that if $X$ is
  a cyclic subgroup or proper free factor such that $\vol_{T_i}(X)
  \neq 0$ for $i = 1$ or 2, then the conjugacy class of $X$ is not
  fixed by $\delta_i$.  Thus if $\vol_{T_1}(X) + \vol_{T_2}(X) > 0$,
  then the conjugacy class of $X$ is not invariant under both
  $\delta_1$ and $\delta_2$.
\end{proof}


Now recall that for surfaces we have the following equivalent
definitions of filling curves: (1) two curves fill if the complement of
their union is a union of topological disks, and; (2) two curves fill if
no proper subsurface contains the union of the curves.  Each of these 
characterizations leads to an alternative notion for two cyclic
trees $T_1$ and $T_2$ to fill:

\medskip
\noindent (\textbf{F2}) $F_k$ acts freely on the product $T_1 \times
T_2$, i.e., no element of $F_k$ fixes a point in each tree.


\noindent(\textbf{F3}) If $c_1$ fixes an edge of $T_1$ and $c_2$ fixes
an edge of $T_2$, then the subgroup $\la c_1, c_2 \ra$ is not
contained in a proper free factor of $F_k$.


\medskip The advantage of these alternate conditions is that
(\textbf{F2}) can be checked using Stallings' graph pull-backs
\cite{Stallings}, and (\textbf{F3}) can be checked using a version of
Whitehead's algorithm (see for instance \cite{Algom-Kfir} or
\cite{Martin}).  Obviously (\textbf{F1}) implies (\textbf{F2}), and
while some of the other relations are not clear, we will show that
(\textbf{F2}) + (\textbf{F3}) implies (\textbf{F1}). In a later
example we will see that (\textbf{F3}) is not implied by (\textbf{F1})
+ (\textbf{F2}).

\begin{prop}\label{prop:fill}
  Suppose $T_1$ and $T_2$ are cyclic trees satisfying {\rm
    (\textbf{F2})} and {\rm (\textbf{F3})}.  Then the trees $T_1$ and
  $T_2$ fill, i.e., $T_1$ and $T_2$ satisfy {\rm (\textbf{F1})}.
\end{prop}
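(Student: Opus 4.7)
The plan is to prove the contrapositive: if $\vol_{T_1}(X) + \vol_{T_2}(X) = 0$ for some non-trivial cyclic subgroup or proper free factor $X$ of $F_k$, then at least one of (\textbf{F2}) and (\textbf{F3}) must fail.

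First I handle the cyclic case, which is immediate. If $X = \la g \ra$ with $g \neq 1$, then $\vol_{T_i}(X) = \ell_{T_i}(g)$ as noted after Definition \ref{defi:volume}, so both volumes vanishing means $g$ is elliptic in each $T_i$ and hence fixes a point of $T_1 \times T_2$; this contradicts (\textbf{F2}).

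Next suppose $X$ is a proper free factor, so $F_k = X * Y$ with $Y \neq 1$ and a standard normal form argument in free products shows $X$ is malnormal in $F_k$. Lemma \ref{lm:malnormal-freevolume} then implies that for each $i$ either $T_i^X$ is a single vertex (so $X$ is elliptic in $T_i$) or $T_i^X/X$ consists of a single edge whose stabilizer is a non-trivial cyclic subgroup of $X$. In the second alternative, lifting the stabilized edge back to $T_i^X \subseteq T_i$ produces a non-trivial element $c'_i \in X$ of the form $g_i c_i^{n_i} g_i^{-1}$ which fixes an edge of $T_i$.

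The proof then splits into three subcases according to which alternative occurs for each tree. If $X$ is elliptic in both trees, then $X$ jointly fixes a point of $T_1 \times T_2$, so (\textbf{F2}) forces $X = 1$, a contradiction. If $X$ is elliptic in $T_1$ but $T_2^X/X$ has a stabilized edge (or vice-versa), then the non-trivial element $c'_2 \in X$ is elliptic in $T_1$ (because all of $X$ is) and also fixes an edge of $T_2$, again violating (\textbf{F2}). Finally, if each $T_i^X/X$ consists of a single stabilized edge, then $c'_1$ and $c'_2$ both lie in $X$, so $\la c'_1,c'_2 \ra \subseteq X$ exhibits an edge-stabilizer pair contained in a proper free factor, directly contradicting (\textbf{F3}). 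The conceptual point, and perhaps the only obstacle worth emphasizing, is that (\textbf{F2}) and (\textbf{F3}) are exactly complementary: (\textbf{F2}) rules out the subcases in which $X$ or an element of $X$ is elliptic in both trees, while (\textbf{F3}) handles the remaining subcase in which $X$ contains edge stabilizers of both $T_1$ and $T_2$; neither hypothesis suffices on its own.
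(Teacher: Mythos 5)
Your proposal is correct and follows essentially the same route as the paper: the cyclic case via (\textbf{F2}), then malnormality of a proper free factor together with Lemma \ref{lm:malnormal-freevolume} to reduce to the dichotomy of being elliptic or having a single stabilized edge in each $T_i$, with the mixed and doubly elliptic cases killed by (\textbf{F2}) and the doubly stabilized-edge case killed by (\textbf{F3}). The only difference is cosmetic — you phrase it as a contrapositive with an explicit three-way case split, whereas the paper argues directly.
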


\begin{proof}
  As (\textbf{F2}) implies that no $g \in F_k$ fixes a point in both
  $T_1$ and $T_2$, clearly $\vol_{T_1}(\la g \ra) + \vol_{T_2}(\la g
  \ra) > 0$ for any $g \in F_k$.


  Now suppose that $X$ is a proper free factor such that
  $\vol_{T_1}(X) + \vol_{T_2}(X) = 0$.  If $X$ fixes a vertex in $T_1$
  then $X$ must act freely on $T_2$ by (\textbf{F2}) and hence
  $\vol_{T_2}(X) > 0$.  Similarly $\vol_{T_1}(X) > 0$ if $X$ fixes a vertex in $T_2$.
  Therefore we can assume that $X$ does not fix a vertex in both $T_1$
  and $T_2$. As $X$ is a free factor and hence malnormal, by
  Lemma \ref{lm:malnormal-freevolume}, the only way $\vol_{T_1}(X) +
  \vol_{T_2}(X) = 0$ is if both quotient graphs of groups $T_1^X/X$
  and $T_2^X/X$ consist of a single edge with a nontrivial stabilizer.
  This contradicts (\textbf{F3}).  Therefore $\vol_{T_1}(X) +
  \vol_{T_2}(X) > 0$ and hence $T_1$ and $T_2$ fill.

\end{proof}

We can use this Proposition to produce filling cyclic trees.

\begin{ex}\label{ex:fill}
  Let $T$ be the cyclic tree for $F_3$ dual to the
  splitting $F_3 = \la a,c\ra*_{\la c \ra}\la b,c \ra$ and let $\phi
  \in \Out F_3$ be the element represented by $a \mapsto b
  \mapsto c \mapsto ab$.  We claim that the cyclic trees $T$
  and $T\phi^{-6}$ fill.  For reference we make note of $\phi^6$:
  \begin{align*}
    a & \mapsto abbc \\
    \phi^6\co \quad b & \mapsto bccab \\
    c & \mapsto cababbc
  \end{align*}
  Vertex stabilizers of $T\phi^{-6}$ are conjugates of $\la abbc ,
  cababbc \ra$ and $\la bccab , cababbc \ra$.  Using pull-back
  diagrams it is easy to see that the intersections of the vertex
  stabilizers are empty.  Hence the trees $T$ and $T\phi^{-6}$ satisfy
  (\textbf{F2}) and therefore $\vol_{T}(\la g \ra) +
  \vol_{T\phi^{-6}}(\la g \ra) > 0$ for any $g \in F_k$.

  Unfortunately, the trees $T$ and $T\phi^{-6}$ do not satisfy
  (\textbf{F3}) as $\langle c, cababbc \rangle$ is a proper free
  factor of $F_3$ ($F_3 = \la c,cababbc\ra *\la ab\ra$).  We can show
  that essentially this is the only such proper free factor, and that
  it satisfies (\textbf{F1}).

  Suppose that $X$ is a proper free factor that contains $\la c_1,c_2
  \ra$ where $c_1$ fixes an edge of $T$ and $c_2$ fixes an edge of
  $T\phi^{-6}$. Then by replacing $X$ by a conjugate, we can assume
  that $X = \la c, g\phi^6(c)g^{-1} \ra$ for some $g \in F_k$.
  However, it is easy to see that $\vol_T(X) \geq 3$ for this subgroup
  as the translation length of $\phi^6(c)$ in $T$ is 4.  Other proper
  free factors satisfy (\textbf{F1}) by the argument in Proposition
  \ref{prop:fill}.  Hence $\vol_T(X) + \vol_{T\phi^{-6}}(X) > 0$ for
  any proper free factor and therefore $T$ and $T\phi^{-6}$ fill.

\end{ex}

To build filling cyclic trees in arbitrarily high rank we introduce
two simplicial complexes naturally associated to $F_k$; these
complexes appear in \cite{KL_Complex}. They are analogous to the curve
complex for the mapping class group, i.e.,~the simplicial complex
whose vertices are isotopy classes of simple closed curves and
simplicies correspond to disjoint representatives.

The \emph{dominance graph} $\calD$ is the graph whose vertices
correspond to conjugacy classes of proper free factors of $F_k$, where
two such $[A]$ and $[B]$ are connected by an edge if there are
representatives, $A' \in [A]$, $B' \in [B]$, with $A' \subset B'$ or
$B' \subset A'$.  This is the 1--skeleton of the free factor complex
considered by Hatcher and Vogtmann \cite{HV}.

We also consider the \emph{cyclic splitting graph} $\CZ'$, although
what we actually require is the following variant of the like-named
complex appearing in \cite{KL_Complex}: Vertices correspond to very
small simplicial trees for $F_k$, i.e., simplicial trees $T$ such that
edge stabilizers are either trivial or maximal cyclic in adjacent
vertex stabilizer, and the stabilizer of any tripod is trivial.
Notice that cyclic trees where the edge stabilizers are generated by
primitive elements (i.e.,~can be extended to a basis) are vertices in
this graph. Two very small simplicial trees $T_1$ and $T_2$ are
adjoined by an edge in $\CZ'$ if there is a $g \in F_k$ such that
$\ell_{T_1}(g) = \ell_{T_2}(g) = 0$, i.e., $g$ fixes a point in both
$T_1$ and $T_2$.

The following proposition should now be compared to the fact that two
curves fill if and only if their distance in the curve complex is at
least 3.

\begin{prop}\label{prop:dominance}
  Suppose that $T_1$ and $T_2$ are cyclic trees with primitive cyclic
  edge generators $c_1$ and $c_2$, respectively, such that
  $d_{\CZ'}(T_1,T_2) \geq 2$ and $d_{\calD}([c_1],[c_2]) \geq 3$.
  Then the cyclic trees $T_1$ and $T_2$ fill.
\end{prop}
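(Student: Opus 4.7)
The plan is to apply Proposition \ref{prop:fill}, so it suffices to verify both \textbf{(F2)} and \textbf{(F3)} for $T_1$ and $T_2$.

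Condition \textbf{(F2)} is immediate from the hypothesis $d_{\CZ'}(T_1,T_2) \geq 2$: if some nontrivial $g \in F_k$ had a fixed point in both trees, then $\ell_{T_1}(g) = \ell_{T_2}(g) = 0$, and by the very definition of $\CZ'$ this would place an edge between $T_1$ and $T_2$, contradicting the distance assumption.

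The substantive step is \textbf{(F3)}. Suppose $a$ fixes an edge of $T_1$ and $b$ fixes an edge of $T_2$, and for contradiction assume $\la a, b \ra$ lies in some proper free factor $X \subset F_k$. Since every edge stabilizer of $T_i$ is a conjugate of $\la c_i \ra$, I can write $a = g c_1^m g^{-1}$ and $b = h c_2^n h^{-1}$ with $m, n \neq 0$, so that $c_1^m \in g^{-1} X g$ and $c_2^n \in h^{-1} X h$, each of which is a proper free factor of $F_k$. I would then invoke the classical fact that a free factor $Y$ of a free group is closed under taking roots --- if $y^k \in Y$ with $k \neq 0$, then $y \in Y$; this follows from the uniqueness of the maximal root of a nontrivial element in a free group together with the fact that a free factor of a free group is itself free --- to conclude that $c_1 \in g^{-1} X g$ and $c_2 \in h^{-1} X h$. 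Since $c_1$ and $c_2$ are primitive, the subgroups $\la c_1 \ra$ and $\la c_2 \ra$ are themselves proper free factors and thus vertices of $\calD$. The containments $\la c_1 \ra \subseteq g^{-1} X g$ and $\la c_2 \ra \subseteq h^{-1} X h$ then give edges from $[c_1]$ to $[X]$ and from $[c_2]$ to $[X]$ in $\calD$, so by the triangle inequality $d_{\calD}([c_1],[c_2]) \leq 2$, contradicting $d_{\calD}([c_1],[c_2]) \geq 3$.

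With both \textbf{(F2)} and \textbf{(F3)} verified, Proposition \ref{prop:fill} delivers the conclusion. The only nonformal ingredient is the root-closure property of free factors; I expect this to be the main technical point to pin down carefully, though it is a classical consequence of normal forms in free products. Everything else in the argument amounts to unwinding the definitions of $\CZ'$ and $\calD$ and combining them with Proposition \ref{prop:fill}.
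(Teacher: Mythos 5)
Your proof is correct and follows the same route as the paper: deduce \textbf{(F2)} from $d_{\CZ'}(T_1,T_2)\geq 2$, deduce \textbf{(F3)} from $d_{\calD}([c_1],[c_2])\geq 3$ via the length-two path $[c_1]$--$[X]$--$[c_2]$, and invoke Proposition \ref{prop:fill}. The only difference is that you spell out the reduction from conjugates of \emph{powers} of $c_1,c_2$ to conjugates of $c_1,c_2$ themselves using root-closure of free factors, a (correct, classical) detail the paper's terser proof leaves implicit.
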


\begin{proof}
  Since $d_{\CZ'}(T_1,T_2) \geq 2$ there is no element $g \in F_k$
  such that $\ell_{T_1}(g) = \ell_{T_2}(g) = 0$, hence the trees $T_1$
  and $T_2$ satisfy (\textbf{F2}).  Further since
  $d_{\calD}([c_1],[c_2]) \geq 3$ there is no proper free factor $X
  \subset F_k$ or conjugates $c'_1 \in [c_1]$ and $c'_2 \in [c_2]$
  such that $\la c'_1, c'_2 \ra \subseteq X$, hence the trees $T_1$
  and $T_2$ satisfy (\textbf{F3}).  Therefore by Proposition
  \ref{prop:fill} the cyclic trees $T_1$ and $T_2$ fill.
\end{proof}

\begin{rmk}\label{rm:fill}
  For $k \geq 3$, Kapovich and Lustig have shown that for a hyperbolic
  fully irreducible element $\phi \in \Out F_k$ and any two vertices
  $[A],[B] \in \calD$ that $d_\calD([A],\phi^n([B]))$ goes to infinity
  as $n \to \pm \infty$ \cite{KL_Complex}.  Similarly for two vertices
  $T_1, T_2 \in \CZ'$.  Hence Proposition \ref{prop:dominance} shows
  for any cyclic tree $T$ whose edge stabilizers are generated by
  conjugates of a primitive element and any hyperbolic fully
  irreducible element $\phi \in \Out F_k$, that given sufficiently
  large $n$, the pair $T$ and $T\phi^n$ fill.
\end{rmk}

\section{Computing free volume}\label{counting}

In this section, we will explain how we use Stallings' folding to find
the free volume of finitely generated subgroups of $F_k$ relative to
cyclic trees. This will be central to our proof of Theorem \ref{th:B}.

\subsection{\texorpdfstring{Cyclic splittings of $F_k$}{Cyclic
    splittings of F_k}}\label{ssc:splittings}
We begin by recalling two theorems which describe how any cyclic
splitting of $F_k$ must arise. For the case of amalgamations, we have
the following theorem of Shenitzer:
\begin{thm}[Shenitzer \cite{Shenitzer}]\label{splitting1}
  Suppose that $F_k$ is expressed as an amalgamated free product
  $F_k=A *_{\la c \ra} B$. Then one of the following symmetric
  alternatives holds:
  \begin{enumerate}
  \item $A *_{\la c \ra} B = A *_{\la c \ra} \la c, B_0 \ra $ with
    $F_k = A * B_0$; or
  \item $A *_{\la c \ra} B = \la A_0, c \ra *_{\la c \ra} B$ with $F_k
    = A_0 * B$. \qed
  \end{enumerate}
\end{thm}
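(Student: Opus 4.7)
The plan is to rewrite the amalgamation as a one-relator quotient of a free group and then apply two classical tools: a Magnus-type characterization of which relators give $F_{n-1}$ as the quotient, followed by a Whitehead-graph analysis in the free product $A*B$. Writing $F_k = (A*B)/\la\la c_A c_B^{-1}\ra\ra$, where $c_A,c_B$ denote the two embedded copies of $c$, additivity of Euler characteristic across the graph of groups yields
\[ 1 - k \;=\; \chi(F_k) \;=\; (1 - r_A) + (1 - r_B) - 0, \]
so $r_A + r_B = k+1$ and $A*B \cong F_{k+1}$. Thus $F_k$ appears as the quotient of $F_{k+1}$ by the normal closure of the single element $w := c_A c_B^{-1}$.

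The core of the argument is to show that $c$ is primitive in $A$ or in $B$. First I would invoke the classical fact that if $F_n/\la\la w\ra\ra$ is free of rank $n-1$ then $w$ must be primitive in $F_n$ (a consequence of comparing deficiencies together with Magnus' Freiheitssatz); applied here, this shows $w$ is primitive in $A*B$. Next, choose bases $\{a_i\}$ of $A$ and $\{b_j\}$ of $B$, so that $\{a_i\} \cup \{b_j\}$ is a basis of $A*B$, and consider the Whitehead graph of $w$ relative to this concatenation basis. Because $w$ is the concatenation of a pure $A$-word and the inverse of a pure $B$-word, all but at most two edges of the Whitehead graph lie among the $A$-letters or among the $B$-letters. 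Whitehead's theorem supplies a length-reducing Whitehead automorphism, and because of the very limited $A$-$B$ connectivity, it restricts to Nielsen-type moves either on $c_A$ inside $A$ or on $c_B$ inside $B$. Iterating, one of $c_A, c_B$ reduces to a single basis letter of its factor, exhibiting $c$ as primitive on that side.

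With primitivity in hand the conclusion is immediate. Say $c$ is primitive in $B$; then $B = \la c\ra * B_0$ with $\rank(B_0) = r_B - 1$, and substituting gives
\[ A *_{\la c \ra} B \;=\; A *_{\la c \ra} (\la c \ra * B_0) \;=\; A * B_0, \]
a free product of rank $r_A + r_B - 1 = k$, which therefore coincides with $F_k$. This is alternative (1); the symmetric case, with $c$ primitive in $A$, yields (2).

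The main obstacle is the upgrade from primitivity of $w$ in $A*B$ to primitivity of one of $c_A, c_B$ in its respective factor. The Whitehead-graph combinatorics is elementary in spirit but needs care: one must verify that each length-reducing Whitehead automorphism of $A*B$ can be chosen so as to preserve the free-product decomposition, so that the iterative reduction really witnesses the primitivity of one side rather than some mixed element. The Euler characteristic count and the final assembly into a free product are purely formal; it is this combinatorial step that carries the content.
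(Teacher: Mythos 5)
This theorem is not proved in the paper: it is quoted from Shenitzer with a citation, and the surrounding discussion points to Nielsen-method and folding proofs (Stallings, Bestvina--Feighn, Louder) for this circle of results. So your proposal has to stand on its own. Its outer layers do: $A$ and $B$ are finitely generated free groups, the Euler characteristic count gives $\rank A+\rank B=k+1$, the amalgam is presented as $(A*B)/\la\la c_Ac_B^{-1}\ra\ra$, and the classical fact that a one-relator quotient of $F_{n}$ which is free of rank $n-1$ has a primitive relator (the real proof runs through Grushko, Hopficity of free groups, and Magnus's theorem on elements with the same normal closure, rather than just ``deficiency plus Freiheitssatz,'' but it is a citable result) shows $w=c_Ac_B^{-1}$ is primitive in $A*B\cong F_{k+1}$. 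The final assembly from ``$c_A$ or $c_B$ is primitive in its factor'' to alternatives (1)/(2) is also correct.

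The gap is exactly the step you flag, and it cannot be repaired in the form you state it: it is false that a length-reducing Whitehead automorphism for $w$ can be chosen to preserve the decomposition $A*B$. Take $A=\la a_1,a_2\ra$, $c_A=[a_1,a_2]$, $B=\la b\ra$, $c_B=b$. Then $w=[a_1,a_2]b^{-1}$ is primitive in $F_3$ (adjoin $a_1,a_2$ to get a generating set of size $3$) and is cyclically reduced of length $5$; but any automorphism of the form $\sigma_A*\sigma_B$ sends $w$ to $\sigma_A([a_1,a_2])\,\sigma_B(b)^{-1}$, whose cyclic length is at least $4+1=5$ because $[a_1,a_2]$ has minimal length in its $\Aut(A)$--orbit (every automorphism of $F_2$ carries it to a conjugate of itself or its inverse) and there is no cancellation between the two factors. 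Hence the length-reducing move guaranteed by Whitehead's theorem --- for instance $b\mapsto ba_2^{-1}$, which shortens $w$ to $a_1a_2a_1^{-1}b^{-1}$ --- must mix the factors. Consequently the iteration you describe never runs, and the statement you actually need (if $c_A$ and $c_B$ are both of minimal length at least $2$ in their respective automorphic orbits, then $c_Ac_B^{-1}$ is not primitive in $A*B$) is essentially equivalent to Shenitzer's lemma itself; the observation that only two Whitehead-graph edges join the $A$-letters to the $B$-letters is a reasonable starting point, but no argument has been extracted from it. To complete a proof along any standard line you would instead run Nielsen's method on generating sets adapted to the amalgam (Shenitzer's and Swarup's approach) or factor an equivariant map from a free simplicial $F_k$--tree to the Bass--Serre tree of the splitting into folds (Stallings).
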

\noindent Interchanging $A \leftrightarrow B$ we will always assume
the first alternative holds.  Consequently, a Dehn twist automorphism 
$\delta$ resulting from a splitting of $F_k$ as an amalgamation over
$\BZ$ as above always arises as follows: There is a free splitting
$F_k = A * B_0$ and an element $c \in A$ such that:
\begin{align*}
  \forall a \in A \qquad & \delta(a) = a \\
  \forall b \in B_0 \qquad & \delta(b) = cbc^{-1}.
\end{align*}
A basis for $F_k$ \emph{relative to the cyclic tree} dual to $A*_{\la
  c \ra} B$ consists of the union of a basis for $A$ and a basis for
$B_0$.

There is an analogous theorem for HNN-extensions due to Swarup
\cite{Swarup}.
\begin{thm}[Swarup \cite{Swarup}]\label{th:HNN}
  Suppose that $F_k$ is expressed as an $HNN$-extension $F_k=A
  *_\BZ$. Express $F$ in terms of $A$ and an extra generator $t$, such
  that the edge group $\la c \ra = A \cap tAt^{-1}$. Then $A$ has a
  free product structure $A = A_1 * A_2$, in such a way that one of
  the following symmetric alternatives holds:
\begin{enumerate}
\item $\la c \ra \subset A_1$, and there exists $a \in A$ such that
  $t^{-1}\la c \ra t = a^{-1} A_2 a$; or
\item $t^{-1}\la c \ra t \subset A_1$, and there exists $a \in A$ such
  that $\la c \ra = a^{-1} A_2 a$. \qed
\end{enumerate}
\end{thm}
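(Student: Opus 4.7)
The plan is to parallel the proof of Shenitzer's theorem (Theorem~\ref{splitting1}), using the freeness of $F_k$ to extract a free factor decomposition of the vertex group $A$ compatible with the HNN-structure. Set $H_1 = \la c \ra$ and $H_2 = t^{-1}\la c \ra t$; both are cyclic subgroups of $A$, and the HNN presentation reads $F_k = \la A, t \mid t^{-1} h t = \phi(h),\ h \in H_1 \ra$ for an isomorphism $\phi \co H_1 \to H_2$. The goal is to produce a free splitting $A = A_1 * A_2$ in which one of $H_1, H_2$ is conjugate in $A$ to $A_2$, and the other is contained in $A_1$.

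I would first realize the splitting topologically. Let $Y$ be a wedge of circles with $\pi_1(Y) = A$, and attach a cylinder $S^1 \times [0,1]$ to $Y$ by immersing its two boundary circles along loops representing generators of $H_1$ and $H_2$; the resulting $2$--complex $X$ satisfies $\pi_1(X) = F_k$. Since $F_k$ is free, $X$ is homotopy equivalent to a graph, which is a rigid constraint: the cylinder must be ``absorbable'' into $Y$ by a sequence of elementary collapses and expansions. Such an absorption is possible only if at least one of the two boundary loops can, after an appropriate Nielsen rearrangement of a basis of $A$, be homotoped onto a single petal of $Y$ --- equivalently, when one of $H_1$ or $H_2$ is conjugate in $A$ to a cyclic free factor of $A$. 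Say $H_2 = a^{-1} A_2 a$ for a cyclic free factor $A_2 \subset A$ and some $a \in A$; starting from any complementary splitting $A = A_2 * A_1'$ and applying Nielsen moves adjusted to the image of $H_1$, one then produces $A = A_1 * A_2$ with $H_1 \subset A_1$, giving alternative~(1). Symmetric reasoning gives alternative~(2).

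The main obstacle is making the topological absorption step rigorous, as it bundles together several Tietze- and Whitehead-type manipulations on the $2$--complex $X$. A cleaner algebraic route --- presumably Swarup's original approach in \cite{Swarup} --- would combine the Bass--Serre tree of the HNN-splitting with a free splitting of $F_k$ coming from a chosen basis, carry out Stallings-style folds between the two trees, and observe that the constraint $\pi_1 = F_k$ forces all but the HNN-edge in the resulting refined graph-of-groups decomposition to have trivial stabilizer. The remaining $\la c \ra$--stabilized edge then extracts the claimed free-product structure on $A$ directly.
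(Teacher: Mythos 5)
This is a statement the paper itself does not prove: Theorem \ref{th:HNN} is quoted from Swarup, stated with an immediate \qed, and the authors defer to \cite{Bestvina-Feighn,Louder,Stallings2} for proofs. So the only question is whether your sketch would stand on its own, and it does not: the central step is asserted rather than argued. You build the $2$--complex $X$ by attaching a cylinder to a graph $Y$ along loops representing generators of $H_1 = \la c \ra$ and $H_2 = t^{-1}\la c \ra t$, and then claim that because $\pi_1(X) \cong F_k$ is free, the cylinder ``must be absorbable'' and that this is ``possible only if'' one of the two boundary loops is conjugate in $A$ to a cyclic free factor. That implication is precisely the content of Swarup's theorem. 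Freeness of $\pi_1(X)$ gives you, a priori, only that $X$ is homotopy equivalent to a graph; it does not tell you that the equivalence can be realized by collapses respecting the cylinder, nor that the attaching circles interact with some basis of $A$ in the structured way you describe. (Compare: a $2$--complex built from a non-geometric presentation of a free group has free $\pi_1$ but no visible collapse of its $2$--cells.) Making this step honest requires an actual mechanism --- Nielsen/peak reduction on normal forms (Swarup's original route, following Shenitzer), Whitehead's cut-vertex lemma, or Stallings folds from a free $F_k$--tree onto the Bass--Serre tree --- and you supply only the name of the third option, not the argument.

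There is a second, smaller gap even granting the first step. Knowing that $H_2 = a^{-1}A_2 a$ for a cyclic free factor $A_2 \subset A$ does not by itself produce a complementary factor $A_1$ with $A = A_1 * A_2$ and $H_1 \subset A_1$; an arbitrary element of $A$ need not lie in any complement of a given cyclic free factor. Your phrase ``applying Nielsen moves adjusted to the image of $H_1$'' names the desired conclusion rather than deriving it. In the fold-based proof this comes out of the same analysis that produces $A_2$ (the last fold in the factorization exhibits both the free factor and the complement containing the other edge group simultaneously), which is another reason to carry out that argument rather than the topological absorption heuristic. In short: the strategy in your final paragraph is the right one, but as written the proposal establishes nothing beyond the setup.
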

\noindent For alternative viewpoints and proofs see
\cite{Bestvina-Feighn,Louder,Stallings2}.  For our purposes we record
the following restatement of Theorem \ref{th:HNN}.

\begin{cor}\label{HNN-basis}\label{splitting2}
  Suppose that $F_k$ is expressed as an $HNN$-extension $F_k=A *_\BZ$.
  Then $F_k$ has a free product decomposition $F_k = A_0 * \la t_0
  \ra$ and $A$ has a free product decomposition $A = A_0 * \la
  t_0^{-1} c t_0 \ra$ for some $c \in A_0$.  Either $t = t_0a$ (case
  {\rm (1)} in Theorem \ref{th:HNN}), or $t = a^{-1}t_0^{-1}$ (case
  {\rm (2)}).\qed
\end{cor}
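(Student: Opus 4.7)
The plan is to derive this corollary as a convenient reformulation of Theorem \ref{th:HNN}: handle alternative (1) directly, and reduce alternative (2) to alternative (1) by replacing the stable letter $t$ with its inverse, which exchanges the two edge generators of the HNN. Two preliminary observations set up the argument. First, a standard Euler characteristic count for the graph of groups gives $\chi(F_k) = \chi(A) - \chi(\BZ) = \chi(A)$, hence $\rank(A) = k$. Second, in alternative (1) the relation $t^{-1}\la c\ra t = a^{-1} A_2 a$ forces the free factor $A_2$ to be infinite cyclic, say $A_2 = \la d\ra$, so that $\rank(A_1) = k - 1$. After replacing $d$ by $d^{-1}$ if necessary, we may assume $t^{-1} c t = a^{-1} d a$.

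For alternative (1), set $A_0 := A_1$ and $t_0 := t a^{-1}$, so that $t = t_0 a$ and
\[
t_0^{-1} c t_0 \;=\; a t^{-1} c t a^{-1} \;=\; a(a^{-1} d a)a^{-1} \;=\; d.
\]
This gives $A = A_0 * \la t_0^{-1} c t_0\ra$ with $c \in A_0$. Since $d = t_0^{-1} c t_0 \in \la A_0, t_0\ra$ we have $a \in A = A_0 * \la d\ra \subseteq \la A_0, t_0\ra$, whence $F_k = \la A, t\ra = \la A_0, t_0\ra$. As $A_0$ is free of rank $k - 1$, the surjection from the abstract free product $A_0 * \la s\ra$ (on a new formal letter $s$) onto $F_k$ sending $s \mapsto t_0$ is a surjection between two free groups of rank $k$, and is therefore an isomorphism by the Hopfian property of finitely generated free groups. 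Hence $F_k = A_0 * \la t_0\ra$.

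For alternative (2), rewrite the same HNN with stable letter $\bar t := t^{-1}$ and edge generator $c' := t^{-1} c t$; the defining relation becomes $\bar t^{-1} c' \bar t = c$, with $c' \in A_1$ and $\bar t^{-1}\la c'\ra \bar t = \la c\ra = a^{-1} A_2 a$, which are precisely the hypotheses of alternative (1) applied to the pair $(\bar t, c')$. That case produces $\bar t_0 := \bar t a^{-1} = t^{-1} a^{-1}$ with $F_k = A_1 * \la \bar t_0\ra$ and $A = A_1 * \la \bar t_0^{-1} c' \bar t_0\ra$. Setting $t_0 := \bar t_0$ and $A_0 := A_1$ then yields $t = a^{-1} t_0^{-1}$ together with the desired decompositions (the corollary's ``$c$'' is here $c' \in A_0$). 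The only nontrivial ingredient throughout is the free-product upgrade of $F_k = \la A_0, t_0\ra$, supplied by the rank count and the Hopfian property; everything else is algebraic bookkeeping between the two symmetric alternatives.
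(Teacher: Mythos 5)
Your proof is correct and follows exactly the route the paper intends: the corollary is stated with a \qed and no written argument, being treated as an immediate reformulation of Swarup's Theorem \ref{th:HNN}, and your derivation (rank count via Euler characteristic, the substitution $t_0 = ta^{-1}$, the Hopfian upgrade of $F_k = \la A_0, t_0\ra$ to $F_k = A_0 * \la t_0\ra$, and the reduction of alternative (2) to alternative (1) by inverting the stable letter) supplies precisely the omitted bookkeeping.
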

\noindent Again, by interchanging $A \leftrightarrow tAt^{-1}$ we will
always assume that first alternative holds.  Thus any Dehn twist
automorphism $\delta$ resulting from an HNN-extension over $\BZ$ as
above always arises as follows: There is a free splitting $F_k = A_0
* \la t_0 \ra$ and an element $c \in A_0$ such that:
\begin{align*}
  \forall a \in A_0 \qquad & \delta(a) = a \\
  & \delta(t_0) = ct_0.
\end{align*}
A basis for $F_k$ \emph{relative to the cyclic tree} dual to $A*_\BZ$
consists of the union of a basis for $A_0$ and $t_0$.

\begin{rmk}\label{remark:verysmall}
  We will usually restrict our attention to very small cyclic trees.
  This does not result in any loss of generality as any Dehn twist
  automorphism is a power of Dehn twist automorphism associated to a
  very small cyclic tree.  Indeed, suppose $T$ is a cyclic tree dual
  to an amalgamated free product that is not very small.  By the
  above, this tree is dual to a splitting $A *_{\la c^n \ra} \la
  c^n,B_0\ra$ where $c \in A$ is an indivisible element.  The
  associated Dehn twist is the $n^{\rm th}$ power of the Dehn twist
  associated to the very small cyclic tree dual to the splitting $A
  *_{\la c \ra} \la c, B_0 \ra$.
\end{rmk}
\subsection{Free volume for an amalgamated free
  product}\label{ssc:amalgamated}


Here we explain how to compute free volume for a finitely generated
subgroup $H$ with respect to a tree dual to an amalgamated product by
associating a tree with a {\it free} $F_k$--action, using Shenitzer's
Theorem.  We consider a splitting of $F_k$ as an amalgamated free
product of the form:
\[ F_k = A *_{\langle c \rangle} \langle c, B_0 \rangle \] with $F_k =
A * B_0$ and $c \in A$ with $c$ indivisible. Let $\CA = \{a_1, \ldots,
a_j \}$ be a basis for $A$, and $\CB_0 = \{ b_{j+1}, \ldots b_k \}$ a
basis for $B_0$.  We assume that $c$ is cyclically reduced with
respect to $\CA$.  Thus $\CA \cup \CB_0$ is a %
basis for $F_k$ relative to $T$.  Let $\Lambda = \Lambda_{\CA \cup %
  \CB_0}$ be the $k$--rose labeled by the basis $\CA \cup \CB_0$.
Then let $\Lambda_\CA$ be the $j$--rose, labeled by the elements of
$\CA$, let $\Lambda_{\CB_0}$ be the $(k-j)$--rose, labeled by the elements
of $\CB_0$, and let $\Lambda_\CB$ be the $(k-j+1)$--rose resulting from
wedging an additional circle corresponding to the element $c$ to
$\Lambda_{\CB_0}$.  There are natural inclusions $\iota_\CA\co
\Lambda_\CA \to \Lambda$ and $\iota_{\CB_0} \co \Lambda_{\CB_0} \to
\Lambda$ and an immersion $\iota_\CB \co \Lambda_\CB \to \Lambda$.  We
say that an edge of $\Lambda$ corresponding to an element of $\CA$ is
an $\CA$--edge and an edge of $\Lambda$ corresponding to an element of
$\CB_0$ is a $\CB_0$--edge.

Let $\tilde{\Lambda}_\CA$ and $\tilde{\Lambda}_\CB$ be the universal
covers of $\Lambda_\CA$ and $\Lambda_\CB$ respectively.  The covering
maps naturally define immersions $\tilde{\iota}_\CA \co
\tilde{\Lambda}_\CA \to \Lambda$ and $\tilde{\iota}_\CB \co
\tilde{\Lambda}_\CB \to \Lambda$. Let $\CV(\CA)$ denote the set of
subtrees of $\tilde{\Lambda}$ which are lifts of $\tilde{\iota}_\CA
\co \tilde{\Lambda}_\CA \to \Lambda$ to $\tilde{\Lambda}$, and let
$\CV(\CB)$ denote the set of subtrees of $\tilde{\Lambda}$ which are
lifts of $\tilde{\iota}_\CB \co \tilde{\Lambda}_\CB \to \Lambda$ to
$\tilde{\Lambda}$.  In other words, $\CV(\CA)$ is the set of minimal      %
subtrees in $\tilde{\Lambda}$ for conjugates of $A$, and similarly for     %
$\CV(\CB)$.

Notice that the natural simplicial structure (i.e.,~vertices and edges)
the trees in $\CV(\CB)$ inherit from $\tilde{\Lambda}_\CB$ is
different from the induced simplicial structure on the trees when
considered as subtrees of $\tilde{\Lambda}$, unless $c$ is primitive
and $c \in \CA$.  When we speak of ``intersection'' of these subtrees,
we will usually refer to the natural inherited simplicial structure.
To make this easier on the reader, we include the following
definition.

\begin{defi}\label{def:ocap}
  Suppose $X$ is a subtree of $\tilde{\Lambda}$ (considered with the
  induced simplicial structure) and $L$ is a subtree in $\CV(\CB)$
  (considered with the inherited simplicial structure).  We define
  the $\ocap$--intersection of $X$ and $L$, denoted $X \ocap L$ by the
  following:
  \[x \in X \ocap L \Leftrightarrow \begin{array}{c} x \in e \subset X
    \cap L, \mbox{ where $e$ is a union of edges in $X$} \\ \mbox{and
      a union of edges of $L$.}\end{array} \]
\end{defi}


There is an $F_k$--equivariant one-to-one correspondence between the
set $\CV(\CA) \cup \CV(\CB)$ and the set of vertices of $T$, defined
by common stabilizer subgroups in $F_k$. Two vertices in $T$ are
adjacent if and only if the $\ocap$--intersection of their
corresponding subtrees in $\CV(\CA)$ and $\CV(\CB)$ is nonempty and
hence an infinite line.  Thus we have a description of $T$ in terms of
intersection of subtrees of $\tilde{\Lambda}$ associated to $A$ and
$B$.


Recall that $H$ is a finitely generated subgroup of $F_k$, and that 
$\tilde{\Lambda}^H$ denotes the smallest $H$--invariant subtree of
$\tilde{\Lambda}$.  We seek to describe $T^H/H$ (and hence compute
$\vol_T(H)$) in terms of $\tilde{\Lambda}^H/H$ with additional data
encoding the edge types.  A subtree is \emph{trivial} if it is a
single vertex, otherwise it is \emph{nontrivial}.  We feature two sets
of nontrivial subtrees of $\tilde{\Lambda}^H$:

\begin{enumerate}
\item Nontrivial subtrees of the form $K^H = \tilde{\Lambda}^H \cap K$
  for $K \in \CV(\CA)$ which are not properly contained within a
  subtree $\tilde{\Lambda}^H \ocap L$ for $L \in \CV(\CB)$.  We denote
  by $\CV^H(\CA)$ the set of all such subtrees $K^H$.
\item Nontrivial subtrees of the form $L^H = \tilde{\Lambda}^H \ocap L$
  for $L \in \CV(\CB)$ which are not properly contained within a
  subtree $\tilde{\Lambda}^H \cap K$ for $K \in \CV(\CA)$. We denote
  by $\CV^H(\CB)$ the set of all such subtrees $L^H$.
\end{enumerate}


Notice that $\CV^H(\CA)$ is empty if and only if $H$ is contained in a
conjugate of $B$ so that $H$ fixes a vertex of $T$.  Similarly,
$\CV^H(\CB)$ is empty if and only if $H$ is contained a conjugate of
$A$.  Thus both $\CV^H(\CA)$ and $\CV^H(\CB)$ are empty if and only if
$H$ is contained in a conjugate of $\la c \ra$.  In either of these
cases the minimal tree $T^H$ is a single point and $\vol_T(H) = 0$.

For each subtree $K^H \in \CV^H(\CA)$ we have a corresponding vertex
$v_K \in T$ (the vertex corresponding to $K \in \CV(\CA)$, where $K
\cap \tilde{\Lambda}^H = K^H$); denote the set of such vertices by
$V^H(\CA)$. Likewise, for each component of $L^H \in \CV^H(\CB)$ there
is a corresponding vertex $v_L \in T$; denote the set of such vertices
by $V^H(\CB)$. Note that this correspondence between components of
$\CV^H(\CA) \cup \CV^H(\CB)$ and vertices of $T$ is $H$--equivariant
as $\tilde{\Lambda}^H$ is $H$--equivariant.


Let $\CE^H(\CA,\CB)$ denote the set of nonempty (but possibly trivial,
i.e., a vertex) subtrees $K^H \cap L^H$ for $K^H \in \CV^H(\CA)$ and
$L^H \in \CV^H(\CB)$. To each such subtree $K^H \cap L^H$ in
$\CE^H(\CA,\CB)$ is associated a (geometric) edge $e_{K}^{L}$ in $T$,
namely the edge with vertices $v_K$ and $v_{L}$.  Indeed as the          %
corresponding subtrees $K \in \CV(\CA)$ and $L \in \CV(\CB)$ intersect  %
(necessarily along an axis for some conjugate of $c$), the corresponding      %
conjugates of $A$ and $B$ fix a common edge of $T$; this edge is        %
$e_K^L$.  We denote the set of such edges by $E^H(\CA,\CB)$. The        %
correspondence between $\CE^H(\CA,\CB)$ and $E^H(\CA,\CB)$ is of        
course $H$--equivariant.                                                 


\begin{ex}\label{ex:mintree}
  It is perhaps enlightening at this point to consider an example of the sets
  and subtrees described above.  Let $T$ be the cyclic tree dual to the
  splitting of $F_3 = \la a_1,a_2,b \ra$ as $\la a_1, a_2 \ra *_{\la
    a_1a_2 \ra} \la a_1a_2,b \ra$ and $H = \la a_1b \ra$; the subtree 
  $\tilde{\Lambda}^H$ is the axis of $a_1b$. Then for $K \in
  \CV(\CA)$, the subtrees $\tilde{\Lambda}^H \cap K$ are the edges of
  $\tilde{\Lambda}^H$ that are labeled by $a_1$.  Similarly, the
  subtrees $\tilde{\Lambda}^H \cap L$ for $L \in \CV(\CB)$ are the
  segments consisting of edges labeled by $ba_1$.  With the inherited
  simplicial structure on a subtree $L$, only the edges on the axis
  of $a_1b$ labeled by $b$ are actually edges of $L$.  Thus, for $L
  \in \CV(\CB)$, the $\ocap$--intersections, $\tilde{\Lambda}^H \ocap
  L$ are the edges of $\tilde{\Lambda}^H$ labeled $b$.  It is easy to
  see that $\CE^H(\CA,\CB)$ is the set of vertices of
  $\tilde{\Lambda}^H$.  See Figure \ref{fig:mintree} for corresponding
  edges $E^H(\CA,\CB)$.  Notice that these edges form the axis of $H$ in
  $T$, which is $T^H$.
  \begin{figure}[t]
    \psfrag{a}{$a_1$}
    \psfrag{b}{$b$}
    \psfrag{A}{$\la a_1,a_2 \ra$}
    \psfrag{B}{$a_1b\la a_1,a_2 \ra(a_1b)^{-1}$}
    \psfrag{C}{$a_1\la a_1a_2, b \ra a_1^{-1}$}
    \psfrag{D}{$a_1ba_1\la a_1a_2, b \ra (a_1ba_1)^{-1}$}
    \psfrag{u}{$u$}
    \psfrag{v}{$v$}
    \psfrag{e}{$e$}
    \psfrag{f}{$a_1e$}
    \psfrag{g}{$a_1be$}
    \psfrag{h}{$a_1ba_1e$}
    \psfrag{w}{$a_1v$}
    \psfrag{y}{$a_1bu$}
    \psfrag{x}{$a_1ba_1v$}
    \centering
    \includegraphics{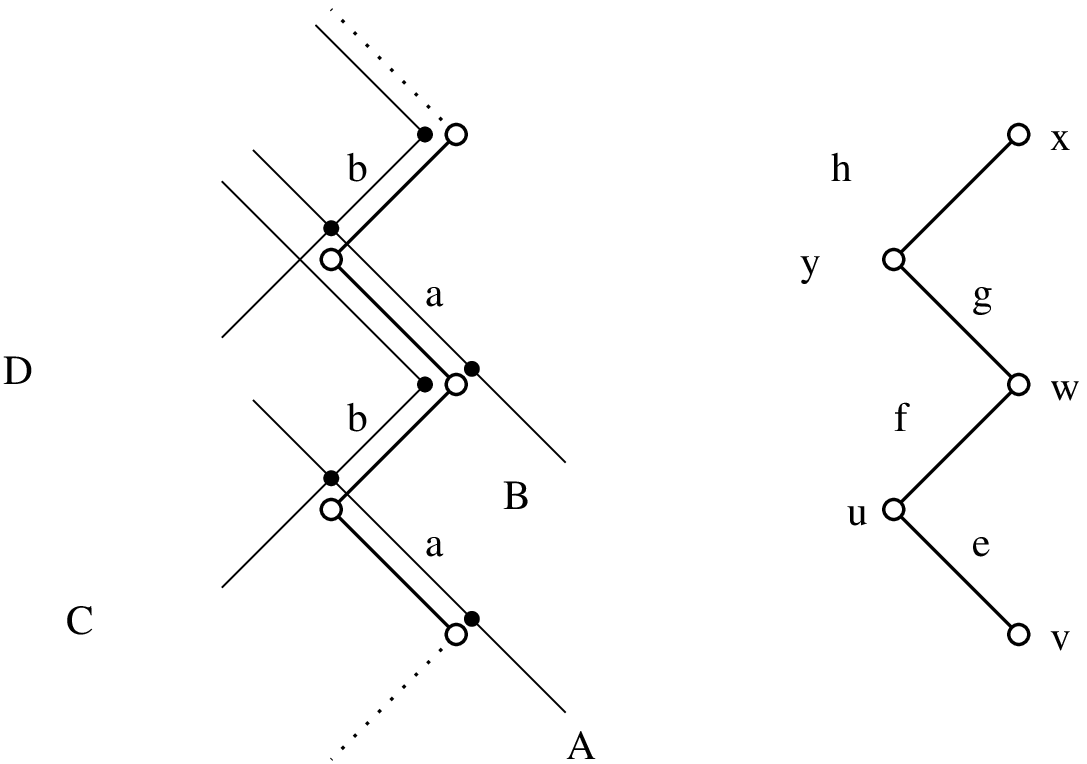}
    \caption{The edges $E^H(\CA,\CB)$ for Example \ref{ex:mintree}.
      On the left is the axis $\tilde{\Lambda}^H$ with the subtrees in
      $\CV(\CA)$ and $\CV(\CB)$ schematically draw in.  The inherited
      simplicial structure on these subtrees is shown.  On the right
      is the picture in $T$.  The vertices $u$ and $v$ are stabilized
      by $\la a_1,a_2 \ra$ and $\la a_1a_2, b \ra$, respectively, and
      the edge $e$ is stabilized by $\la a_1a_2 \ra$.}
    \label{fig:mintree}
  \end{figure}
\end{ex}


\begin{lem}\label{lm:minimaltree}
  Suppose $H$ does not fix a point in $T$.  Then the subcomplex in $T$
  consisting of vertices $V^H(\CA) \cup V^H(\CB)$ and edges
  $E^H(\CA,\CB)$ is precisely the smallest $H$--invariant subtree $T^H$
  of $T$.
\end{lem}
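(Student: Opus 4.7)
The plan is to prove the identity by establishing that the subcomplex $T_0 \subseteq T$ (with vertices $V^H(\CA) \cup V^H(\CB)$ and edges $E^H(\CA, \CB)$) is a connected $H$-invariant subtree of $T$ that equals $T^H$. First I would verify $H$-invariance and connectedness: invariance is immediate from the $H$-equivariance of the correspondences $\CV^H(\CA) \cup \CV^H(\CB) \leftrightarrow V^H(\CA) \cup V^H(\CB)$ and $\CE^H(\CA,\CB) \leftrightarrow E^H(\CA,\CB)$, themselves inherited from the $F_k$-equivariance of the basic correspondence $\CV(\CA) \cup \CV(\CB) \leftrightarrow \text{Vert}(T)$ together with the $H$-invariance of $\tilde{\Lambda}^H$. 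For connectedness, given two vertices $v_1, v_2 \in V^H(\CA) \cup V^H(\CB)$ corresponding to subtrees $W_1, W_2 \subseteq \tilde{\Lambda}^H$, I would trace the geodesic in $\tilde{\Lambda}^H$ between points of $W_1$ and $W_2$, decompose it into maximal segments lying in single subtrees of $\CV^H(\CA) \cup \CV^H(\CB)$ (absorbing any $K^H$ properly contained in some $L^H$ into the latter), and observe that the transition points give elements of $K^H \cap L^H$, yielding an edge-path in $T_0$.

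Next I would show $T^H \subseteq T_0$. Since $H$ does not fix a point in $T$, $T^H$ is the union of $T$-axes of elements of $H$ that are hyperbolic in $T$. For such an $h$, the $\tilde{\Lambda}$-axis $\gamma_h' \subseteq \tilde{\Lambda}^H$ passes through a sequence of subtrees $W_i \in \CV(\CA) \cup \CV(\CB)$ whose corresponding vertices form the $T$-axis of $h$; since $\gamma_h'$ enters and exits each $W_i$ through the axes of two \emph{distinct} conjugates of $c$, the intersection $W_i \cap \tilde{\Lambda}^H$ extends beyond the axis of any single conjugate of $c$, so it cannot be properly contained in any single opposite-type subtree. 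Hence the corresponding vertex lies in $V^H(\CA) \cup V^H(\CB)$, and each edge of the $T$-axis (corresponding to the axis of some conjugate of $c$ that $\gamma_h'$ traverses) lies in $E^H(\CA, \CB)$.

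Finally I would establish $T_0 \subseteq T^H$ contrapositively. Suppose $v_K \in V^H(\CA) \setminus T^H$, and let $\ell^*$ be the $c$-axis in $\tilde{\Lambda}$ corresponding to the first edge of the geodesic in $T$ from $v_K$ to $T^H$. Any element $h \in H$ whose $\tilde{\Lambda}$-axis extended into $K$ past $\ell^*$ and exited through a different conjugate-$c$ axis would be hyperbolic in $T$ with $v_K$ on its $T$-axis, contradicting $v_K \notin T^H$; the remaining elements of $H$ whose axes visit $K$ must stabilize $v_K$ and thus act on $K$ alone, so the subtree $K^H = K \cap \tilde{\Lambda}^H$ is forced to be confined to $\ell^*$ and to consist of full $c$-paths. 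This places $K^H$ properly inside $\tilde{\Lambda}^H \ocap L^*$ (where $L^* \in \CV(\CB)$ corresponds to the other endpoint of the first edge), contradicting $v_K \in V^H(\CA)$. \emph{The main obstacle} is carefully analyzing the vertex-stabilizer contributions to $\tilde{\Lambda}^H$ inside $K$ and extracting the precise $\ocap$-containment $K^H \subsetneq \tilde{\Lambda}^H \ocap L^*$ from the geometric picture.
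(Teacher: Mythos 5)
Your first two paragraphs are fine and track the paper's argument: showing the subcomplex is a connected $H$--invariant subtree already gives $T^H\subseteq T_0$ (your second paragraph re-derives this containment axis-by-axis, which is harmless but unnecessary once connectedness and invariance are in hand). The crux is the reverse containment $T_0\subseteq T^H$, and here the paper argues differently: for each edge $e_K^L\in E^H(\CA,\CB)$ it directly produces an element $h\in H$, hyperbolic in $T$, whose $T$--axis contains $e$, by choosing $h$ so that its $\tilde\Lambda$--axis crosses $K\cap L$ and is ``complicated enough'' not to be elliptic in $T$; minimality then follows because $T^H$ is the union of axes.

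Your contradiction argument for this direction has a genuine gap at the step ``the remaining elements of $H$ whose axes visit $K$ must stabilize $v_K$ and thus act on $K$ alone, so $K^H$ is forced to be confined to $\ell^*$.'' This does not follow. First, an element $h\in H$ that stabilizes $v_K$ has its $\tilde\Lambda$--axis contained in $K$, but that axis can sit anywhere in $K$ (e.g.\ if $H\cap\mathrm{Stab}(v_K)$ contains an element not conjugate into $\la c\ra$), so it contributes to $K^H$ portions nowhere near $\ell^*$; to exclude this you would have to show that a nontrivial elliptic element at $v_K$, together with the hyperbolic elements of $H$, already forces $v_K\in T^H$ --- which is essentially the statement you are trying to prove. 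Second, axes that merely graze $K$ (entering and leaving through the \emph{same} conjugate-$c$ line) may do so along several distinct lines $\ell_1\subset L_1$, $\ell_2\subset L_2$; since $K^H=K\cap\tilde\Lambda^H$ is connected, it then contains a path in $K$ joining them and is not contained in any single $\tilde\Lambda^H\ocap L$, so no contradiction with $v_K\in V^H(\CA)$ arises. (There is also a smaller unjustified step earlier: an axis crossing $K$ through two distinct conjugate-$c$ lines gives $v_K$ on the $T$--itinerary from $y$ to $hy$, but that itinerary can backtrack, so placing $v_K$ on the $T$--\emph{axis} of $h$ needs an argument; the paper's ``complicated enough'' hedge is addressing exactly this.) The missing idea is the paper's: rather than constraining $K^H$ when $v_K\notin T^H$, exhibit for each subtree $K\cap L\in\CE^H(\CA,\CB)$ an explicit $h\in H$ whose $\tilde\Lambda$--axis contains a segment crossing $K\cap L$ from $K\smallsetminus L$ into $L\smallsetminus K$ and which is hyperbolic in $T$; its $T$--axis then contains $e_K^L$.
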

\begin{proof}
  Suppose that $v_K$ and $v_L$ are two vertices in $V^H(\CA) \cup
  V^H(\CB)$. Then there exists an arc in $\tilde{\Lambda}^H$ which
  connects the component $K$ to the component $L$.  This arc passes
  through a sequence of subtrees $K = K_0, K_1, \ldots, K_n = L \in
  \CV^H(\CA) \cup \CV^H(\CB)$.  As the arc transitions from $K_{i-1}$
  to $K_i$, the intersections $K_{i-1} \cap K_i$ are non-empty and
  therefore correspond to edges $e_i = e_{K_{i-1}}^{K_i} \in
  E^H(\CA,\CB)$.  By construction the edge path $e_1,\ldots,e_n$
  connects $v_K$ to $v_L$.  Therefore the subcomplex consisting of
  vertices $V^H(\CA) \cup V^H(\CB)$ and edges $E^H(\CA,\CB)$ is
  connected and hence an $H$--invariant subtree of $T$.


  To prove minimality, note that every edge $e$ in $E^H(\CA,\CB)$ lies   %
  on the axis of some element of $H$ acting on $T$.  Indeed, suppose
  $e$ corresponds to $K \cap L \in \CE^H(\CA,\CB)$ with $K \in
  \CV^H(\CA)$ and $L \in \CV^H(\CB)$.  Since $K$ is not contained in
  $L$ there is an $x \in K - (K \cap L)$.  Let $h \in H$ be such that
  the edge path from $x$ to $hx$ is contained in the axis of $h$ and
  the edge path from $x$ to $hx$ contains $K \cap L$.  Such an element
  exists since the action of $H$ on $\tilde{\Lambda}^H$ is minimal.
  Moreover, by choosing the axis of $h$ to be ``complicated enough''     %
  with respect to $\CV(\CA)$ and $\CV(\CB)$, i.e., the axis intersects   %
  several of these subtrees, we can assume that $h$ does not fix a       %
  point of $T$.  Notice that the axis of $h$ in $T$ contains $e$.  It    %
  is well-known that when a group acts on a tree without a global
  fixed point, the minimal tree is precisely the union of the axes of
  its elements \cite{Culler-Morgan}.
\end{proof}


We introduce some terminology which will be useful for classifying the
subtrees in $\CV^H(\CA)$, $\CV^H(\CB)$, and $\CE^H(\CA,\CB)$. Fix an
immersion $\gamma\co [0,1] \to \Lambda$ that factors through $[0,1]
\to S^1 \to \Lambda$, where the first map identifies $0$ and $1$, and
the second map represents the conjugacy class of $c \in F_k \cong
\pi_1(\Lambda)$.  We let $\Lambda^H$ be the graph
$\tilde{\Lambda}^H/H$. A {\it chain} is an ordered set $\alpha =
(\gamma_1, \ldots,\gamma_\ell)$, where $\gamma_i$ is a lift of
$\gamma$ to $\Lambda^H$, with $\gamma_i(1) = \gamma_{i+1}(0)$
for $i = 1, \ldots, \ell-1$.  The {\it vertices} of a chain are
$\mathcal{V}(\alpha) = \gamma_1(0) \cup \bigcup_{i=1}^\ell
\gamma_{i}(1)$.  Notice that vertices of a chain are vertices of
$\Lambda^H$, but vertices contained in the image of $\alpha$ are not
necessarily vertices of the chain.  We often identify a chain with its
image in $\Lambda^H$.

We refer to an edge in $\Lambda^H$ as an $\CA$--edge or $\CB_0$--edge
according to its image in $\Lambda$.  A chain $\alpha$ is {\it
  nonessential} if
\begin{enumerate}
\item any edge adjacent to $\alpha$ is a $\CB_0$--edge which is
  adjacent to $\alpha$ at a vertex in $\CV(\alpha)$; or
\item the only edges adjacent to $\alpha$ are $\CA$--edges.
\end{enumerate}

Otherwise we say $\alpha$ is {\it essential}.  In other words,
$\alpha$ is essential only if it is adjacent to a $\CB_0$--edge at a
vertex of $\Lambda^H$ that is not a vertex of $\alpha$, or it is
adjacent to both an $\CA$--edge and a $\CB_0$--edge.  The edges in a
nonessential chain adjacent only to $\CB_0$--edges are considered
$\CB_0$--edges.  The set of all maximal essential chains in $\Lambda^H$
is denoted by $\alpha(\Lambda^H)$.

We say a vertex is {\it essential} if it is not a chain vertex of any
essential chain and it is adjacent to both an $\CA$--edge and a
$\CB_0$--edge.  The set of all essential vertices we denote by
$\CV_{ess}(\Lambda^H)$.

\begin{lem}\label{lm:chains}
  With the notation above, the image of a subtree in $\CE^H(\CA,\CB)$
  in $\Lambda^H$ is either a maximal essential chain or an essential
  vertex.  Conversely, every maximal essential chain or essential
  vertex is the image of some subtree in $\CE^H(\CA,\CB)$.
\end{lem}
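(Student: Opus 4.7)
The plan is to analyze the combinatorial structure of the subtree $K^H \cap L^H \subset \tilde{\Lambda}^H$, identify its projection to $\Lambda^H$, and match the essentiality/maximality conditions against the ``not properly contained'' conditions in the definitions of $\CV^H(\CA)$ and $\CV^H(\CB)$. To begin, $K$ contains only $\CA$-edges and the $\CA$-content of $L$ (in its inherited simplicial structure) consists precisely of its $c$-edges, namely the $\gamma$-paths of $\tilde{\Lambda}$. Hence every edge of $K^H \cap L^H$ lies inside some $\gamma$-path which is a $c$-edge of $L$ contained in $\tilde{\Lambda}^H$. Since $K^H \cap L^H$ is a connected subtree, it is either a single vertex $v$ or a chain $\tilde{\alpha} = (\tilde{\gamma}_1, \ldots, \tilde{\gamma}_\ell)$ of $\gamma$-paths with consecutive shared endpoints; its projection to $\Lambda^H$ is therefore either a vertex $\bar v$ or a chain $\bar{\alpha} = (\gamma_1, \ldots, \gamma_\ell)$.

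For the forward direction, essentiality is by contradiction. If $\bar{\alpha}$ were nonessential of type (1), so that every adjacent edge is a $\CB_0$-edge attached at a chain vertex, then no $\CA$-edge is adjacent to $\tilde{\alpha}$ in $\tilde{\Lambda}^H$, forcing $K^H = \tilde{\alpha}$, while the adjacent $\CB_0$-edges lift into $L^H$, so $K^H \subsetneq L^H$ contradicts $K^H \in \CV^H(\CA)$. Type (2) is handled symmetrically, giving $L^H \subsetneq K^H$ and contradicting $L^H \in \CV^H(\CB)$. For maximality, if $\bar{\alpha}$ admits an extension by some lift $\gamma_{\ell+1}$, then its lift $\tilde{\gamma}_{\ell+1}$ shares a vertex with $\tilde{\gamma}_\ell$ and lies in the same $\CA$-component $K$; the key geometric point is that the $\gamma$-paths of $K \cap L$ are organized along a single $c$-translation axis in $K$, so $\tilde{\gamma}_{\ell+1}$ is automatically a $c$-edge of $L$, and hence $\tilde{\gamma}_{\ell+1} \subset K^H \cap L^H$, contradicting that $\tilde{\alpha}$ was the entire intersection. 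The vertex case is analogous: $K^H$ contributes an $\CA$-edge at $\bar v$ and $L^H$ a $\CB_0$-edge (any $\gamma$-path of $L$ at $v$ would otherwise lie in $K^H \cap L^H$), and any essential chain through $\bar v$ would likewise produce a $\gamma$-path of $L$ in $K^H \cap L^H$, again contradicting $K^H \cap L^H = \{v\}$.

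For the converse, given a maximal essential chain $\bar{\alpha}$ I would lift to $\tilde{\alpha} \subset \tilde{\Lambda}^H$, let $K$ be the $\CA$-component containing $\tilde{\alpha}$, and take $L$ to be the unique element of $\CV(\CB)$ whose $c$-edges realize the $\gamma$-paths of $\tilde{\alpha}$; essentiality prevents either $K^H$ or $L^H$ from being properly contained in the other, placing both in their respective classes, and maximality forces $K^H \cap L^H = \tilde{\alpha}$. The essential vertex case is analogous. The main obstacle I anticipate is the maximality half of the forward direction: one must show that a chain extension in $\Lambda^H$ corresponds to an extension within the \emph{same} $L$ rather than branching into a different $L'$. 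This relies on the fact that the $\gamma$-paths shared by $K$ and $L$ lie on a single translation axis, namely the axis of a generator of the cyclic intersection of the corresponding conjugates of $A$ and $B$. Carefully distinguishing chain vertices from interior vertices of $\gamma$-lifts, and tracking how edges adjacent to $\bar{\alpha}$ in $\Lambda^H$ lift back to $\tilde{\Lambda}^H$, will be the most delicate bookkeeping in the argument.
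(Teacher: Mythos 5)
Your argument is correct and follows essentially the same route as the paper's: the forward direction analyzes $K^H \cap L^H$ directly as either a point or a segment of the axis of a conjugate of $c$, deriving essentiality from the non-containment conditions defining $\CV^H(\CA)$ and $\CV^H(\CB)$, while the converse reconstructs $K$ and $L$ as the ambient $\CA$--component and the element of $\CV(\CB)$ carrying that axis. The only substantive difference is that you verify maximality of the image chain explicitly (via uniqueness of lifts of $\gamma$, which forces any extension to continue along the same axis and hence to lie in $K^H \cap L^H$), a point the paper dismisses as clear.
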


\begin{proof}
  Let $K \in \CV^H(\CA)$ and $L \in \CV^H(\CB)$ and suppose $K \cap L$
  is nonempty.  First suppose $K \cap L$ is a vertex.  Hence its image
  in $\Lambda^H$ is adjacent to both an $\CA$--edge and a $\CB_0$--edge.
  Furthermore it is not the vertex of a chain as such a chain would
  lift to a segment in $\tilde{\Lambda}^H$ adjacent to this vertex and
  contained in both $K$ and $L$, contradicting the fact that their
  intersection is a single vertex.  Hence the image of $K \cap L$ is an
  essential vertex.

  Now suppose $K \cap L$ is a nondegenerate segment.  Its image in
  $\Lambda^H$ is clearly a maximal chain.  Furthermore, as $K$ is not
  contained in $L$, and $L$ is not contained in $K$, the chain is
  essential.
  
  For the converse, we show how to find the subtrees $K$ and $L$.  Let $\Lambda_A$ be the complement in $\Lambda^H$ of the union of the interiors of the $\CB_0$--edges.  There is exactly one component of $\Lambda_A$ that
  contains the given maximal essential chain or essential vertex.  Let
  $K$ be a lift of this component to $\tilde{\Lambda}^H$, and notice
  that $K \in \CV^H(\CA)$.  Similarly, let $\Lambda_{B_0}$ be the
  complement in $\Lambda^H$ of the union of the interior of the
  $\CA$--edges.  Attach each chain in $\alpha(\Lambda^H)$ to
  $\Lambda_{B_0}$ along its vertices to the appropriate component and
  call the resulting set of components $\Lambda_B$.  Again, there is
  exactly one component of $\Lambda_B$ that contains the given maximal
  essential chain or vertex.  Let $L$ be a lift of this component to
  $\tilde{\Lambda}^H$ that intersects $K$, and notice that $L \in
  \CV^H(\CB)$.  The given maximal essential chain or essential vertex
  is the image of $K \cap L$.
\end{proof}

By construction, two edges $e_{K_1}^{L_1}$ and $e_{K_2}^{L_2}$ in
$T^H$ are identified by $h \in H$ if and only if $h^{\pm 1}(K_1 \cap
L_1) = K_2 \cap L_2$.  Hence edges of $T^H/H$ correspond to maximal
essential chains and essential vertices in $\Lambda^H$.  Furthermore,
as the action of $\tilde{\Lambda}^H$ is free, an edge $e_K^L$ has a
nontrivial edge stabilizer if and only if $K \cap L$ is an infinite
line, in which case the corresponding essential chain in $\Lambda^H$
has two vertices that are identified. We say that an essential chain
$\alpha$ in $\Lambda^H$ is {\it simply connected} if the elements of
$\CV(\alpha)$ are all distinct.  As edge stabilizers are maximal
cyclic subgroups, an edge in $E^H(\CA,\CB)$ has nontrivial stabilizer
if and only if it corresponds to a non-simply connected chain. The
subset of simply connected maximal essential chains is denoted
$\alpha_{sc}(\Lambda^H)$.


We have now proved: 
\begin{thm}\label{prop:amalgam-free-volume}
  Suppose that $T$ is a very small cyclic tree dual to a splitting
  $F_k = A *_{\la c \ra} \la c, B_0 \ra$ and $H$ is a finitely
  generated subgroup of $F_k$.  Let $\CA \cup \CB_0$ be a basis
  relative to $T$, and define $\Lambda = \Lambda_{\CA \cup \CB_0}$ and
  $\Lambda^H = \tilde{\Lambda}^H/H$.  Then:
  \[ \vol_T(H) = \#|\alpha_{sc}(\Lambda^H)|+
  \#|\CV_{ess}(\Lambda^H)|.\]\qed
\end{thm}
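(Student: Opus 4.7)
The plan is to assemble the theorem directly from the lemmas already proved in this subsection, so almost no new work is required -- the result is essentially a bookkeeping statement.

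First I would invoke Lemma \ref{lm:minimaltree} to identify $T^H$ with the subcomplex of $T$ whose vertex set is $V^H(\CA)\cup V^H(\CB)$ and whose edge set is $E^H(\CA,\CB)$. Since $H$ acts freely on $\tilde{\Lambda}$, the quotient $T^H/H$ inherits a cell structure whose edges are in bijection with the $H$--orbits of subtrees in $\CE^H(\CA,\CB)$. The projection $\tilde{\Lambda}^H\to\Lambda^H$ is $H$--equivariant, so $H$--orbits of subtrees in $\CE^H(\CA,\CB)$ correspond bijectively to their images in $\Lambda^H$.

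Next I would apply Lemma \ref{lm:chains}, which identifies these images precisely with the set $\alpha(\Lambda^H)\cup\CV_{ess}(\Lambda^H)$. Thus the edges of $T^H/H$ are in natural bijection with $\alpha(\Lambda^H)\sqcup\CV_{ess}(\Lambda^H)$.

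It remains to determine which of these edges have trivial stabilizer. An essential vertex lifts to an intersection $K\cap L$ that is a single point, so the corresponding edge $e_K^L$ has trivial stabilizer (edge stabilizers come from the common stabilizers of the subtrees $K\in\CV(\CA)$ and $L\in\CV(\CB)$ meeting in an unbounded line; a single vertex cannot carry a nontrivial stabilizer since the action on $\tilde\Lambda$ is free). A maximal essential chain $\alpha\in\alpha(\Lambda^H)$ lifts to an intersection $K\cap L$ which is an infinite line precisely when some element of $H$ identifies two distinct vertices of $\mathcal V(\alpha)$, i.e., when $\alpha$ is not simply connected; in that case, since $T$ is very small, the stabilizer of $e_K^L$ is the maximal cyclic subgroup of $H$ acting on that line, hence nontrivial. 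Conversely, when $\alpha$ is simply connected the intersection is a bounded segment and the edge stabilizer is trivial.

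Counting the edges of $T^H/H$ with trivial stabilizer therefore gives
\[ \vol_T(H) = \#|\alpha_{sc}(\Lambda^H)| + \#|\CV_{ess}(\Lambda^H)|, \]
as desired. The only piece I would expect to need to write out carefully is the identification of which chains yield nontrivial edge stabilizers, but this is already spelled out in the paragraph preceding the theorem statement, so the proof really is just a synthesis of the preceding two lemmas.
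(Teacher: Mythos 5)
Your proposal is correct and follows essentially the same route as the paper: the paper presents this theorem with the preface ``We have now proved:'' precisely because it is the synthesis of Lemma \ref{lm:minimaltree}, Lemma \ref{lm:chains}, and the preceding paragraph identifying edges of $T^H/H$ with maximal essential chains and essential vertices and characterizing nontrivial edge stabilizers via non--simply connected chains. Your bookkeeping of which edges carry trivial stabilizer matches the paper's argument exactly.
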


\begin{ex}\label{ex:amalgamated}
  Let $T$ be the cyclic tree dual to the splitting $F_3 = \la a,b
  \ra*_{[a,b]} \la [a,b],c \ra$.  Then the basis $\{ a ,b \} \cup \{ c
  \}$ is relative to this splitting.  Let $H$ be a subgroup in the
  conjugacy class represented by the graph in Figure
  \ref{fig:amalgamated}.  Chains are denoted by dotted lines, all of
  which are essential, and essential vertices are black. There are two simply connected chains and nine essential vertices; hence $\vol_T(H) = 11$.  In Figure \ref{fig:amalgamated2} we
  demonstrate the vertex groups of the induced graph of groups
  decomposition $T^H/H$.  The underlying graph of $T^H/H$ has three
  vertices: $v_1,v_2$ and $v_3$.  There are seven edges from $v_1$ to $v_2$
  and five edges from $v_2$ to $v_3$, one of which has a nontrivial
  stabilizer.
  \begin{figure}[t]
    \centering
    \includegraphics{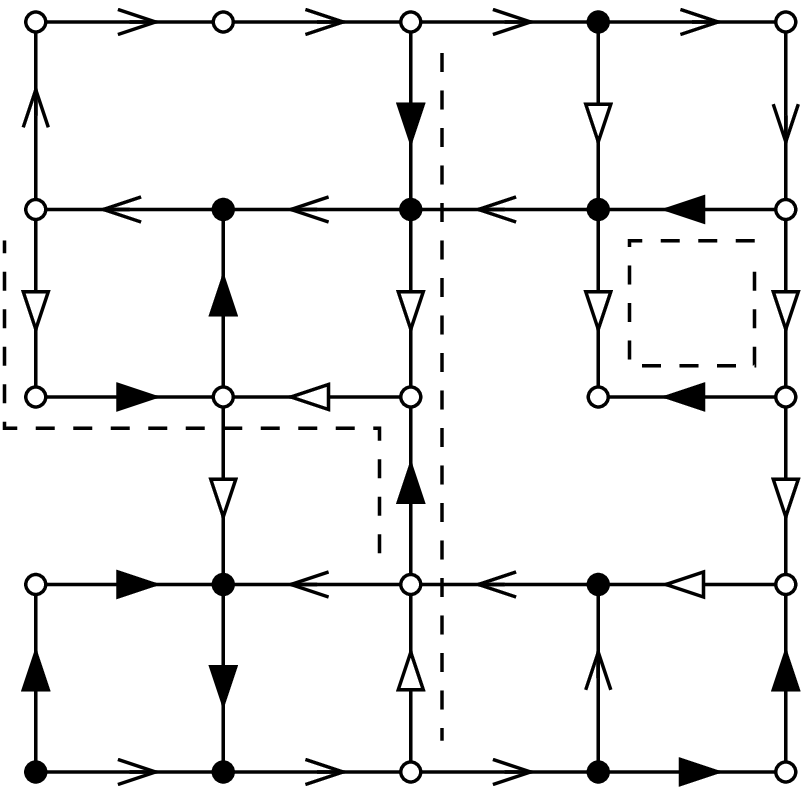}
    \caption{The graph $\Lambda^H$ in Example \ref{ex:amalgamated}.
      The arrows describe the immersion $\Lambda^H \to \Lambda$.  The
      black arrows are sent to the petal corresponding to ``$a$'', the
      white arrows to ``$b$'' and the open arrows to ``$c$''.}
    \label{fig:amalgamated}
  \end{figure}
  \begin{figure}[t]
    \centering
    \psfrag{a}{$v_1$}
    \psfrag{b}{$v_2$}
    \psfrag{c}{$v_3$}
    \includegraphics{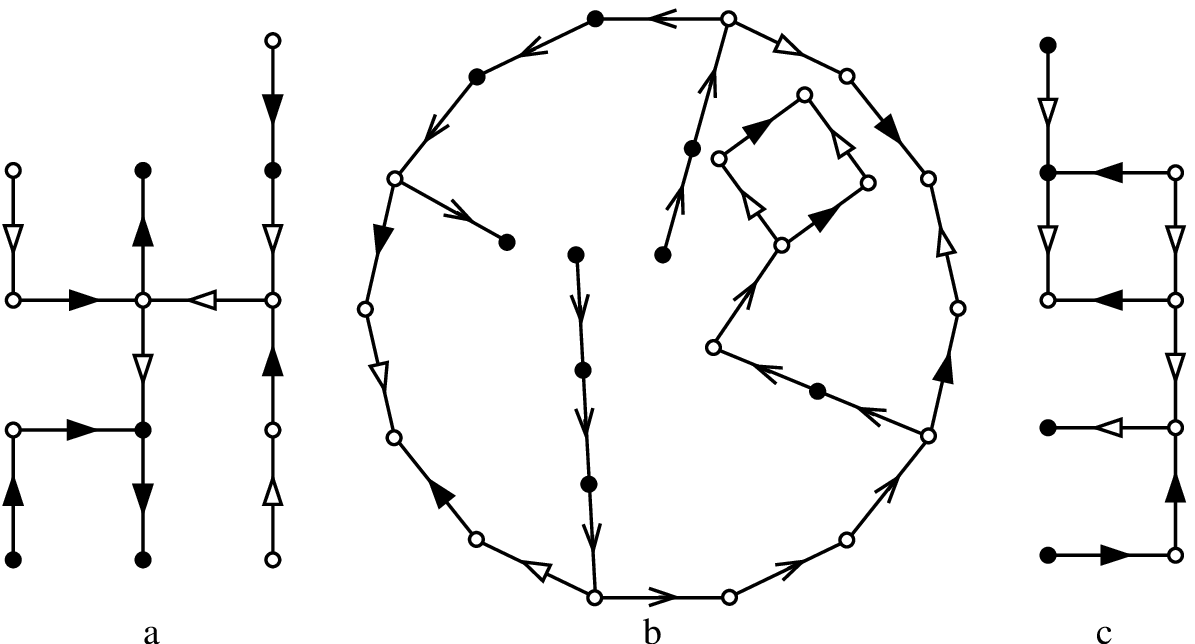}
    \caption{Graphs representing the conjugacy class of the vertex
      groups of the graph of groups decomposition $T^H/H$ in Example
      \ref{ex:amalgamated}.}
    \label{fig:amalgamated2}
  \end{figure}
\end{ex}

We state one final definition which will be used in Section
\ref{sc:growth}.


\begin{defi}\label{def:amal-cross}
  Let $H$ and $\Lambda^H$ be as in Theorem
  \ref{prop:amalgam-free-volume}.  A vertex of $\Lambda^H$ is a {\it
    crossing vertex} if it is either essential, or if it is a vertex
  of an essential chain and is adjacent to a $\CB_0$--edge.
\end{defi}


\subsection{Free volume for an HNN-extension}\label{ssc:HNN}

Now suppose that we have a cyclic HNN-extension
\[ F_k = (A_0 * \la t_0^{-1} c t_0 \ra) *_{\la c \ra} \] as in
Corollary \ref{HNN-basis}, with $c \in A_0$ indivisible and $T$ a
cyclic tree.  Let $\CA_0 = \{ a_1,\ldots,a_{k-1}\}$ be a basis for
$A_0$.  Then $\CA_0 \cup \{t_0\}$ is a basis for $F_k$ relative to
$T$.  Let $\Lambda_{\CA_0}$ be the $(k-1)$--petaled rose labeled by the
elements of $\CA_0$, and let $\Lambda = \Lambda_{\CA_0 \cup \{t_0\}}$
be the $k$--petaled rose labeled by the basis $\CA_0 \cup \{ t_0 \}$.
There is a natural inclusion $\iota_{\CA_0}: \Lambda_{\CA_0} \to
\Lambda$ which lifts to an immersion $\tilde{\iota}_{\CA_0}\co
\tilde{\Lambda}_{\CA_0} \to \Lambda$. Now let $\Lambda_\CA$ be the
$k$--rose, labeled by the elements of $\CA_0 \cup \{ t_0ct_0^{-1} \}$.
There is a natural map $\iota_\CA\co\Lambda_\CA \to \Lambda$ which
lifts to a map $\tilde{\iota}: \tilde{\Lambda}_\CA \to \Lambda$ from
the universal cover of $\Lambda_\CA$.  As before, we say that an edge
of $\Lambda$ corresponding to an element of $\CA_0$ is an
$\CA_0$--edge, and that an edge of $\Lambda$ corresponding to $t_0$ is
a $\{t_0\}$--edge.  A $\{t_0\}$--edge is positively oriented if it
corresponds to $t_0$ and negatively oriented if it corresponds to
$t_0^{-1}$.

Let $\CV(\CA)$ be the set of lifts of $\tilde{\iota}:
\tilde{\Lambda}_{\CA} \to \Lambda$ to $\tilde{\Lambda}$. Each lift
corresponds uniquely to a vertex of $T$, and two vertices are adjacent
if their $\ocap$--intersection of the two corresponding subtrees of
$\tilde{\Lambda}$ is nonempty and hence an infinite line. Let
$\CE(\CA)$ denote the set of all such pairwise $\ocap$--intersections
between elements of $\CV(\CA)$. Let $H$ be a finitely generated
subgroup of $F_k$, and let $\tilde{\Lambda}^H$ be its minimal subtree
in $\tilde{\Lambda}$. We denote by $\CV^H(\CA)$ the set consisting of
nontrivial subtrees of the form $K^H = \tilde{\Lambda}^H \ocap K$ for
$K \in \CV(\CA)$ which are not properly contained in a subtree
$\tilde{\Lambda}^H \ocap$ for any other $L \in \CV(\CA)$. We then let
$\CE^H(\CA)$ denote the set of (possibly trivial) subtrees $K^H \cap
L^H$ of trees $K^H$ and $L^H$ in $\CV^H(\CA)$.  Lemma
\ref{lm:minimaltree} transfers readily to the HNN-case, and so we have
a hold on the minimal subtree $T^H$.

A chain in $\Lambda^H$ is defined as in the amalgamated setting for
the conjugacy class of $c \in F_k$. We define vertices of a chain and simple connectivity of chain as before.

We refer to an edge in $\Lambda^H$ as an $\CA_0$--edge or $\{t_0\}$--edge 
according to its image in $\Lambda$.  A chain $\alpha$ is {\it
  nonessential} if:

\begin{enumerate}
\item any edge adjacent to $\alpha$ is a positively oriented
  $\{t_0\}$--edge which is adjacent to $\alpha$ at a vertex in $\CV(\alpha)$;
  or
\item $\alpha$ is only adjacent to $\CA_0$--edges and negatively
  oriented $\{t_0\}$--edges.
\end{enumerate}
Otherwise we say that $\alpha$ is {\it essential}.  As in the case of
amalgamated free products, the positively oriented $\{t_0\}$--edges
adjacent to a nonessential chain are considered $\CA_0$--edges.  The
set of all maximal essential chains on $\Lambda^H$ is denoted by
$\alpha(\Lambda_H)$.  The subset of simply connected essential chains
is denoted $\alpha_{sc}(\Lambda_H)$.

We say that a vertex is {\it essential} if it is the initial vertex of
a positively oriented $\{t_0\}$--edge, but is not a chain vertex of any
chain. The set of all essential vertices we denote by
$\CV_{ess}(\Lambda_H)$.

With these definitions in place, we give an analogue of Lemma
\ref{lm:chains} whose proof is similar.

\begin{lem}
  With the notation above, the image of a subtree in $\CE^H(\CA)$ in
  $\Lambda^H$ is either a maximal essential chain or an essential
  vertex. Conversely, every maximal essential chain or vertex is the
  image of some subtree in $\CE^H(\CA)$.
\end{lem}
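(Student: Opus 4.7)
The plan is to mimic the proof of Lemma \ref{lm:chains}, with the role of $\CB_0$--edges (which in the amalgamated case distinguish the two sides of a cyclic $T$--edge) now played by positively oriented $\{t_0\}$--edges in the HNN setting. The key observation is that the asymmetry between the two endpoints of a $T$--edge, which in the amalgamated case is visible as the $A$/$B$ alternation, is carried in the HNN case by the orientation of the stable letter $t_0$, with the orientation convention fixed by Corollary \ref{HNN-basis}.

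For the forward direction, fix $K, L \in \CV^H(\CA)$ with $K \neq L$ and $K \cap L$ nonempty. If $K \cap L$ is a single vertex $v$, then $K$ and $L$ correspond to adjacent vertices of $T$, and the $T$--edge between them arises from a positively oriented $\{t_0\}$--edge of $\tilde{\Lambda}$ incident at $v$; hence the image $\bar v$ in $\Lambda^H$ is the initial vertex of a positively oriented $\{t_0\}$--edge. Further, $\bar v$ cannot be a chain vertex: otherwise the $c$--loop through $\bar v$ would lift to a $c$--segment adjacent to $v$ and lying in both $K$ and $L$, contradicting $K \cap L = \{v\}$. Thus $\bar v \in \CV_{ess}(\Lambda^H)$. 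If instead $K \cap L$ is a nondegenerate segment, its image in $\Lambda^H$ is a maximal chain, and the asymmetry $K \not\subseteq L$, $L \not\subseteq K$ forces the presence of an $\CA_0$--edge or negatively oriented $\{t_0\}$--edge adjacent to the chain (precluding nonessential type (2)) along with a positively oriented $\{t_0\}$--edge adjacent to the chain at a non-chain vertex (precluding nonessential type (1)); hence the chain is essential.

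For the converse, given an essential vertex $\bar v$ or maximal essential chain $\alpha$ in $\Lambda^H$, one constructs $K$ and $L$ along the same lines as in the amalgamated case. Remove the interiors of the $\{t_0\}$--edges from $\Lambda^H$, and attach each chain in $\alpha(\Lambda^H)$ to the appropriate resulting components along its chain vertices. The distinguishing positively oriented $\{t_0\}$--edge at $\bar v$ (or along $\alpha$) cuts the neighbourhood of $\bar v$ (resp.\ $\alpha$) into two ``sides''; lifting each side compatibly to $\tilde{\Lambda}^H$ produces subtrees $K, L \in \CV^H(\CA)$ whose intersection maps onto $\bar v$ (resp.\ $\alpha$).

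The main obstacle is the converse: one must verify that the two ``sides'' of the cut $\{t_0\}$--edge actually lift to distinct trees in $\CV^H(\CA)$ and that the resulting lifts are maximal. This relies on the orientation convention in Corollary \ref{HNN-basis} to ensure that every $\{t_0\}$--edge of $\tilde{\Lambda}$ genuinely corresponds to an edge of $T$, preventing the two sides from being identified by an HNN translation and collapsing $K$ onto $L$. Once this is established, the correspondence between intersection type and chain structure parallels the amalgamated case directly.
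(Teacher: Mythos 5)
Your proposal follows the same route the paper intends: the paper states this lemma without proof, deferring to Lemma \ref{lm:chains}, and your argument is exactly that adaptation, with the forward direction split according to whether $K\cap L$ is a vertex or a segment and the converse obtained by cutting $\Lambda^H$ along $\{t_0\}$--edges and reattaching chains, the orientation of $t_0$ carrying the asymmetry that the $\CA$/$\CB_0$ labelling carries in the amalgamated case. One slip in the segment case: the positively oriented $\{t_0\}$--edges contributed by the tree whose $A_0$--rose does \emph{not} contain the axis of the relevant conjugate of $c$ attach to that axis precisely along its $\la c\ra$--periodic points, i.e.\ at chain vertices rather than at non-chain vertices, and it is these edges that rule out nonessential type (2), while the $\CA_0$--edges and negatively oriented $\{t_0\}$--edges coming from the tree that does contain the axis are what rule out type (1). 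So your two parenthetical attributions should be interchanged and the phrase ``at a non-chain vertex'' dropped; with that repair the case analysis closes as in the amalgamated case.
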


We can now state how to count free volume for a finitely generated
subgroup with respect to a cyclic tree dual to an HNN-extension, as
the argument now proceeds as for the amalgamation case.


\begin{thm}\label{prop:hnn-free-volume}
  Suppose that $T$ is a very small cyclic tree dual to a splitting
  $F_k = (A_0 * \la t_0 c t_0^{-1} \ra) *_{\la c \ra}$ and $H$ is a
  finitely generated subgroup of $F_k$.  Let $\CA_0 \cup \{t_0\}$ be a
  basis relative to $T$, and define $\Lambda = \Lambda_{\CA_0 \cup \{
    t_0 \}}$ and $\Lambda^H = \tilde{\Lambda}^H/H$.  Then:
  \[\vol_T(H) = \#|\alpha_{sc}(\Lambda^H) | + \#
  |\CV_{ess}(\Lambda^H)|.\]\qed
\end{thm}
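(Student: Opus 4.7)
The plan is to mirror the argument for Theorem \ref{prop:amalgam-free-volume} nearly line-for-line, adjusting only for the orientation conventions introduced by the single stable letter $t_0$ in the HNN setting.

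First I would establish the HNN analogue of Lemma \ref{lm:minimaltree}: provided $H$ does not fix a point in $T$, the subcomplex of $T$ with vertex set $V^H(\CA)$ and edge set $E^H(\CA)$ coincides with the minimal $H$--invariant subtree $T^H$. Connectivity follows by joining any two vertices of $V^H(\CA)$ via an arc in $\tilde\Lambda^H$ that passes through a sequence of subtrees in $\CV^H(\CA)$, inducing an edge path in $T$. Minimality follows as in the amalgamated case: every edge of $E^H(\CA)$ lies on the axis of a hyperbolic element of $H$, obtained by extending a path in $\tilde\Lambda^H$ through the edge far enough to force non-elliptic translation, and then invoking the Culler--Morgan characterization of the minimal tree as a union of axes. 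This yields an $H$--equivariant bijection between subtrees in $\CE^H(\CA)$ and edges of $T^H$, so that edges of $T^H/H$ correspond to $H$--orbits of subtrees in $\CE^H(\CA)$.

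Next I would invoke the lemma stated immediately before the theorem, which identifies the image in $\Lambda^H$ of each subtree in $\CE^H(\CA)$ with either a maximal essential chain or an essential vertex, and conversely. Composing with the bijection from the previous step produces a bijection between edges of $T^H/H$ and the disjoint union $\alpha(\Lambda^H) \cup \CV_{ess}(\Lambda^H)$. Finally, I would determine which of these edges have trivial stabilizer. For an edge $e_K^L$ corresponding to $K \cap L \in \CE^H(\CA)$: if $K \cap L$ is a single vertex (the essential-vertex case), the stabilizer is trivial since $F_k$ acts freely on $\tilde\Lambda$; if $K \cap L$ is a non-degenerate segment (the chain case), since $T$ is very small the edge stabilizer is maximal cyclic if nontrivial, and it is nontrivial if and only if $K \cap L$ is an infinite line, which is exactly when the two endpoint vertices of the corresponding chain in $\Lambda^H$ coincide, i.e., the chain fails to be simply connected. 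Summing over the two types of contributions with trivial stabilizer gives the stated formula.

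The main subtlety, rather than a serious obstacle, is verifying that the HNN-specific definitions of essential chain and essential vertex — in particular the asymmetric role of positively and negatively oriented $\{t_0\}$--edges — correctly characterize those images in $\Lambda^H$ coming from nontrivial $\ocap$--intersections between distinct subtrees in $\CV^H(\CA)$. This asymmetry is forced by the HNN structure: inside the vertex group $A = A_0 * \la t_0^{-1} c t_0 \ra$, a negatively oriented $\{t_0\}$--edge is absorbed into a single tree of $\CV(\CA)$ via the element $t_0^{-1} c t_0$, whereas a positively oriented $\{t_0\}$--edge straddles two distinct trees in $\CV(\CA)$ joined by an edge of $T$. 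Once this bookkeeping is checked, the counting argument from the amalgamated case transfers verbatim.
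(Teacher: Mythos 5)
Your proposal is correct and follows essentially the same route as the paper, which leaves this theorem without a separate proof by declaring that ``the argument now proceeds as for the amalgamation case.'' You have filled in exactly the steps being alluded to: the HNN analogue of Lemma~\ref{lm:minimaltree} controlling $T^H$, the preceding lemma identifying edges of $T^H/H$ with maximal essential chains and essential vertices in $\Lambda^H$, and the characterization of nontrivial edge stabilizers via non-simply-connected chains, together with the orientation bookkeeping for $\{t_0\}$--edges that the HNN structure forces.
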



\begin{ex}\label{ex:HNN}
  Here we let $T$ be the cyclic tree dual to the splitting $F_3 =
  \langle a, b, t_0^{-1} [a,b] t_0 \rangle \ast_{\langle [a,b]
    \rangle}$, with cyclic edge generator $c = [a,b]$. Let $H$ be a
  subgroup in the conjugacy class represented by the graph in Figure
  \ref{fig:HNN}. The eight chains are indicated by dotted lines; three
  of these are inessential, and one is not simply connected. There is
  a single essential vertex, indicated in black. The free volume is
  therefore $\vol_T(H) = 5$.
\end{ex}

\begin{figure}[t]
  \centering
  \includegraphics{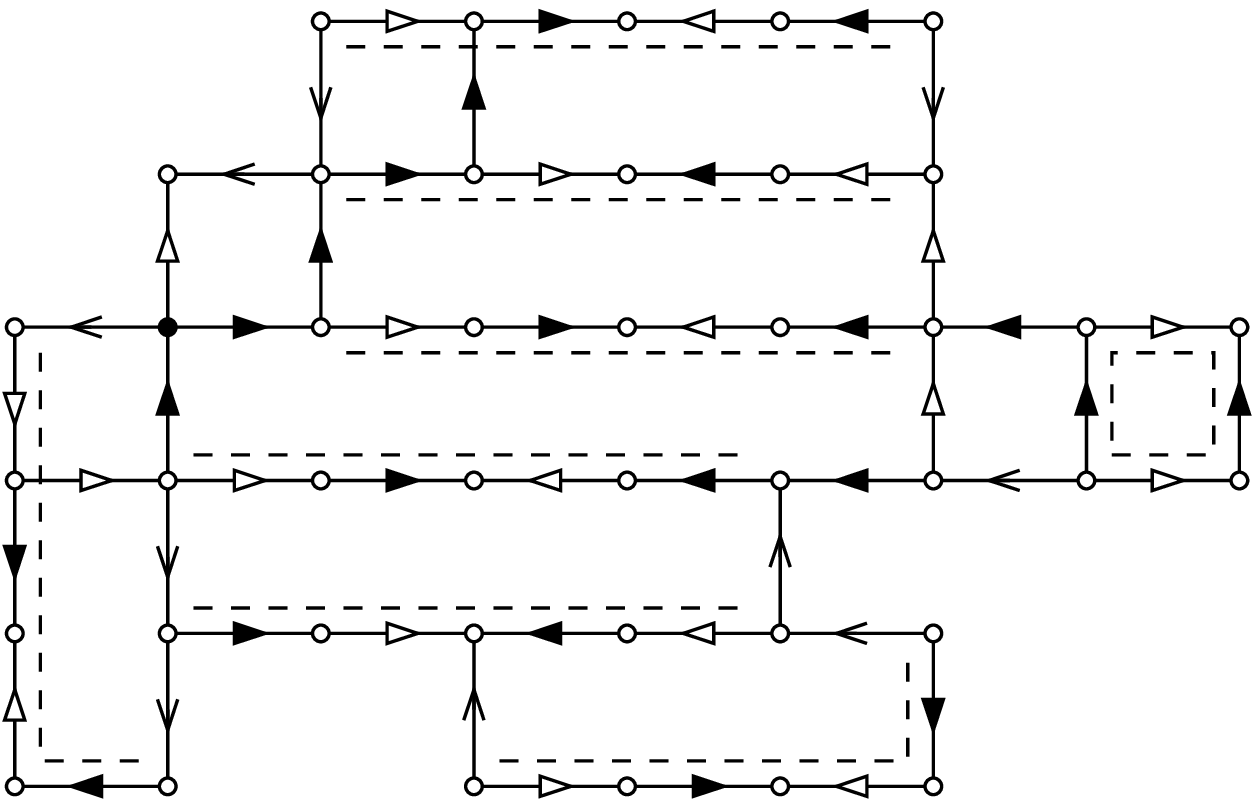}
  \caption{The graph $\Lambda^H$ in Example \ref{ex:HNN}. The arrows
    describe the immersion $\Lambda^H \to \Lambda$. The black arrows
    are sent to the petal corresponding to ``$a$'', white arrows to
    ``$b$'', and the open arrows to ``$t_0$''. Chains are indicated by
    dotted line segments.}\label{fig:HNN}
\end{figure}

\begin{figure}[t]
  \centering
  \includegraphics{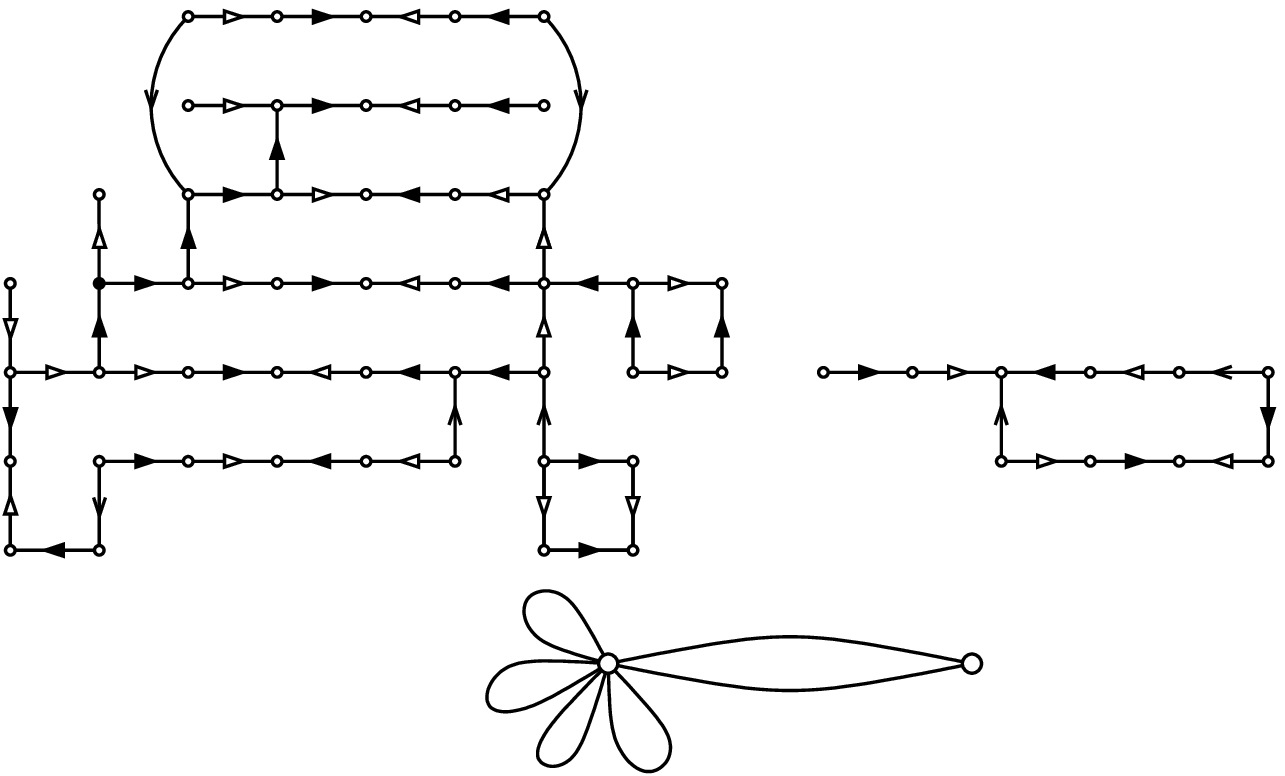}
  \caption{ The top two graphs represent the conjugacy class of the
    vertex groups of the graph of groups decomposition $T^H/H$ in
    Example \ref{ex:HNN}. The graph below represents the graph of
    groups $T^H/H$.}
\end{figure}


Again we have a notion of crossing vertex for an HNN-extension similar
to Definition \ref{def:amal-cross}.

\begin{defi}\label{def:hnn-cross}
  Let $H$ and $\Lambda^H$ be as in Theorem
  \ref{prop:hnn-free-volume}.  A vertex of $\Lambda^H$ is a {\it
    crossing vertex} if it is an essential vertex, or if it is a vertex of
  an essential chain and is adjacent to a positively oriented $\{t_0\}$--edge.
\end{defi}


\section{Twisted volume growth}\label{sc:growth}

Let $T_1$ and $T_2$ be two very small cyclic trees for $F_k$ with edge
stabilizers respectively generated by conjugates of the elements $c_1$
and $c_2$ and with associated Dehn twist elements $\delta_1$ and
$\delta_2$.  Fix bases $\CT_1 = \CA_1 \cup \CB_1$ and $\CT_2 = \CA_2
\cup \CB_2$ for $F_k$ relative to these trees. 
Let $\Lambda_1 = \Lambda_{\CT_1}$ and $\Lambda_2 = \Lambda_{\CT_2}$ be
the $k$--petaled roses for these bases, as constructed in Section
\ref{counting}.  
  
The goal of this section is to prove Theorem \ref{th:B} of the
introduction; that is, we want to find bounds for
$\vol_{T_2}(\delta^{\pm n}_1(H))$ when $H$ is a finitely generated
malnormal or cyclic subgroup. To begin, we discuss how the graph of
groups decomposition described in Section \ref{counting} of a finitely
generated subgroup $H$ and the according free volume of $H$ changes
upon twisting.

\subsection{Graph composition}\label{ssc:composition}

Let $\nu \co \Lambda_1 \to \Lambda_2$ be a (linear) homotopy
equivalence representing the change in marking. Suppose $\rho\co \CH
\to \Lambda_1$ is a map (not necessarily an immersion) such that the
image of $\pi_1(\CH)$ in $\pi_1(\Lambda_1)$ is a conjugate of $H$.
Then we can form the composition $\nu \circ \rho \co \CH \to
\Lambda_2$.  We define $\CH_{\Lambda_2}$ as the graph (equipped with
the map $\rho_{\Lambda_2} \co \CH_{\Lambda_2} \to \Lambda_2$) obtained
from $\CH$ by subdividing each edge $e \subset \CH$ so that every component of the pre-image of the vertex in $\Lambda_2$ is a vertex.  We say that 
$\CH_{\Lambda_2}$ is obtained from $\CH$ by {\it graph composition
  using $\nu$.}

The following lemma is clear from the definitions.

\begin{lem}\label{composition}
  After folding and pruning the map $\rho_{\Lambda_2}\co
  \CH_{\Lambda_2} \to \Lambda_2$ we obtain an immersion $\rho_2^H\co
  \CG_2^H \to \Lambda_2$ of a core graph $\CG_2^H$ for the subgroup
  $H$.\qed
\end{lem}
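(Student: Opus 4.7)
The plan is to verify that the lemma follows straightforwardly from Stallings' theory of foldings, as recalled in Section \ref{ssc:basics}. The essential content is that each of the three operations---subdivision, folding, and pruning---preserves the conjugacy class of the image subgroup in $\pi_1(\Lambda_2) \cong F_k$, and that the output is necessarily an immersion of a core graph.

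First I would observe that $\CH_{\Lambda_2}$ is obtained from $\CH$ by subdividing edges at preimages of the vertex of $\Lambda_2$, and that $\rho_{\Lambda_2}$ agrees with $\nu \circ \rho$ under this subdivision. In particular, subdivision does not alter the underlying topological space, so the induced map $(\rho_{\Lambda_2})_* \co \pi_1(\CH_{\Lambda_2}) \to \pi_1(\Lambda_2)$ coincides with $\nu_* \circ \rho_*$. Since $\rho_*$ has image a conjugate of $H$ in $\pi_1(\Lambda_1) \cong F_k$ by hypothesis, and $\nu$ represents the change of marking (so that $\nu_*$ is the identity on $F_k$ up to inner automorphism under the two marking identifications), the image of $(\rho_{\Lambda_2})_*$ is again a conjugate of $H$ in $\pi_1(\Lambda_2) \cong F_k$.

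Next I would invoke the standard folding algorithm. Each elementary fold identifies two edges sharing a common initial vertex that have the same image in $\Lambda_2$; such an identification is a homotopy equivalence of the graph onto its quotient restricted to cycles, and in particular does not change the image of $\pi_1$. Because $\CH_{\Lambda_2}$ is finite, Stallings' argument guarantees that this process terminates after finitely many folds in a simplicial map which is locally injective. Pruning then iteratively deletes valence-one edges (which correspond to back-tracks and so contribute nothing to the $\pi_1$-image) until no valence-one vertices remain, yielding a core graph $\CG_2^H$ and an immersion $\rho_2^H \co \CG_2^H \to \Lambda_2$.

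By the uniqueness property of the Stallings graph, $\rho_2^H \co \CG_2^H \to \Lambda_2$ is the core graph associated to the subgroup $H$ relative to the basis $\CT_2$. No genuine obstacle is expected in executing this plan; the only point requiring care is the observation that the marking-change map $\nu$ preserves the $F_k$-conjugacy class of the subgroup realized by $\rho$, and this is built into the definition of $\nu$ as representing a change of marking.
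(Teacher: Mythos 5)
Your argument is correct and is exactly the standard Stallings folding argument that the paper relies on; indeed the paper offers no proof at all, declaring the lemma clear from the definitions. The only wording to tighten is that an individual fold need not be a homotopy equivalence (a fold can reduce the rank of the graph) --- the relevant invariant, which you also state, is that each fold preserves the image of $\pi_1$ in $\pi_1(\Lambda_2)$.
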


\subsection{Graph surgery}\label{ssc:surgery}

Fix an immersion of a core graph $\rho_1^H\co \CG_1^H \to
\Lambda_1$. We label edges as $\CA_1$--edges or $\CB_1$--edges according
to their image in $\Lambda_1$, as in Section \ref{counting}. We then
locate the simply connected and non-simply connected chains, and the
essential and nonessential vertices and chains. Recall that a {\it
  crossing vertex} is described in Definitions \ref{def:amal-cross}
and \ref{def:hnn-cross}

For $n \geq 0$, let $a_n = [0,1]$ be an interval subdivided into
$|c_1^n|_{\CT_1}$ edges and let $\bar{a}_n$ denote $a_n$ with
opposite orientation.  Let $v \in \CG_1^H$ be a crossing vertex.  Add
a new vertex $v'$ and insert a copy of the the interval $a_n$ by
attaching the vertex $0$ to $v$ and the vertex $1$ to $v'$.  Now
perform one of the two following operations:
\begin{enumerate}

\item If $T_1$ is dual to an amalgamated free product, then for each
  $\CB_1$--edge $e$ adjacent to $v$, redefine the initial vertex of
  $e$ to be $v'$.

\item If $T_1$ is dual to an HNN-extension (so that $\CB_1$ is equal
  to the one-element set $\{t_0\}$ for some $t_0$), then redefine to be $v'$ the initial vertex of the unique positively oriented $\{t_0\}$--edge adjacent to $v$.

\end{enumerate}

Let $\Upsilon_n^H$ be the graph obtained by performing the above
appropriate operation at each crossing vertex of $\CG_1^H$.  Define a
map $\rho_1\co \Upsilon_n^H \to \Lambda_1$ which is equal to $\rho^H_1$
on edges of $\CG_1^H$, and which maps each new arc $a_n$ to the edge
path for $c_1^n$ in $\Lambda_1$.  We say that $\Upsilon_n^H$ is obtained
from $\CG^H_1$ by {\it graph surgery along $T_1$}.

\begin{lem}\label{core1}
  After folding and pruning the map $\rho_1\co \Upsilon_n^H \to
  \Lambda_1$, we obtain the immersion of the core graph
  $\rho_1^{\delta_1^n(H)}\co \CG_1^{\delta_1^n(H)} \to \Lambda_1$ for
  the subgroup $\delta_1^n(H)$.
\end{lem}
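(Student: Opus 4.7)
The plan is to verify that graph surgery realizes the Dehn twist $\delta_1^n$ at the level of loops, in the sense that $\rho_{1\ast}(\pi_1(\Upsilon_n^H))$ is a conjugate of $\delta_1^n(H)$ in $F_k = \pi_1(\Lambda_1)$. Once this is established, Stallings' folding theorem guarantees that the iterative folding of $\rho_1 \co \Upsilon_n^H \to \Lambda_1$ terminates in an immersion, and pruning valence-one vertices produces the immersed core graph for $\delta_1^n(H)$. Since such a core immersion is unique up to isomorphism for a fixed conjugacy class of finitely generated subgroups, the result must coincide with $\rho_1^{\delta_1^n(H)} \co \CG_1^{\delta_1^n(H)} \to \Lambda_1$ by definition.

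Connectivity of $\Upsilon_n^H$ is immediate, since each new vertex $v'$ is joined to the (connected) subgraph $\CG_1^H$ by the attached arc $a_n$. For the identification of the fundamental group image, fix a basepoint $* \in \CG_1^H \subset \Upsilon_n^H$ and consider an immersed closed edge path $\gamma$ in $\CG_1^H$ representing an element $w \in H$, which is read off by $\rho_1^H$ as a reduced word in $\CA_1 \cup \CB_1$. I would build a corresponding loop $\gamma'$ in $\Upsilon_n^H$ by the following prescription: whenever $\gamma$ visits a crossing vertex $v$ and transitions from an $\CA_1$-edge to a $\CB_1$-edge (amalgamated case) or exits $v$ along a positively oriented $\{t_0\}$-edge (HNN case), insert a forward traversal of the arc $a_n$ from $v$ to $v'$; for the reverse transitions insert $\bar a_n$. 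The surgery was set up precisely so that the relocated $\CB_1$-edges (respectively positively oriented $\{t_0\}$-edges) now meet $v'$ rather than $v$, making $\gamma'$ a well-defined closed path. Reading $\rho_1 \circ \gamma'$ in $\Lambda_1$ gives the word $w$ with $c_1^n$ or $c_1^{-n}$ inserted at precisely these transition points, which is exactly $\delta_1^n(w)$ by the formulas of Section \ref{ssc:splittings}. Conversely every element of $\pi_1(\Upsilon_n^H, *)$ decomposes this way, so $\rho_{1\ast}(\pi_1(\Upsilon_n^H, *))$ is a conjugate of $\delta_1^n(H)$.

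The step I expect to be the main obstacle is the case analysis confirming that crossing vertices are the \emph{only} places where the Dehn twist demands a $c_1^{\pm n}$ insertion, so that surgery there suffices and no insertions are missed elsewhere. By the classification in Section \ref{counting}, any $\CA_1 \leftrightarrow \CB_1$ transition in a reduced word (or traversal of a positively oriented $\{t_0\}$-edge) must lie at an essential vertex or at a chain vertex of an essential chain adjacent to a $\CB_1$-edge; interior points of chains and chain vertices of nonessential chains witness no such transition and correctly receive no surgery. Once this is verified, the identification above combined with Stallings' folding theorem and the uniqueness of the immersed core graph for $\delta_1^n(H)$ finishes the proof.
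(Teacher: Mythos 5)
Your strategy---verify directly that $\rho_{1*}(\pi_1(\Upsilon_n^H))$ is a conjugate of $\delta_1^n(H)$ and then invoke Stallings folding plus uniqueness of the immersed core graph---is the natural one, and it differs from the paper's route, which instead recognizes $\Upsilon_n^H$ as (a fold of) the graph composition $\CH_{\Lambda_2}$ of $\CG_1^H$ with the change-of-marking homotopy equivalence determined by the twist, and then quotes Lemma \ref{composition}. But the step you flag as the main obstacle is exactly where your argument fails as written. It is not true that every $\CA_1\!\to\!\CB_1$ transition in a reduced word (resp.\ every traversal of a positively oriented $\{t_0\}$--edge) occurs at a crossing vertex: a \emph{nonessential} chain of the first type is by definition one all of whose adjacent edges are $\CB_1$--edges (resp.\ positively oriented $\{t_0\}$--edges) attached at its chain vertices, and those vertices are not crossing vertices and receive no surgery, yet loops through them do make precisely such transitions. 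Concretely, for the splitting $F_3 = \la a_1,a_2\ra *_{\la c\ra}\la c, b\ra$ with $c = a_1a_2$ and $H = \la c^2, b\ra$, the core graph $\CG_1^H$ is a circle reading $c^2$ wedged at $p$ with a $b$--loop; the circle is a nonessential chain, $p$ is not a crossing vertex, and $\Upsilon_n^H = \CG_1^H$. The loop $\gamma'$ corresponding to the generator $w=b$ reads $b$, whereas $\delta_1^n(w)=c^nbc^{-n}$, so your asserted identity $\rho_1\circ\gamma' = \delta_1^n(w)$ is false, and so is the classification claim in your third paragraph.

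The lemma still holds in such cases, but for a reason your proof does not supply: the insertions that $\delta_1^n$ demands at the vertices of a nonessential chain are \emph{absorbed} by the power of $c_1$ carried by that chain rather than being unnecessary. In the example, $\delta_1^n(H)=\la c^2, c^nbc^{-n}\ra = c^nHc^{-n}$, so the subgroup is unchanged up to conjugacy even though individual generators are not; in general, for a closed chain reading $c_1^\ell$ the flanking $c_1^{-n}\cdots c_1^{n}$ commutes past $c_1^\ell$ and cancels, which in graph terms is the statement that an inserted arc $a_n$ at such a vertex would fold entirely onto the chain and disappear. You need to prove that, over all loops simultaneously, the discrepancies at each nonessential chain assemble into a conjugation (or fold away), so that omitting surgery there does not change the conjugacy class of the image subgroup. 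This is exactly the content the paper sidesteps by phrasing the proof through graph composition and folding (Lemma \ref{composition}) instead of an element-wise identity, and it is the missing piece in your argument.
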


\begin{proof}
  Let $\nu$ and $\CH$ be as in Section \ref{ssc:composition}, where
  $\Lambda_2$ is the $k$--petaled rose corresponding to the image of
  the basis $\CA_1 \cup \CB_1$ under the Dehn twist $\delta_1$. Recall
  that this means that the petals of $\Lambda_2 $ correspond to
  elements of the basis $\CA_1 \cup c_1\CB_1c_1^{-1}$ if $T_1$ is dual to
  an amalgamated free product, and to the basis $\CA_1 \cup c_1\CB_1 $
  if $T_1$ is dual to an HNN-extension. Also as in Section
  \ref{ssc:composition}, let $\CH_{\Lambda_2}$ be the graph obtained
  from $\CH$ by graph composition using $\nu$. If $T_1$ is dual to an
  HNN-extension, then $\CH_{\Lambda_2}$ is equal to
  $\Upsilon_n^H$. Otherwise the graph $\Upsilon_n^H$ is obtained from
  $\CH_{\Lambda_2}$ by folding and pruning segments corresponding to
  $\bar{a}_na_n$ between adjacent $\CB_1$--edges.
\end{proof}

It is clear that by inserting $\bar{a}_n$ at each crossing vertex to
obtain $\Upsilon_{-n}^H$, we can fold and prune to obtain an
immersion of a core graph $\CG_1^{\delta_1^{-n}(H)}$ for the subgroup
$\delta_1^{-n}(H)$.

Notice that if the crossing vertex $v$ lies on a non-simply connected
chain, then the entire newly added interval $a_n$ can be folded onto
this chain; it is for this reason that we record free volume instead of total
volume.  Combining Lemmas \ref{composition} and \ref{core1} we obtain
the following corollary describing the change in the graph of groups
decomposition for $H$ upon twisting.

\begin{cor}\label{co:surgery}
  Suppose $\rho_1^H\co \CG_1^H \to \Lambda_1$ is an immersion of a
  core graph for $H$ and let $\rho_1\co \Upsilon_N^H \to \Lambda_1$ be
  the result obtained by graph surgery along $T_1$. Then after folding and
  pruning the composition $\nu \circ \rho_1 \co \Upsilon_n^H \to
  \Lambda_2$, we obtain an immersion $\rho_2^{\delta_1^n(H)}\co
  \CG_2^{\delta_1^n(H)} \to \Lambda_2$ of a core graph
  $\CG_2^{\delta_1^n(H)}$ for the subgroup $\delta^n_1(H)$.  \qed
\end{cor}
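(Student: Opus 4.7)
The plan is essentially to paste together Lemmas \ref{composition} and \ref{core1}, verifying that the two folding/pruning stages can be combined into one. I would first apply Lemma \ref{core1} to $\rho_1 \co \Upsilon_n^H \to \Lambda_1$, obtaining the immersion of a core graph $\rho_1^{\delta_1^n(H)} \co \CG_1^{\delta_1^n(H)} \to \Lambda_1$ whose fundamental group represents the conjugacy class of $\delta_1^n(H) \subset F_k$. Next, I would apply Lemma \ref{composition} with the role of $H$ played by $\delta_1^n(H)$, taking $\CH = \CG_1^{\delta_1^n(H)}$ and $\rho = \rho_1^{\delta_1^n(H)}$. Graph composition using $\nu$ followed by folding and pruning then yields the desired immersion $\rho_2^{\delta_1^n(H)} \co \CG_2^{\delta_1^n(H)} \to \Lambda_2$ of a core graph for $\delta_1^n(H)$.

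The one thing that requires justification is that this two-stage construction produces the same final immersion as the single-stage procedure described in the statement, namely graph-composing and then folding and pruning the map $\nu \circ \rho_1 \co \Upsilon_n^H \to \Lambda_2$ directly. I would argue this as follows. Folding and pruning a map $\CH \to \CG$ of graphs is characterized by the fact that the output is an immersion of a core graph, and by Stallings' result such an immersion for a given conjugacy class of finitely generated subgroup is unique up to isomorphism. Both procedures produce immersions of core graphs into $\Lambda_2$ whose $\pi_1$-images equal a common conjugate of $\delta_1^n(H)$ in $F_k$; therefore they produce the same immersion.

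Equivalently, one can observe directly that an intermediate folding move performed before the graph composition with $\nu$ descends to a legal folding move on $\CH_{\Lambda_2}$ (after the corresponding subdivisions are made), and pruning a valence-one edge commutes with post-composition by $\nu$. Since folding and pruning are confluent operations in the sense of Stallings, this reorganization of the two stages into one is harmless.

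I do not expect any serious obstacle here: the corollary is essentially a bookkeeping assertion that the constructions of Sections \ref{ssc:composition} and \ref{ssc:surgery} can be performed in either order. The only minor care needed is to ensure that the auxiliary arcs $a_n$ inserted during graph surgery interact correctly with the subdivision introduced by graph composition, but this is immediate from the definitions since both operations only subdivide edges and introduce maps consistent with the edge-labels of $\Lambda_1$ and $\Lambda_2$.
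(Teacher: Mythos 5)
Your proposal is correct and matches the paper's treatment: the paper offers no separate proof, simply stating that the corollary follows by combining Lemmas \ref{composition} and \ref{core1}, which is exactly what you do (the paper most directly applies Lemma \ref{composition} to the not-necessarily-immersed map $\rho_1\co \Upsilon_n^H \to \Lambda_1$, whose $\pi_1$--image is a conjugate of $\delta_1^n(H)$ by Lemma \ref{core1}, while you fold in $\Lambda_1$ first and then invoke uniqueness of the Stallings immersion --- an immaterial difference). Your appeal to the uniqueness of the folded core immersion for a given conjugacy class is the standard and correct justification.
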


In the next section we show how to control the amount of folding and
pruning that takes place on the newly added intervals $a_n$ in the
above corollary.

\subsection{Safe essential pieces}\label{ssc:safe}

Suppose that $T_2$ is a very small cyclic tree dual to an amalgamated
free product.  By conjugating the basis $\CT_1$ (so that it remains a
basis relative to $T_1$ and so that the associated Dehn twist automorphism
defines the same outer automorphism class), we can assume that $c_1$
is cyclically reduced with respect to $\CT_2$.  Moreover, if $c_1$
does not fix a point in $T_2$, then by further conjugating we can
assume that, as a reduced word in $\CT_2$, the element $c_1$ has the
form:
\begin{equation}\label{eq:reduced-amal}
  c_1 = x_1c_2^{i_1}y_1c_2^{j_1} \cdots x_mc_2^{i_m}y_mc_2^{j_m}
\end{equation}
where for $r =1,\ldots,m$, the word $y_r$ is a nontrivial word in
$\CB_2$ and the word $x_r$ is a nontrivial word in $\CA_2$ such that
$zx_r$ and $x_rz$ are reduced for $z = c_2,c_2^{-1}$.  (This last
statement requires the adjective very small.)  Thus $|c_1^n |_{\CT_2}
= n|c_1|_{\CT_2}$ and $\ell_{T_2}(c_1^n) = 2mn$.

Now suppose that $T_2$ is a very small cyclic tree for an
HNN-extension.  Again by conjugating the basis $\CT_1$, we can assume
that $c_1$ is cyclically reduced with respect to $\CT_2$.  Moreover,
if $c_1$ does not fix a point in $T_2$, then by further conjugating,
we can assume that as a reduced word in $\CT_2$, the element $c_1$ has
the form:
\begin{equation*}\label{eq:reduced-HNN}
  c_1 = x_1(c_2^{i_1}t_0)^{\epsilon_1}x_2(c_2^{i_2}t_0)^{\epsilon_2} \cdots 
  x_m(c_2^{i_m}t_0)^{\epsilon_m}  
\end{equation*}
where for $r = 1, \ldots,m$, the word $x_r$ is a (possibly trivial)
word in $\CA_2 \cup \{t_0^{-1}c_2t_0 \}$, where $\epsilon_r \in \{ \pm
1\}$; and if $\epsilon_r = 1$, then $x_rz$ is a reduced word for $z =
c_2,c_2^{-1}$, and if $\epsilon_r = -1$, then $zx_{r+1}$ is a reduced
word for $z = c_2,c_2^{-1}$, where the subscript is considered modulo
$m$.  (Again, this last statement requires the adjective very small.)
Thus $|c_1^n|_{\CT_2} = n|c_1|_{\CT_2}$ and $\ell_{T_2}(c_1^n) = mn$.

In either of two above cases, we say that $c_1$ is
\emph{$T_2$--reduced}.  For the remainder of this section, we will
always assume that $c_1$ is $T_2$--reduced.

Let $\alpha^n_{\Lambda_2} = [0,1]$ be the interval subdivided into
$|c_1^n|_{\CT_2}$ edges.  There is a map $\alpha^n_{\Lambda_2} \to
\CG_2^{\langle c_1^n \rangle} \to \Lambda_2$, where the first map
identifies the endpoints of $\alpha^n_{\Lambda_2}$, and the second map
is the immersion of the core graph whose image represents the
conjugacy class of $c_1^n$.  As $c_1$ is cyclically reduced with
respect to $\CT_2$, no folding takes place after identifying the
vertices of $\alpha^n_{\Lambda_2}$.  Also, as $c_1$ is $T_2$--reduced,
essential chains and essential vertices relative to the basis $\CT_2$
can be considered as subsets of $\alpha^n_{\Lambda_2}$.  These
essential chains and essential vertices are referred to as {\it
  essential pieces} (relative to $\CT_2$).

We say that an essential piece in $\alpha^n_{\Lambda_2}$ is {\it safe}
if the vertex or chain does not intersect a vertex of one of the
extremal $BCC(\CT_1, \CT_2)$ edges of $\alpha^n_{\Lambda_2}$.  It is
clear that at most $2BCC(\CT_1, \CT_2) + 2$ essential pieces in
$\alpha^n_{\Lambda_2}$ are not safe. 

\begin{ex}\label{ex:safe}
  Let $T_2$ be the cyclic tree dual to the splitting $F_3 = \la a,c
  \ra *_{\la c \ra} \la c,b \ra$.  Suppose $T_1$ is another cyclic
  tree such that $c_1 = ababac^3b$ (this is $T_2$--reduced) and
  $BCC(\CT_1,\CT_2) = 3$.  The segment $\alpha^1_{\Lambda_2}$ is shown
  in Figure \ref{fig:chain2}.  The only safe essential piece is the
  fifth from the left essential vertex.

\begin{figure}[t]
  \psfrag{s}{safe} 
  \centering
  \includegraphics{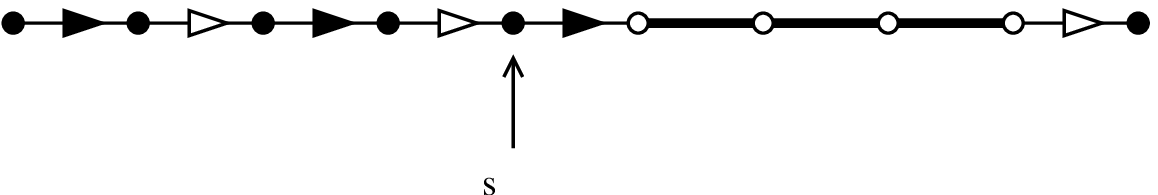}
  \caption{The segment $\alpha^1_{\Lambda_2}$ for $T_2$ in Example
    \ref{ex:safe}.  The black arrows are sent to the petal
    corresponding to ``$a$'', white arrows to ``$b$'' and the thick
    line without arrows represents an essential chain.  Essential
    vertices are black.}
  \label{fig:chain2}
\end{figure}
\end{ex}

Consider an immersion of a core graph $\rho\co \CG^H_1 \to \Lambda_1$.
The image of a chain $\alpha = (\gamma_1,\ldots,\gamma_\ell) \in
\alpha(\CG^H_1)$ in $(\CG_1^H)_{\Lambda_2}$, the graph composition of
$\CG^H_1$ using $\nu \co \Lambda_1 \to \Lambda_2$, is naturally
identified with a copy of the segment $\alpha^\ell_{\Lambda_2}$.

To obtain the inequality of Theorem \ref{th:B}, we determine the
number safe pieces resulting from twisting which contribute to new
volume.  Upon twisting, safe essential pieces might get folded with
surgered segments and then pruned. We account for these pruned safe
pieces by showing that they must contribute to the original free
volume of $H$ with respect to $T_2$. This is the use of the
following proposition.

\begin{prop}\label{prop:safe}
  Suppose that $H$ is a finitely generated malnormal or cyclic
  subgroup of $F_k$ where $\rank(H) \leq R$.  Then there is an $M =
  M(R)$, such that given $\rho\co \CG_1^H \to \Lambda_1$, an immersion
  of the core graph $\CG^H_1$, we have:
  \begin{eqnarray}\label{eq:safe}
    \sum_{(\gamma_1, \ldots, \gamma_\ell) \in \alpha(\CG^H_1)} \#\mbox{safe essential pieces in } \alpha^\ell_{\Lambda_2} \leq M(\vol_{T_2}(H) + 1).
  \end{eqnarray}
\end{prop}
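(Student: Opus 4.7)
The plan is to match safe essential pieces in the chain segments of $(\CG_1^H)_{\Lambda_2}$ with essential pieces of the core graph $\CG_2^H$ for $H$ relative to $\CT_2$. By Lemma \ref{composition}, graph composition of $\CG_1^H$ along the change-of-marking map $\nu \co \Lambda_1 \to \Lambda_2$ followed by folding and pruning produces an immersion $\rho_2^H \co \CG_2^H \to \Lambda_2$. My bound proceeds by showing that most safe pieces descend injectively into the essential pieces of $\CG_2^H$, and that the exceptional ones number at most $O(R)$.

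First I would show that safe essential pieces descend to essential pieces of $\CG_2^H$. Since $c_1$ is $T_2$-reduced, no fold can occur in the interior of any chain segment $\alpha^\ell_{\Lambda_2}$. Folds may only originate where $\alpha^\ell_{\Lambda_2}$ meets a non-chain edge of $(\CG_1^H)_{\Lambda_2}$, namely at one of its two endpoints (the chain endpoints in $\CG_1^H$) or at an \emph{internal branching vertex}, i.e.\ a non-chain vertex of some $\gamma_i$ at which a non-chain edge of $\CG_1^H$ attaches. Because $\CG_1^H$ is immersed in $\Lambda_1$, the word read along the chain segment and the word read along any adjacent edge are mutually $\CT_1$-reduced; Lemma \ref{lem:BCC} then bounds the propagation of each such fold into the segment by $BCC(\CT_1, \CT_2)$ edges. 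A safe piece already lies beyond the $BCC(\CT_1, \CT_2)$-neighborhood of both segment endpoints; if it additionally lies outside the $BCC(\CT_1, \CT_2)$-neighborhood of every internal branching, it is untouched by folding and descends to an essential piece of $\CG_2^H$ relative to $\CT_2$. This correspondence is injective because distinct chains in $\CG_1^H$ share only endpoint vertices (by the immersiveness argument: two chains sharing a $c_1$-direction would be forced to coincide), so their chain segments have pairwise disjoint interiors in $(\CG_1^H)_{\Lambda_2}$.

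Second, by Theorems \ref{prop:amalgam-free-volume} and \ref{prop:hnn-free-volume}, the essential pieces of $\CG_2^H$ correspond exactly to the edges of $T_2^H/H$ with trivial stabilizer, giving $\vol_{T_2}(H)$ in total. By Lemma \ref{lm:malnormal-freevolume} (for malnormal $H$) together with a direct check (for cyclic $H$), $T_2^H/H$ carries at most one edge with nontrivial stabilizer, which may account for at most one additional essential piece appearing as a non-simply-connected chain. So $\CG_2^H$ has at most $\vol_{T_2}(H) + 1$ essential pieces.

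Third, I would bound the safe pieces that are \emph{not} preserved: every such piece must lie within $BCC(\CT_1, \CT_2)$ edges of some internal branching vertex of $\CG_1^H$. Each internal branching vertex has valence at least three in $\CG_1^H$, and the Euler characteristic bound $\chi(\CG_1^H) = 1 - \rank H \geq 1 - R$ gives at most $2(R-1)$ such vertices. At most a constant number of safe pieces (depending only on $BCC(\CT_1, \CT_2)$ and $|c_1|_{\CT_2}$) is lost near each. Combining with the second step yields the required inequality with $M = M(R)$, where $M$ absorbs both the $O(R)$ lost safe pieces and the $+1$ from the single nontrivial-stabilizer edge. The main obstacle is tracking the fold propagation from internal branchings in this third step; bounded cancellation cleanly handles folds from segment endpoints, but folds initiated by branches attached to the interior of $\gamma_i$'s require a careful invocation of Lemma \ref{lem:BCC} against the $\CT_1$-distinct outgoing directions at each branching, together with the rank-based count of such branchings.
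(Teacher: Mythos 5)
Your argument is correct in outline and rests on the same three pillars as the paper's proof: Lemma \ref{lm:malnormal-freevolume} bounds the essential pieces of $\CG_2^H$ by $\vol_{T_2}(H)+1$; bounded cancellation (Lemma \ref{lem:BCC}) confines folding of $(\CG_1^H)_{\Lambda_2} \to \Lambda_2$ to a bounded neighborhood of the vertices of $\CG_1^H$; and the rank bound controls the exceptional behavior. Where you genuinely diverge is in how the rank enters. The paper discards nothing: it shows that \emph{every} safe piece survives into some essential piece of $\CG_2^H$, and then bounds the \emph{multiplicity} of this assignment by noting that two vertices identified under folding are joined by a reduced path of at most $2BCC(\CT_1,\CT_2)$ edges which, for a safe vertex, must pass through a vertex of valence at least three; the number of such paths in a graph of rank at most $R$ is the constant $M$, so the rank lives in the multiplicative constant. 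You instead excise the safe pieces near internal branch points, bound them via the Euler-characteristic count of branch points, and claim injectivity for the remainder, pushing the rank into the additive constant --- which the form $M(\vol_{T_2}(H)+1)$ absorbs equally well. Three small repairs are needed for your version. First, the exclusion radius must be $2BCC(\CT_1,\CT_2)$, not $BCC(\CT_1,\CT_2)$: identification in $\CG_2^H$ only requires a null-homotopic reduced path of length at most $2BCC(\CT_1,\CT_2)$, so a vertex at distance $BCC(\CT_1,\CT_2)+1$ from a branch point can still be identified with one at distance $BCC(\CT_1,\CT_2)-1$ on the far side, and your injectivity claim fails at radius $BCC(\CT_1,\CT_2)$. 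Second, a single branch point may carry up to $R$ chain strands, so your ``constant lost per branch point'' should be a constant per incident strand; the total is still bounded in terms of $R$. Third, distinct maximal chains need not have disjoint interiors: they can overlap along a short subsegment when $c_1$ recurs with a phase shift (indivisibility of $c_1$ bounds such an overlap by roughly $2|c_1|_{\CT_1}$ edges), so a bounded number of safe pieces may be counted twice; again this is absorbed into the constant.
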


\begin{proof} 

  Recall that $\CG_2^H$ is the core graph obtained from folding and
  pruning the graph $\rho_{\Lambda_2}\co (\CG_1^H)_{\Lambda_2} \to
  \Lambda_2$ obtained in Lemma \ref{composition}. As the subgroup $H$
  is malnormal or cyclic, it follows from Lemma
  \ref{lm:malnormal-freevolume} that the number of edges in the graph
  $T_2^H/H$ exceeds $\vol_{T_2}(H)$ by at most one.  Therefore from
  the discussions in Sections \ref{ssc:amalgamated} and \ref{ssc:HNN},
  it follows that there is at most one essential chain of $\CG_2^H$
  relative to $\CT_2$ which is not simply connected and hence
  does not contribute to $\vol_{T_2}(H)$. We will thus demonstrate
  \eqref{eq:safe} by showing that the sum on the left hand side of the
  inequality is less than $M$ times the number of essential chains and
  vertices in $\CG^H_2$.

  Let $\alpha = (\gamma_1,\ldots,\gamma_\ell)$ be a chain in
  $\CG^H_1$.  As $c_1$ is $T_2$--reduced and by bounded cancellation,
  any safe essential piece in $\alpha^\ell_{\Lambda_2}$ survives as a
  subset after folding and pruning $(\CG_1^H)_{\Lambda_2}$ to get
  $\CG_2^H$.  What needs to be shown is that any such safe
  essential piece of a chain in $\CG_1^H$ is part of an essential
  piece of $\CG^H_2$ and that over all chains in $\alpha(\CG^H_1)$,
  only boundedly many safe pieces are combined into the same essential
  vertex or chain.

  If $|c_1^\ell|_{\CT_2} \leq 2BCC(\CT_1,\CT_2)+2$ then there are no safe
  pieces in $\alpha^\ell_{\Lambda_2}$.  Otherwise, decompose the segment
  $\alpha^\ell_{\Lambda_2}$ as $xe_1ye_2z$, where 
  \[ |x|_{\CT_2} = |z|_{\CT_2} = BCC(\CT_1,\CT_2)\] and $e_1$ and
  $e_2$ are single edges.  Thus any safe essential pieces of
  $\alpha^\ell_{\Lambda_2}$ is contained in $y$ and the segment
  $e_1ye_2$ survives folding (although some of its vertices and edges
  may be identified).
 
  Consider an essential vertex $v$ in
  $\alpha^\ell_{\Lambda_2}$.  Thus $v$ is adjacent to an $\CA_2$--edge 
  of $e_1ye_2$ not labeled $c_2$, as well as a $\CB_2$--edge (a positively
  oriented $\{t_0\}$--edge in the case when $T_1$ is dual to an
  HNN-extension) of $e_1ye_2$.  Hence, as these edges remain after
  folding and pruning, $v$ is an essential vertex in $\CG^H_2$ unless
  it is part of a chain.  Such a chain could not cross either of the
  edges of $e_1ye_2$ that are adjacent to $v$.  Thus such a chain is
  necessarily essential due to the edges in $e_1ye_2$ adjacent
  to $v$.  Similarly, an essential chain in $\alpha^\ell_{\Lambda_2}$ is
  part of an essential chain in $\CG^H_2$ (it may not be maximal in
  $\CG^H_2$).

  If $\rank(H)=1$, then $(\CG^H_1)_{\Lambda_2}$ is a circle and as such the
  segment $e_1ye_2$ is embedded in $\CG^H_2$ and essential vertices
  and chains of $e_1ye_2$ are not contained in a larger essential
  chain of $\CG^H_2$.  Hence, for $M = 1$, inequality \eqref{eq:safe}
  holds.

  Now suppose that $R \geq \rank(H) > 1$ and $v$ and $v'$ are vertices
  of $(\CG^H_1)_{\Lambda_2}$ that are identified in $\CG^H_2$, where
  $v$ is contained in an essential safe piece arising from $\alpha \in
  \alpha(\CG^H_1)$.  There is an edge path $\beta$ in
  $(\CG^H_1)_{\Lambda_2}$ connecting $v$ to $v'$ which is folded.
  Notice, the number of edges of $\beta$ is bounded by
  $2BCC(\CT_1,\CT_2)$.  As $v$ is not in the extremal
  $BCC(\CT_1,\CT_2)$ edges of $\alpha^\ell_{\Lambda_2}$ the path
  $\beta$ does not contain a component of $\alpha^\ell_{\Lambda_2} -
  \{ v \}$ and therefore intersects a vertex of valence at least three
  in $(\CG^H_1)_{\Lambda_2}$.  For any $R$, there are boundedly many
  paths in a graph of rank at most $R$ with at most
  $2BCC(\CT_1,\CT_2)$ edges that contain a vertex of valence at least
  3.  Thus over all chains $\alpha \in \alpha(\CG_1^H)$, only
  boundedly many safe essential pieces of $\alpha^\ell_{\Lambda_2}$
  are contained in a given essential vertex of chain of $\CG_2^H$ after
  folding.  Taking $M$ to be this bound, inequality \eqref{eq:safe}
  holds.
\end{proof}

\subsection{Linear growth}\label{ssc:linear}

We can now prove our theorem giving a linear lower bound on the free
volume of a finitely generated malnormal or cyclic subgroup after
iterating a Dehn twist.  Although it is not needed here, we also prove the (easier) linear upper bound; this upper bound in applied in \cite{Clay-Pettet}.


\begin{thm}\label{th:B}
  Let $\delta_1$ be a Dehn twist associated to the very small
  cyclic tree $T_1$ with edge stabilizers generated by conjugates of
  the element $c_1$ and let $T_2$ be any other very small cyclic tree.
  Then there exists a constant $C = C(T_1,T_2)$ such that for any
  finitely generated malnormal or cyclic subgroup $H \subseteq F_k$
  with $\rank(H) \leq R$ and $n \geq 0$ the following hold:
  \begin{align}
  \vol_{T_2}(\delta_1^{\pm n}(H)) &\geq 
  \vol_{T_1}(H) \bigl( n\ell_{T_2}(c_1) - C \bigr) - 
  M\vol_{T_2}(H)\label{eq:lowerbound} \\
  \vol_{T_2}(\delta_1^{\pm n}(H)) &\leq 
  \vol_{T_1}(H) \bigl( n\ell_{T_2}(c_1) + C \bigr) + 
  M\vol_{T_2}(H)\label{eq:upperbound}
\end{align}
where $M$ is the constant from Proposition \ref{prop:safe}.
\end{thm}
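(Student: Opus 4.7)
The plan is to combine graph surgery from Section \ref{ssc:surgery} with the essential piece count of Theorems \ref{prop:amalgam-free-volume} and \ref{prop:hnn-free-volume} and the bounded cancellation estimates of Proposition \ref{prop:safe}. Since conjugating the basis $\CT_1$ does not alter the outer automorphism class of $\delta_1$, I would first arrange that $c_1$ is $T_2$-reduced, so that $\ell_{T_2}(c_1^n) = n\ell_{T_2}(c_1)$ and each segment $\alpha^n_{\Lambda_2}$ admits a clean decomposition into essential pieces relative to $\CT_2$. It then suffices to treat $n \geq 0$ with $\delta_1$, since $\delta_1^{-n}$ is handled symmetrically by inserting $\bar{a}_n$ in place of $a_n$.

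Starting from an immersion $\rho_1^H\co \CG_1^H \to \Lambda_1$ of the core graph for $H$, I would perform graph surgery, inserting an $a_n$-interval at each crossing vertex of $\CG_1^H$ to obtain $\rho_1\co \Upsilon_n^H \to \Lambda_1$, and then, by Corollary \ref{co:surgery}, fold and prune $\nu \circ \rho_1\co \Upsilon_n^H \to \Lambda_2$ to produce an immersion $\rho_2^{\delta_1^n(H)}\co \CG_2^{\delta_1^n(H)} \to \Lambda_2$ of the core graph for $\delta_1^n(H)$. A preliminary combinatorial check would show that the number of crossing vertices of $\CG_1^H$ is at least $\vol_{T_1}(H)$: each essential vertex is a crossing vertex, and to each simply connected essential chain $\alpha$ one can associate a distinct crossing vertex, either at a $\CV(\alpha)$-vertex adjacent to a $\CB_0$-edge (or positively oriented $\{t_0\}$-edge in the HNN case), or else at an essential interior vertex of some $\gamma_i$ along the chain.

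Since $c_1$ is $T_2$-reduced, each inserted $a_n$ maps under $\nu$ to a copy of $\alpha^n_{\Lambda_2}$ containing exactly $n\ell_{T_2}(c_1)$ essential pieces relative to $\CT_2$, of which all but at most $2BCC(\CT_1,\CT_2)+2$ are safe; bounded cancellation guarantees that each safe piece survives folding and pruning as (a subset of) some essential piece of $\CG_2^{\delta_1^n(H)}$. To deduce \eqref{eq:lowerbound} I would then control two sources of loss: safe pieces from different inserted segments merging with each other under folding, and safe pieces being absorbed into essential pieces already present from chains of $\CG_1^H$. The first is bounded, using the rank hypothesis $\rank(H) \leq R$ exactly as in the proof of Proposition \ref{prop:safe}, by at most $M$ safe pieces per essential piece of $\CG_2^{\delta_1^n(H)}$; the second is bounded by $M(\vol_{T_2}(H)+1)$ directly via Proposition \ref{prop:safe} applied to the chains in $\alpha(\CG_1^H)$. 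Multiplying the count per insertion by the lower bound on the number of crossing vertices, and subtracting the two loss terms, yields \eqref{eq:lowerbound} with $C = C(T_1,T_2)$ absorbing $BCC(\CT_1,\CT_2)$ and the factor $M$.

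For the upper bound \eqref{eq:upperbound} I would argue dually: every essential piece of $\CG_2^{\delta_1^n(H)}$ is the image under the folding map of either an essential piece in some inserted $\alpha^n_{\Lambda_2}$, of which there are at most $n\ell_{T_2}(c_1) + O(1)$ per insertion and hence at most $\vol_{T_1}(H)(n\ell_{T_2}(c_1) + C)$ in total, or an essential piece of a chain of $\CG_1^H$ mapped under $\nu$, contributing at most $M(\vol_{T_2}(H)+1)$ by Proposition \ref{prop:safe}. Summing these two contributions gives \eqref{eq:upperbound}. The hardest step will be the merging bookkeeping for the lower bound: safe pieces at adjacent crossing vertices in $\Upsilon_n^H$ can fold together after composition with $\nu$, and one must adapt the rank-dependent counting argument of Proposition \ref{prop:safe} to the post-surgery graph, verifying that only $O_R(1)$ such identifications occur per essential piece of the final core graph.
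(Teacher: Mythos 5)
Your overall strategy coincides with the paper's: reduce to $c_1$ being $T_2$--reduced, surger in a copy of $a_n$ at each crossing vertex (at least $\vol_{T_1}(H)$ of them), count safe essential pieces in each resulting copy of $\alpha^n_{\Lambda_2}$, and use Proposition \ref{prop:safe} to control the safe pieces that get absorbed into pre-existing chains of $\CG_1^H$. The upper bound sketch is also in the spirit of the paper's.

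There is, however, a genuine gap in your treatment of the step you yourself flag as the hardest one. You propose to bound the merging of safe pieces coming from \emph{distinct} surgered segments by ``at most $M$ safe pieces per essential piece of $\CG_2^{\delta_1^n(H)}$,'' adapting the rank-dependent count from Proposition \ref{prop:safe}, and then to absorb ``the factor $M$'' into $C$. This cannot work: a per-essential-piece bound of $M$ is multiplicative, not additive. If each essential piece of the final core graph could contain up to $M$ surviving safe pieces, you would only conclude that the number of essential pieces is at least $\tfrac{1}{M+1}$ times the number of surviving safe pieces, i.e.\ a main term of order $\tfrac{1}{M+1}\vol_{T_1}(H)\,n\ell_{T_2}(c_1)$, which is not of the form $\vol_{T_1}(H)\bigl(n\ell_{T_2}(c_1)-C\bigr)$ for any additive constant $C$. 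The correct resolution, and the one the paper uses, is to show that such identifications essentially do not occur: because $c_1$ is indivisible and cyclically reduced with respect to $\CT_1$, the surgered map $\rho_1\co \Upsilon_n^H \to \Lambda_1$ is an immersion except possibly at the \emph{initial} vertices of the inserted arcs, so each arc retains an embedded terminal subsegment $a_{n-1}$ (only the first $|c_1|_{\CT_1}$ edges, or an initial portion folded onto the adjacent chain $\alpha^m_{\Lambda_2}$, can be lost); after graph composition with $\nu$, bounded cancellation confines all further folding to the extremal $B = BCC(\CT_1,\CT_2)$ edges of each $\alpha^n_{\Lambda_2}$. Hence the loss per inserted segment is an additive $O(B)$, the safe pieces identified with the pre-existing chains are exactly those charged to $M(\vol_{T_2}(H)+1)$ via Proposition \ref{prop:safe}, and distinct segments contribute disjoint essential pieces to $\CG_2^{\delta_1^n(H)}$. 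Without this immersion argument (which is where the hypotheses that $T_1$ is very small and $c_1$ is indivisible enter), your bookkeeping does not yield the stated inequality.
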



\begin{proof}
  We will only show this for $\delta_1^n$; it will then be clear how to
  modify the argument for $\delta_1^{-n}$.

  Recall that $\CT_1 = \CA_1 \cup \CB_1$ and $\CT_2 = \CA_2 \cup
  \CB_2$ are bases for $F_k$ relative to the trees $T_1$ and $T_2$
  respectively, that $\nu\co \Lambda_1 \to \Lambda_2$ is a homotopy
  equivalence representing the change in marking and $\Lambda_1$ and
  $\Lambda_2$ are the $k$--petaled roses marked by $\CT_1$ and $ \CT_2$
  respectively.  Let $B = BCC(\CT_1, \CT_2)$ denote the bounded
  cancellation constant with respect to these bases.  Finally, let
  $\rho\co \CG^H_1 \to \Lambda_1$ be an immersion of a core graph for
  $H$.  We will first prove \eqref{eq:lowerbound}.

  If $\ell_{T_2}(c_1) = 0$ there is nothing to prove.  Otherwise,
  after replacing $\CT_1$ by a conjugate (replacing $\Lambda_1$ and
  $B$ accordingly) we can assume that $c_1$ is $T_2$--reduced.  We can
  assume that $C$ is large enough so that if
  $n\ell_{T_2}(c_1) \geq C$ then the segment $\alpha^{n-1}_{\Lambda_2}$
  contains a safe essential chain or vertex.  Notice that the number
  of safe essential pieces in $\alpha^n_{\Lambda_2}$ is at least
  $n\ell_{T_2}(c_1) - (2B + 2)$.

  Let $\Upsilon_n^H$ be the graph obtained from graph surgery on the
  core graph $\CG^H_1$ along $T_1$ equipped with the map $\rho_1\co
  \Upsilon_n^H \to \Lambda_1$.  Notice that at least $\vol_{T_1}(H)$
  segments $a_n$ have been added to $\CG_1^H$, as every essential
  piece contains at least one crossing vertex.  Further notice that
  since $c_1$ is indivisible and cyclically reduced with respect to
  $\CT_1$, the map $\rho_1\co \Upsilon_n^H \to \Lambda_1$ is an
  immersion, except possibly at an initial vertex of one of the
  surgered segments $a_n$.

  By Corollary \ref{co:surgery} the map $(\nu \circ
  \rho_1)_{\Lambda_2}\co (\Upsilon_n^H)_{\Lambda_2} \to \Lambda_2$
  folds to an immersion, which by pruning results in the immersion of
  the core graph $\rho_2^{\delta_1^n(H)}\co \CG_2^{\delta_1^n(H)} \to
  \Lambda_2$.  The image of each of the surgered segments $a_n$ in
  $(\Upsilon_n^H)_{\Lambda_2}$ is a copy of $\alpha^n_{\Lambda_2}$.
  We need to bound the number of essential chains and vertices
  belonging to copies of the segment $\alpha^n_{\Lambda_2}$ in
  $\Upsilon^H_{\Lambda_2}$ which get pruned. As the order in which
  folding occurs to arrive at $\CG^H_2$ does not matter, we will focus
  on a single surgered segment $a_n$ and its associated copy of
  $\alpha^n_{\Lambda_2}$ in $\Upsilon^H_{\Lambda_2}$.  Therefore, we
  assume that the only places where the map
  $(\Upsilon_n^H)_{\Lambda_2} \to \Lambda_2$ is not an immersion are
  the initial and terminal vertices of this copy of
  $\alpha^n_{\Lambda_2}$.

  If $a_n$ is surgered in at an essential vertex, then $\rho_1\co
  \Upsilon_n^H \to \Lambda_1$ folds at most the first $|c_1|_{\CT_1}$
  edges of $a_n$, as any additional folding would imply the presence
  of an essential maximal chain adjacent to the essential vertex (this
  uses the fact that $c_1$ is indivisible in $F_k$).  In particular,
  the terminal subsegment $a_{n-1}$ survives.  Hence after graph
  composition using $\nu$, at most the extremal $B$ edges of the
  corresponding copy of $\alpha^{n-1}_{\Lambda_2}$ are pruned.  As no
  other edges of $\CG^H_2$ intersect the remaining segment of
  $\alpha^{n-1}_{\Lambda_2}$ all safe pieces of $\alpha^n_{\Lambda_2}$
  are safe pieces of $\CG^{\delta_1^n(H)}_2$.

  Now suppose the crossing vertex is not an essential vertex. Hence
  there is an essential chain $\alpha = (\gamma_1,\ldots,\gamma_m)$
  such that either the crossing vertex is $\gamma_i(0)$ for some $i$, or the
  crossing vertex is $\gamma_m(1)$.  Without loss of generality (as we
  are only measuring the contribution of this essential chain), we can
  assume that the crossing vertex is rightmost along the chain.  If it
  is $\gamma_m(1)$, then as in the proceeding paragraph the map
  $\rho_1\co \Upsilon_n^H \to \Lambda_1$ folds at most the first
  $|c_1|_{\CT_1}$ edges of $a_n$ and all safe pieces of the remaining
  $\alpha^{n-1}_{\Lambda_2}$ are essential vertices or chains of
  $\CG^{\delta_1^n(H)}_2$.

  Otherwise the crossing vertex is $\gamma_i(0)$ for some $i$.  Then
  $\Upsilon_n^H \to \Lambda_1$ will fold more the the initial
  $|c_1|_{\CT_1}$ at the initial vertex of $a_n$.  Here we claim that
  at most $2B+2$ safe pieces of $\alpha^n_{\Lambda_2}$ that are folded
  and pruned are not first identified with a safe essential piece of
  $\alpha$.

  If $n \leq m - i$ then in $\Upsilon_n^H$, the entire segment
  $\alpha^n_{\Lambda_2}$ can be folded onto $\alpha^m_{\Lambda_2}$,
  identifying safe pieces of $\alpha^n_{\Lambda_2}$ with safe pieces
  of $\alpha^m_{\Lambda_2}$; such pieces may then be pruned in forming
  $\CG^{\delta_1^n(H)}_2$.  If $n > m - i$, then the terminal
  $\alpha^{m - i}_{\Lambda_2}$ segment of $\alpha^m_{\Lambda_2}$ can
  be folded onto $\alpha^n_{\Lambda_2}$.  When folding, safe pieces in
  an initial segment of $\alpha^n_{\Lambda_2}$ are identified with
  safe pieces of $\alpha^m_{\Lambda_2}$.  However some safe pieces of
  $\alpha^n_{\Lambda_2}$ are identified with non-safe pieces of
  $\alpha^m_{\Lambda_2}$ coming from essential pieces of
  $\alpha^m_{\Lambda_2}$ intersecting along the terminal $B+1$ edges of
  $\alpha^m_{\Lambda_2}$.  Thus the number of such safe pieces of
  $\alpha^n_{\Lambda_2}$ identified with non-safe pieces of
  $\alpha^m_{\Lambda_2}$ is bounded by $B+1$.  There may need to be
  additional folding at the terminal vertex of $\alpha^m_{\Lambda_2}$; however the amount of folding is bounded.  Indeed as $\alpha$ is
  maximal, at the terminal vertex $\alpha$ in $\CG^H_1$, we have to
  fold at most an additional $|c_1|_{\CT_1}$ edges.  Thus after
  folding the initial portion of $a_n$ over $\alpha$ and possibly at
  most $|c_1|_{\CT_1}$ edges, the induced map is an immersion at this
  vertex and hence at most $B$ of the initial edges in the terminal
  $\alpha^{n-m-1}_{\Lambda_2}$ segment of $\alpha^n_{\Lambda_2}$ are
  folded with other edges adjacent to this vertex. Therefore at most
  an additional $B$ edges are pruned, eliminating at most an
  additional $B+1$ safe pieces from $\alpha^n_{\Lambda_2}$.  This
  proves our claim.  See Figure \ref{fig:linear}.

  \begin{figure}[t]
    \centering
    \psfrag{f}{fold}
    \psfrag{a}{$\alpha^n_{\Lambda_2}$}
    \psfrag{b}{$\alpha^m_{\Lambda_2}$}
    \includegraphics{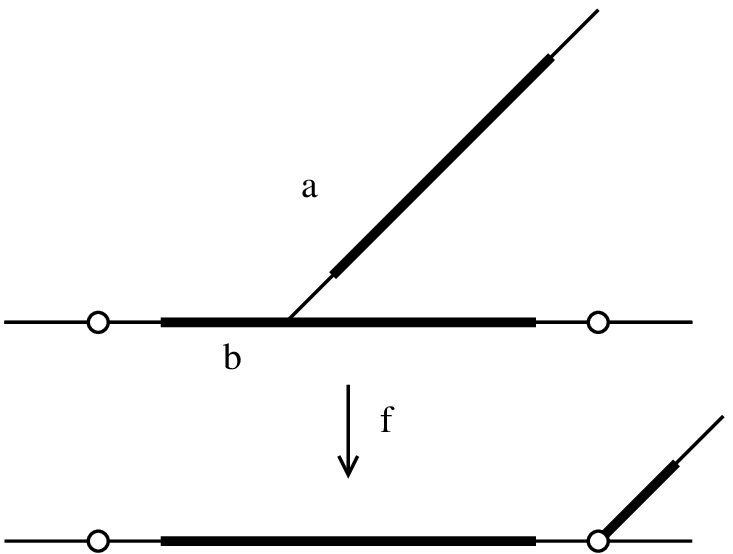}
    \caption{Folding the initial part of the surgered segments
      $\alpha^n_{\Lambda_2}$ to $\Upsilon^H_{\Lambda_2}$.  The safe
      pieces are contained in the thickened edges.  At most $B$ more
      edges of $\alpha^n_{\Lambda_2}$ need to be folded after this
      initial fold.}
    \label{fig:linear}
  \end{figure}

  Putting this claim together with Proposition \ref{prop:safe} and
  summing up over all crossing vertices of $\CG^H_1$, we see that the
  number of essential pieces of $\CG^{\delta^n_1(H)}_2$ is bounded
  below by:
  \begin{equation}
    \label{eq:proof-linear}
    \vol_{T_1}(H)\bigl( n\ell_{T_2}(c_1) - (4B+4) \bigr) - 
    M(\vol_{T_2}(H) + 1)
  \end{equation}
  As $H$ is malnormal or cyclic, so is $\delta^n_1(H)$; hence at most
  one essential chain in $\CG^{\delta^n_1(H)}_2$ can be non-simply
  connected (Lemma \ref{lm:malnormal-freevolume}).  Thus
  $\vol_{T_2}(\delta_1^n(H))$ is bounded below by one less than
  \eqref{eq:proof-linear}. Thus for $C = 4B + M + 5$ the inequality
  \eqref{eq:lowerbound} holds.

  We will now sketch the proof for the upper bound
  \eqref{eq:upperbound}.  The idea is similar to the proof of
  \eqref{eq:lowerbound}.  Using the machinery developed above, we get
  an upper bound on the number of essential vertices and chains in
  $\CG_2^{\delta_1^n(H)}$ by looking at essential vertices and chains
  in $(\Upsilon_n^H)_{\Lambda_2}$.  When inserting a copy of $a_n$ at
  an essential vertex, this may prevent some folding that might have occurred originally in $(\CG_1^H)_{\Lambda_2} \to \Lambda_2$,
  causing some essential vertices and chains in $\CG_2^H$ to break into
  several essential vertices and chains in $\CG_2^{\delta_1^n(H)}$.
  This contribution to $\vol_{T_2}(\delta_1^n(H)$ is controlled by
  $M\vol_{T_1}(H)$.  Similar considerations apply at a crossing vertex
  of an essential chain.  At first glance, it may appear that the
  upper bound in \eqref{eq:upperbound} is too low as there may be
  several crossing vertices on a given essential chain.  However, it
  is easy to see that the contribution for all but the rightmost
  crossing vertex is folded onto to an essential vertex or chain that
  is already counted.  Hence \eqref{eq:upperbound} holds.
\end{proof}

It is likely that Theorem \ref{th:B} holds for ``multi-twists,'' i.e.,
products of Dehn twists arising from a single graph of groups
decomposition of $F_k$ with cyclic edge stabilizers.  This is the case
for surfaces, see \cite{Ivanov}.

\begin{ex}\label{ex:lb}
  We give an example that shows that the constant $C$ in
  \eqref{eq:lowerbound} is necessary.  Let $T_1$ be the cyclic tree for
  the splitting $F_3 = \la a,c \ra*_{\la c \ra} \la b,c \ra$ and $T_2
  = T_1\phi$ where $\phi$ is the outer automorphism of $F_3$
  represented by $a \mapsto b \mapsto c \mapsto ab$; in particular
  $\ell_{T_2}(c) = 2$.  For $g = ac^{-2}bc$ we have $\ell_{T_1}(g) =
  2$ and $\phi(g) = a^{-1}b^{-1}a^{-1}cab$ and hence $\ell_{T_2}(g) =
  4$.  Therefore, if $n = 2$ and $C = 0$, the right hand side of
  \eqref{eq:lowerbound} is $4$.  However, $\delta^2(g) = abc^{-1}$ and
  $\phi(\delta^2(g)) = bcb^{-1}a^{-1}$ and hence
  $\ell_{T_2}(\delta^2(g)) = 2$.  For the two bases $\CT_1 = \{ a,b,c
  \}$ and $\CT_2 = \{ ab,b,c \}$ the bounded cancellation constant
  $BCC(\CT_1,\CT_2)$ is $1$, and hence, from the proof of Theorem B, we
  see that we can choose $C = 10$.  Upon substituting, the right hand
  side of \eqref{eq:lowerbound} becomes $4n - 24$.  As $\phi(\delta^n(g)) = b(ab)^{n-2}c(ab)^{-(n-1)}$ is reduced for $n
  \geq 2$, we see that $\ell_{T_2}(\delta^n(g)) = 4n - 6$ for $n \geq
  2$.
\end{ex}

\section{Free factor ping pong}\label{sc:pingpong}

In this section we prove Theorem \ref{th:A}, using a variation on the familiar ping pong argument due to Hamidi-Tehrani . As the proof is
short, we include it here.

\begin{lem}[\cite{Hamidi-Tehrani}, Lemma 2.4]\label{pingpong}
  Let $G$ be a group generated by $g_1$ and $g_2$. Suppose that $G$
  acts on a set $\CX$, and that there is a function $| \cdot |: \CX
  \to \BR_{\geq 0}$ with the following properties: There are mutually
  disjoint subsets $\CX_1$ and $\CX_2$ of $\CX$ such that if $n>0$,
  then $g_i^{\pm n}(\CX - \CX_i) \subset \CX_i$, and for any $x \in
  \CX - \CX_i$ we have $|g_i^{\pm n}(x) | > |x|$. Then $G \cong F_2$,
  and the action on $\CX$ of every element $g \in G$ which is not
  conjugate to a power of some $g_i$ has no periodic points.
\end{lem}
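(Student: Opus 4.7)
The plan is to adapt the classical ping-pong argument, using the length function $|\cdot|$ to provide the extra control needed for the second conclusion. Both assertions will follow from one basic computation: for any reduced word $w = g_{i_1}^{n_1} \cdots g_{i_k}^{n_k}$ in $g_1,g_2$ and any $x \in \CX - \CX_{i_k}$, successive application of the hypothesis gives $g_{i_k}^{n_k}(x) \in \CX_{i_k} \subset \CX - \CX_{i_{k-1}}$, then $g_{i_{k-1}}^{n_{k-1}}g_{i_k}^{n_k}(x) \in \CX_{i_{k-1}} \subset \CX - \CX_{i_{k-2}}$, and so on, yielding $w(x) \in \CX_{i_1}$ together with $|w(x)| > |x|$.

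First I would prove $G \cong F_2$ by showing every nontrivial reduced word $w$ in $g_1,g_2$ acts nontrivially on $\CX$. Since triviality of the action is invariant under passage to a conjugate, I would replace $w$ by its cyclic reduction, which remains a nontrivial reduced word. If this cyclic reduction is a pure power $g_i^m$, then any $x \in \CX_j$ with $j \neq i$ satisfies $w(x) \in \CX_i$, a set disjoint from $\CX_j$, so $w(x) \neq x$. Otherwise the cyclic reduction alternates between the two generators with $i_1 \neq i_k$, and picking any $x \in \CX - \CX_{i_k}$ (for instance any $x \in \CX_{i_1}$) the iteration above gives $|w(x)| > |x|$, whence $w(x) \neq x$.

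For the second statement, let $g \in G$ not be conjugate to a power of $g_1$ or $g_2$. Periodic points are preserved under conjugation, so I would replace $g$ by its cyclic reduction; the hypothesis on $g$ forces this cyclic reduction to use both generators, since a cyclically reduced word in a single generator is already a power of that generator. Writing $g = g_{i_1}^{n_1}\cdots g_{i_k}^{n_k}$ with $i_1 \neq i_k$, for any $m \geq 1$ the power $W = g^m$ is again reduced and cyclically reduced with outer indices $i_1$ and $i_k$. Suppose $W(x) = x$. If $x \in \CX - \CX_{i_k}$, the iteration yields $|W(x)| > |x|$, a contradiction. Otherwise $x \in \CX_{i_k}$, hence $x \notin \CX_{i_1}$ by disjointness, so $x \in \CX - \CX_{i_1}$; applying the iteration to $W^{-1}$, whose last syllable lies in $g_{i_1}$, then gives $|W^{-1}(x)| > |x|$, contradicting $W^{-1}(x) = x$.

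The hard part — and the only place where the length function is genuinely needed rather than the bare inclusions — is precisely this last split. Without $|\cdot|$, the naive ping-pong conclusion $W(x) \in \CX_{i_1}$ does not by itself rule out $W(x) = x$ when $x$ already lies in $\CX_{i_1}$; the trick is to play $W$ against $W^{-1}$ according to whether $x$ sits in $\CX_{i_k}$ or not, so that in either case one gets a point on which the length function strictly grows.
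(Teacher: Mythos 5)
Your proof is correct and follows essentially the same route as the paper's: cyclically reduce, use the iteration $w(x)\in\CX_{i_1}$ with $|w(x)|>|x|$ for $x\in\CX-\CX_{i_k}$, and for the periodic-point claim split on whether $x\in\CX_{i_k}$, playing $w$ against $w^{-1}$ exactly as in the paper. Your explicit handling of the pure-power case in the freeness argument is a small clarification the paper glosses over, but the method is the same.
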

\begin{proof}
  A non-empty reduced word in $g_1$ and $g_2$ is conjugate to a
  reduced word $w = g_1^{\epsilon_1} \cdots g_1^{\epsilon_2}$, where
  $\epsilon_1$ and $\epsilon_2$ are non-zero integers. If $x \in \CX -
  \CX_1$, then $w(x) \in \CX_1$; therefore $w(x) \neq x$ and $w$ is
  not the identity. If an element of $G$ which is not conjugate to a
  power of $g_1$ or $g_2$ has a periodic point, then some power of it
  has a fixed point. This power is conjugate to a reduced word of the
  form $w = g_i^{\epsilon_i} \cdots g_j^{\epsilon_j}$, with $i \neq j$
  and $\epsilon_i, \epsilon_j$ non-zero integers.  If $x \in \CX -
  \CX_j$, then by assumption $| w(x) | > |x|$. On the other hand, if
  $x \in \CX_j$, then $w^{-1}(x) = g_j^{- \epsilon_j} \cdots g_i^{-
    \epsilon_i}(x)$ so that $| w^{-1}(x) | > |x|$.  Hence $w$ does not
  have any fixed points and therefore no element of $G$ not conjugate
  to a power of $g_1$ or $g_2$ has a periodic point.
\end{proof}

Let $T_1$ and $T_2$ be filling very small cyclic trees with edge
stabilizers generated by conjugacy classes of the elements $c_1$ and
$c_2$, respectively. Also let $\delta_1$ and $\delta_2$ be the
associated Dehn twists, $M = M(k-1)$ from Proposition \ref{prop:safe}
and $C$ the larger of the constants $C(T_1,T_2)$ and $C(T_2,T_1)$ from
Theorem \ref{th:B}.  We let $\CX$ be the set of conjugacy classes of
proper free factors and cyclic subgroups of $F_k$.  Since the trees
$T_1$ and $T_2$ fill we have:
\[ \vol_{T_1}(X) + \vol_{T_2}(X) > 0 \] for any $X \in \CX$.  Choose
an irrational number $\lambda$ ($\lambda$ will be end up being close
to 1) and define sets:
\begin{align*}
  \CX_1 &= \{ X \in \CX \ | \ \vol_{T_1}(X) < \lambda \vol_{T_2}(X) \}
  \text{ and} \\
  \CX_2 &= \{ X \in \CX \ | \ \vol_{T_2}(X) < \lambda^{-1}
  \vol_{T_1}(X) \}.
\end{align*}
Hence $\CX$ is the disjoint union of $\CX_1$ and $\CX_2$.  Finally,
we define a function $| \cdot |\co \CX \to \BR_{\geq 0}$ by:
$$|X| = \vol_{T_1}(X) + \vol_{T_2}(X)$$

We will now show that for some $N$ and $m,n \geq N$, the group $\la
\delta_1^m,\delta_2^n \ra$ satisfies Lemma \ref{pingpong} with the set
$\CX$ and function $| \cdot |\co \CX \to \BR_{\geq 0}$.  The proof is
the same as for Lemma 3.1 in \cite{Hamidi-Tehrani}.

\begin{lem}\label{lm:pingpong}
  With the above notation:
 \begin{enumerate}
 \item $\delta_1^{\pm n}(\CX_2) \subset \CX_1$ if
   $n\ell_{T_2}(c_1) - C \geq (M+1)\lambda^{-1}$.
 \item If $n\ell_{T_2}(c_1) - C \geq (M+1)\lambda^{-1}$ and
   $X \in \CX_2$, then $|\delta_1^{\pm n}(X)| > |X|$.
 \item $\delta_2^{\pm n}(\CX_1) \subset \CX_2$ if
   $n\ell_{T_1}(c_2) - C \geq (M+1)\lambda$.
 \item If $n\ell_{T_1}(c_2) - C \geq (M+1)\lambda$ and $X
   \in \CX_1$, then $|\delta_2^{\pm n}(X)| > |X|$.
 \end{enumerate}
\end{lem}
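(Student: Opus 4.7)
The plan is to deduce all four parts from two inputs: the lower bound of Theorem \ref{th:B}, and the fact that since $\delta_i$ preserves the action of $F_k$ on $T_i$ (via the equivariant isometry $h_{\delta_i} \co T_i \to T_i$), the equality $\vol_{T_i}(\delta_i^{\pm n}(X)) = \vol_{T_i}(X)$ holds for every finitely generated $X \subseteq F_k$. Since any $X \in \CX$ is either a proper free factor (hence malnormal, of rank at most $k-1$) or a cyclic subgroup, Theorem \ref{th:B} applies with the same constant $M = M(k-1)$ throughout. Parts (3) and (4) will be immediate from the same argument with the roles of $(T_1,\delta_1,c_1,\lambda^{-1})$ and $(T_2,\delta_2,c_2,\lambda)$ interchanged, so the work is to verify (1) and (2).

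For part (1), let $X \in \CX_2$, so $\vol_{T_2}(X) < \lambda^{-1}\vol_{T_1}(X)$; in particular $\vol_{T_1}(X) > 0$. Theorem \ref{th:B} and the invariance observation give
\[
\vol_{T_1}(\delta_1^{\pm n}(X)) = \vol_{T_1}(X), \qquad
\vol_{T_2}(\delta_1^{\pm n}(X)) \geq \vol_{T_1}(X)\bigl(n\ell_{T_2}(c_1) - C\bigr) - M\vol_{T_2}(X).
\]
The goal $\vol_{T_1}(\delta_1^{\pm n}(X)) < \lambda\,\vol_{T_2}(\delta_1^{\pm n}(X))$ then reduces, after replacing $\vol_{T_2}(X)$ by the upper estimate $\lambda^{-1}\vol_{T_1}(X)$ and dividing through by $\vol_{T_1}(X)$, to the numerical inequality $\lambda(n\ell_{T_2}(c_1)-C) \geq M+1$, which is exactly the hypothesis $n\ell_{T_2}(c_1) - C \geq (M+1)\lambda^{-1}$.

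For part (2), the same two inputs give
\[
|\delta_1^{\pm n}(X)| - |X| = \vol_{T_2}(\delta_1^{\pm n}(X)) - \vol_{T_2}(X) \geq \vol_{T_1}(X)\bigl(n\ell_{T_2}(c_1) - C\bigr) - (M+1)\vol_{T_2}(X).
\]
Using $\vol_{T_2}(X) < \lambda^{-1}\vol_{T_1}(X)$ on the subtracted term and the hypothesis $n\ell_{T_2}(c_1) - C \geq (M+1)\lambda^{-1}$ on the first term, the right-hand side strictly exceeds $\vol_{T_1}(X)\bigl[(M+1)\lambda^{-1} - (M+1)\lambda^{-1}\bigr] = 0$ because $\vol_{T_1}(X) > 0$ and the inequality defining $\CX_2$ is strict. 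Hence $|\delta_1^{\pm n}(X)| > |X|$.

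The argument is essentially bookkeeping, and the only real issue to check carefully is that the hypotheses of Theorem \ref{th:B} are satisfied (malnormality/cyclicity, rank bound, $T_1$ being very small), which is immediate from the setup. The constant $C$ has already been chosen to be the larger of $C(T_1,T_2)$ and $C(T_2,T_1)$ so that the symmetric computation for (3) and (4) uses the same $C$.
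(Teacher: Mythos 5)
Your proposal is correct and follows essentially the same route as the paper: both arguments feed the lower bound of Theorem \ref{th:B} and the invariance $\vol_{T_1}(\delta_1^{\pm n}(X)) = \vol_{T_1}(X)$ into the defining inequality of $\CX_2$, with the rank bound $k-1$ justifying the uniform constant $M$. Your version merely reorganizes the paper's chain of inequalities (e.g.\ computing $|\delta_1^{\pm n}(X)| - |X|$ directly in part (2)), which is a cosmetic difference.
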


\begin{proof}
  If $X \in \CX_2$, we have $\vol_{T_2}(X) <
  \lambda^{-1}\vol_{T_1}(X)$, and $\rank(X) \leq k-1$ and so by
  Theorem \ref{th:B}
\begin{align*} 
  \vol_{T_2}(\delta_1^{\pm n}(X)) &\geq \vol_{T_1}(X) \bigl(
  n\ell_{T_2}(c_1) - C\bigr) - M\vol_{T_2}(X) \\
  &> \vol_{T_1}(X) \bigl( n\ell_{T_2}(c_1) - C\bigr) -
  M\lambda^{-1} \vol_{T_1}(X) \\
  &= \vol_{T_1}(X) \bigl( n \ell_{T_2}(c_1)
  - C - M\lambda^{-1}\bigr) \\
  &= \vol_{T_1}(\delta_1^{\pm n}(X)) \bigl( n \ell_{T_2}(c_1)
  - C - M\lambda^{-1}\bigr) \\
  & \geq \lambda^{-1}\vol_{T_1}(\delta_1^{\pm n}(X))
\end{align*}
if $n\ell_{T_2}(c_1) - C \geq (M+1)\lambda^{-1}$, and hence
$\delta_1^{\pm n}(X) \in \CX_1$.  This shows $(1)$, and statement
$(3)$ is similar.  If $X \in \CX_2$, we have $\vol_{T_2}(X) <
\lambda^{-1}\vol_{T_1}(X)$, and $\rank(X) \leq k-1$, so again by
Theorem \ref{th:B}:
\begin{align*}
  |\delta_1^{\pm n}(X)| & = \vol_{T_1}(\delta_1^{\pm n}(X)) +
  \vol_{T_2}(\delta_1^{\pm n}(X)) \\
  & \geq \vol_{T_1}(X) + \vol_{T_1}(X)\bigl(n \ell_{T_2}(c_1)
  -  C \bigr) - M\vol_{T_2}(X) \\
  & > \vol_{T_1}(X)\bigl(n\ell_{T_2}(c_1) -
  C + 1\bigr) - M\lambda^{-1}\vol_{T_1}(X) \\
  &= \vol_{T_1}(X)\bigl(n\ell_{T_2}(c_1) - C + 1
  - M\lambda^{-1} \bigr) \\
  &\geq \vol_{T_1}(X)\bigl( 1 + \lambda^{-1}\bigr) \\
  & = \vol_{T_1}(X) + \lambda^{-1}\vol_{T_1}(X) \\
  & > \vol_{T_1}(X) + \vol_{T_2}(X) = |X|
\end{align*}
if $n\ell_{T_2}(c_1) - C \geq (M+1)\lambda^{-1}$.  This shows $(2)$,
and statement $(4)$ is similar.
\end{proof}

\noindent Equipped with this lemma, we are now ready to prove our main
result.

\begin{thm}\label{th:A}
  Let $\delta_1$ and $\delta_2$ be the Dehn twists of
  $F_k$ for two filling cyclic splittings of $F_k$. Then
  there exists $N = N(\delta_1,\delta_2)$ such that for $m,n >N$:
  \begin{enumerate} 
  \item $\la \delta_1^m, \delta_2^n \ra$ is isomorphic to the
    free group on two generators; and
  
  \item if $\phi \in \la \delta_1^m, \delta_2^n \ra$ is not conjugate
    to a power of either $\delta^m_1$ or $\delta^n_2$, then $\phi$ is
    a hyperbolic fully irreducible element of $\Out F_k$.
  \end{enumerate}
\end{thm}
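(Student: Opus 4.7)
The plan is to combine Lemma \ref{pingpong} with Lemma \ref{lm:pingpong} essentially verbatim. First I would fix an irrational $\lambda > 0$ (which will play no role other than avoiding borderline cases), take $R = k-1$ (the maximal rank of a proper free factor or cyclic subgroup of $F_k$), and let $M = M(k-1)$ from Proposition \ref{prop:safe} and $C = \max\{C(T_1,T_2),C(T_2,T_1)\}$ from Theorem \ref{th:B}. The set $\CX$ of conjugacy classes of proper free factors and cyclic subgroups, the decomposition $\CX = \CX_1 \sqcup \CX_2$, and the ``norm'' $|X| = \vol_{T_1}(X) + \vol_{T_2}(X)$ are already defined in the excerpt; the filling hypothesis (\textbf{F1}) guarantees $|X| > 0$ for every $X \in \CX$, and the irrationality of $\lambda$ guarantees that $\CX_1$ and $\CX_2$ are indeed disjoint and exhaust $\CX$.

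Next I would verify that $\ell_{T_2}(c_1) > 0$ and $\ell_{T_1}(c_2) > 0$. Applying (\textbf{F1}) to $X = \la c_1 \ra$ gives $\vol_{T_1}(\la c_1 \ra) + \vol_{T_2}(\la c_1 \ra) > 0$; since $c_1$ stabilizes an edge of $T_1$ we have $\vol_{T_1}(\la c_1 \ra) = \ell_{T_1}(c_1) = 0$, so $\ell_{T_2}(c_1) = \vol_{T_2}(\la c_1 \ra) > 0$. Symmetrically $\ell_{T_1}(c_2) > 0$. Hence we may choose $N = N(\delta_1, \delta_2)$ large enough that both linear inequalities
\[ N\ell_{T_2}(c_1) - C \geq (M+1)\lambda^{-1}, \qquad N\ell_{T_1}(c_2) - C \geq (M+1)\lambda \]
hold. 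For such $N$ and any $m,n \geq N$, Lemma \ref{lm:pingpong} confirms that the group $G = \la \delta_1^m, \delta_2^n \ra$ satisfies the hypotheses of Lemma \ref{pingpong} with generators $g_1 = \delta_1^m$, $g_2 = \delta_2^n$ acting on $\CX$.

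Lemma \ref{pingpong} then yields statement (1) immediately, and tells us that any $\phi \in G$ not conjugate to a power of $\delta_1^m$ or $\delta_2^n$ acts on $\CX$ without periodic points. The final step is to translate ``no periodic point on $\CX$'' into the conclusion of (2). If some nontrivial power $\phi^p$ fixed the conjugacy class of a proper free factor, that class would be a periodic point of $\phi$ in $\CX$, contradicting full irreducibility's defining condition; hence $\phi$ is fully irreducible. Likewise, if some nontrivial $\phi^p$ fixed the conjugacy class of a cyclic subgroup $\la g \ra$, that would be a periodic point of $\phi$ in $\CX$, so no such class exists, meaning $\phi$ is atoroidal, hence hyperbolic by the cited theorems of Bestvina--Feighn, Brinkmann and Gersten.

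I expect no serious obstacle in assembling the argument this way, since all the heavy lifting has been done: Theorem \ref{th:B} provides the linear growth estimate needed for the ping pong dynamics, Proposition \ref{prop:safe} absorbs the error term, and Lemma \ref{Handel} plus (\textbf{F1}) supply the positivity that makes $|\cdot|$ a legitimate norm on $\CX$. The one subtlety to verify carefully is that Theorem \ref{th:B} is applicable to every element of $\CX$, which is fine because proper free factors are malnormal of rank $\leq k-1$ and cyclic subgroups are of course cyclic of rank $1$, so the uniform bound $R = k-1$ suffices.
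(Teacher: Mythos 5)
Your proposal is correct and follows essentially the same route as the paper's proof: the same set $\CX$, the same disjoint decomposition via an irrational $\lambda$, the same norm $|X|=\vol_{T_1}(X)+\vol_{T_2}(X)$, and the same appeal to Lemma \ref{lm:pingpong} to verify the hypotheses of Lemma \ref{pingpong}, followed by the same translation of ``no periodic points'' into full irreducibility and hyperbolicity. The only point the paper includes that you omit is the one-sentence reduction (Remark \ref{remark:verysmall}) to the case where $T_1$ and $T_2$ are very small, which is needed before Theorem \ref{th:B}, Proposition \ref{prop:safe}, and Lemma \ref{lm:pingpong} apply; conversely, your explicit verification via (\textbf{F1}) that $\ell_{T_2}(c_1)>0$ and $\ell_{T_1}(c_2)>0$, so that a suitable $N$ exists, is a detail the paper leaves implicit.
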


\begin{proof}
  As mentioned in Remark \ref{remark:verysmall}, without loss of
  generality, we can assume that $\delta_1$ and $\delta_2$ are
  associated to very small cyclic trees for $F_k$.  Using the above
  set-up and notation, let $\lambda$ be an irrational number such that
  $\max\{ \lambda,\lambda^{-1}\} \leq 2$.  Because $\lambda$ is
  irrational, the set $\CX$ is equal to the disjoint union $\CX_1
  \sqcup \CX_2$.  Let $N$ by large enough such that:
  \begin{equation*}
    N\ell_{T_2}(c_1) - C \geq 2(M+1) \mbox{ and } 
    N\ell_{T_1}(c_2) - C \geq 2(M+1).
  \end{equation*}
  Then Lemma \ref{lm:pingpong} implies that for $m,n \geq N$, the
  action of the group $\la \delta_1^m,\delta_2^n\ra$ on $\CX$
  satisfies the hypotheses of Lemma \ref{pingpong} with the function
  $|X| = \vol_{T_1}(X) + \vol_{T_2}(X)$.  It follows that $\la
  \delta_1^m,\delta_2^n \ra \simeq F_2$. Further, the Lemma
  \ref{pingpong} implies that if $\phi \in \la \delta_1^m, \delta_2^n
  \ra$ is not conjugate to a power of either $\delta^m_1$ or
  $\delta^n_2$ then $\phi$ acts on $\CX$ without periodic orbits.  As
  $\CX$ contains all of the conjugacy classes of proper free factors,
  $\phi$ is fully irreducible. Likewise, as $\CX$ contains all of the conjugacy
  classes of cyclic subgroups, $\phi$ is hyperbolic.
\end{proof}
\begin{rmk}\label{rm:directpingpong}
  By applying the ping pong argument using Lemma \ref{lm:pingpong}
  directly to the word $w =\delta_1^{\epsilon_1}\delta_2^{\kappa_1}
  \cdots \delta_1^{\epsilon_n}\delta_2^{\kappa_n}$ where $n \geq 2$,
  and $|\epsilon_i|,|\kappa_i| \geq N$, except possibly $\epsilon_1 =
  0$ or $\kappa_n = 0$, we can see that $w$ is nontrivial.
  Additionally, if both $|\epsilon_1|$ and $|\kappa_n|$ are equal
  to 0, or if both are at least $N$, then $w$ is a fully irreducible
  hyperbolic element of $\Out F_k$.
\end{rmk}

\begin{rmk}\label{rm:Mangahas}
  Inspired by Hamidi-Tehrani's approach, Mangahas \cite{Mangahas}
  proved that subgroups of the mapping class group have uniform
  exponential growth with a uniform bound depending only on the
  surface and not on the subgroup. It is possible that Theorem
  \ref{th:A} is a step towards proving Mangahas' theorem for $\Out
  F_k$, although much of the machinery she uses for the mapping class
  group is still undeveloped in the $\Out F_k$ setting.
\end{rmk}

\section{Coarse biLipschitz equivalence}\label{sc:bilip}

Using the techniques developed in Sections \ref{counting} and
\ref{sc:growth} we can now prove that the sum of the free volumes of a
proper free factor for two filling very small cyclic trees is
biLipschitz equivalent to the free volume of the free factor for any
tree in Outer space.  Kapovich and Lustig showed this equivalence for
a cyclic subgroup \cite{KL_Intersection}.

\begin{thm}\label{th:C}
  Let $T_1$ and $T_2$ be two very small cyclic trees for $F_k$ that
  fill, and let $T \in cv_k$. Then there is a constant $K$ such that for
  any proper free factor or cyclic subgroup $X \subset F_k$:
  \begin{equation}\label{eq:bilip}
    \frac{1}{K}\vol_T(X) \leq \vol_{T_1}(X) + \vol_{T_2}(X) \leq K\vol_T(X).
  \end{equation}
\end{thm}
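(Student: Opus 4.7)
The proof splits into the upper bound $\vol_{T_1}(X)+\vol_{T_2}(X)\leq K\vol_T(X)$ and the lower bound $\vol_T(X)\leq K(\vol_{T_1}(X)+\vol_{T_2}(X))$, both built on the graph composition framework developed in Section \ref{counting}. By biLipschitz equivalence of trees in $cv_k$, I may replace $T$ with the universal cover $\tilde\Lambda_\CA$ of a rose for some basis $\CA$ of $F_k$, absorbing a bounded multiplicative error into $K$. Fix bases $\CT_1,\CT_2$ relative to $T_1,T_2$ together with homotopy equivalences $\nu_i\co\Lambda_\CA\to\Lambda_{\CT_i}$ and $\mu_i\co\Lambda_{\CT_i}\to\Lambda_\CA$ realizing the identity on $F_k$, and set $L_i=\max_{a\in\CA}|\nu_i(a)|_{\CT_i}$ and $M_i=\max_{y\in\CT_i}|\mu_i(y)|_\CA$.

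For the upper bound, start from an immersed core graph $\CG^X_\CA\to\Lambda_\CA$, whose edge count is $\vol_T(X)$. The graph composition $(\CG^X_\CA)_{\Lambda_{\CT_i}}$ via $\nu_i$ has at most $L_i\vol_T(X)$ edges, and folding and pruning to obtain $\CG^X_{\CT_i}$ does not increase this count. By Theorems \ref{prop:amalgam-free-volume} and \ref{prop:hnn-free-volume}, $\vol_{T_i}(X)$ equals the number of essential pieces in $\CG^X_{\CT_i}$, which is at most its edge count. Hence $\vol_{T_i}(X)\leq L_i\vol_T(X)$, and summing gives the upper bound with $K=L_1+L_2$.

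For the lower bound, I first reduce to malnormal $X$: proper free factors are malnormal, and for cyclic $X=\la h^m\ra$ with $h$ indivisible both sides of the desired inequality scale linearly in $m$ (since $T^X$ and $T_i^X$ both equal the axis of $h$), so it suffices to prove the inequality for the malnormal subgroup $\la h\ra$. Lemma \ref{lm:malnormal-freevolume} then yields $|E(T_i^X/X)|\leq\vol_{T_i}(X)+1$. The Stallings core graph $\CG^X_{\CT_i}$ is obtained from $T_i^X/X$ by blowing up each vertex $v$ with an immersed core graph for its vertex subgroup $G_v\subseteq X$, so
\[
|E(\CG^X_{\CT_i})|=|E(T_i^X/X)|+\sum_v|E(\mathrm{core}_{\CT_i}(G_v))|.
\]
Running graph composition in reverse via $\mu_i$ and folding/pruning produces $\CG^X_\CA$ with $\vol_T(X)\leq M_i\cdot|E(\CG^X_{\CT_i})|$, so the entire estimate reduces to bounding the blow-up sum.

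The main obstacle is controlling $\sum_v|E(\mathrm{core}_{\CT_i}(G_v))|$. Each vertex group $G_v$ lies in a conjugate of a vertex group of $T_i$, so $\vol_{T_i}(G_v)=0$; worse, in general $G_v$ is neither a proper free factor of $F_k$ nor cyclic, so (\textbf{F1}) does not apply to $G_v$ directly. The filling hypothesis must be leveraged at the level of $X$ instead: each edge of $\mathrm{core}_{\CT_i}(G_v)$ pulls back under $\mu_i$ to an edge of $\CG^X_\CA$ whose label, by (\textbf{F2}) (which follows from filling), cannot simultaneously stabilize vertices in both $T_1$ and $T_2$, and hence must contribute to the free volume with respect to $T_{3-i}$. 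Aggregating this detection across all vertices $v$ of the graph of groups $T_i^X/X$ bounds the internal complexity by a constant times $\vol_{T_{3-i}}(X)$, yielding the required linear bound. This interlocking of the $T_i^X/X$ decomposition with the complementary tree $T_{3-i}$ is the most intricate step and is where I expect the primary work of the proof to concentrate.
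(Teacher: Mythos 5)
Your upper bound is essentially the paper's argument and is fine (modulo the small point that the number of essential chains is bounded by a constant times the edge count rather than by the edge count itself, since a chain occupies several edges; this only changes $K$). The lower bound, however, has a genuine gap, and in fact two. First, the structural identity
\[
|E(\CG^X_{\CT_i})| \;=\; |E(T_i^X/X)| \;+\; \sum_v |E(\mathrm{core}_{\CT_i}(G_v))|
\]
is false. The vertex groups $G_v$ of $T_i^X/X$ are the intersections $X \cap gAg^{-1}$, which are typically trivial or very small, whereas the Stallings core graph records the full word length of generators of $X$. Already for $X = \la h \ra$ with $h = a_1^{100}b$ and the splitting $\la a_1,a_2\ra *_{\la c\ra}\la c,b\ra$, the core graph is a circle with $101$ edges, $T_i^X/X$ is a circle with $\ell_{T_i}(h)=2$ edges, and all vertex groups are trivial. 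The material unaccounted for is the passage of $\tilde\Lambda_i^X$ through the subtrees $K\in\CV(\CA_i)$: the intersections $\tilde\Lambda_i^X\cap K$ can be large finite trees even when the corresponding stabilizer in $X$ is trivial, so they contribute many edges to $\CG^X_{\CT_i}$ but nothing to either term on your right-hand side. Second, even with a corrected decomposition, the step that is supposed to carry the whole proof --- that each edge of the ``internal'' part ``must contribute to the free volume with respect to $T_{3-i}$'' by (\textbf{F2}) --- is an assertion, not an argument. (\textbf{F2}) gives that every nontrivial element of a $T_i$--vertex stabilizer is hyperbolic in $T_{3-i}$, hence $\vol_{T_{3-i}}$ of such a subgroup is at least its rank; but rank does not control word length, so this cannot bound the internal edge count linearly in $\vol_{T_{3-i}}(X)$ without a uniform quantitative input that the filling condition alone does not supply.

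The paper's route is different precisely at this point: the cyclic case is not reduced away but is instead imported as a black box, namely Kapovich--Lustig's theorem that for filling trees there is a uniform $K'$ with $\frac{1}{K'}\ell_{\tilde\Lambda_1}(g)\le \ell_{T_1}(g)+\ell_{T_2}(g)$ for all $g\in F_k$. For a noncyclic free factor $X$, one cuts $\Lambda_1^X$ at its valence $\ge 3$ vertices into at most $3k-3$ segments, passes to cyclically reduced subsegments of at least half the length, applies the Kapovich--Lustig inequality to each such segment viewed as a conjugacy class, and then uses bounded cancellation to show that all but $(2B+2)(3k-3)$ of the essential pieces so produced survive folding into $\CG_2^X$; the additive loss is absorbed into the multiplicative constant because filling guarantees $\vol_{T_1}(X)+\vol_{T_2}(X)\ge 1$. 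If you want to avoid citing Kapovich--Lustig, you would need to prove a uniform translation-length comparison for all of $F_k$ yourself, which is exactly the content your (\textbf{F2}) step is silently assuming.
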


\begin{proof}
  First, recall that for any trees $T$ and $T'$ in $cv_k$, there is a
  constant $K_0$ such that for any free factor or cyclic group $X$
  \[ \frac{1}{K_0} \vol_T(X) \leq \vol_{T'}(X) \leq K_0 \vol_T(X). \]
  Thus to prove \eqref{eq:bilip} we can just let $T$ be the tree
  $\tilde{\Lambda}_1$, where $\Lambda_1 = \Lambda_{\CT_1}$ and $\CT_1$
  is a basis for $F_k$ relative to $T_1$, metrized such that every
  edge has length 1.  Further consider the tree $\tilde{\Lambda}_2$, where $\Lambda_2 = \Lambda_{\CT_2}$, and where $\CT_2$ is a basis for $F_k$
  relative to $T_2$, again metrized such that every edge has length 1.

  Fix a constant $K_1$ such that for any free factor or cyclic
  subgroup $X$
  \[ \frac{1}{K_1} \vol_{\tilde{\Lambda}_1}(X) \leq
  \vol_{\tilde{\Lambda}_2}(X) \leq K_1 \vol_{\tilde{\Lambda}_1}(X).\]
  As the rank of $X$ is bounded, each edge in $\tilde{\Lambda}_1^X/X$
  and $\tilde{\Lambda}_2^X/X$ can be contained in only boundedly many
  essential chains independent of $X$. By Theorems
  \ref{prop:amalgam-free-volume} and \ref{prop:hnn-free-volume}, for
  $i=1,2$, the free volume $\vol_{T_i}(X)$ is equal to the total
  number of simply connected chains and essential vertices of
  $\tilde{\Lambda}_i^X/X$.  Thus this is less than some constant $D$
  times the total number of edges, $\vol_{\tilde{\Lambda}_i}(X)$, of
  the graph $\tilde{\Lambda}_i^X/X$. Thus we have $\vol_{T_1}(X) \leq
  D\vol_{\tilde{\Lambda}_1}(X)$ and $\vol_{T_2}(X) \leq
  D\vol_{\tilde{\Lambda}_2}(X)$, and hence
  \begin{align*}
    \vol_{T_1}(X) + \vol_{T_2}(X) & \leq D\vol_{\tilde{\Lambda}_1}(X) +
    D\vol_{\tilde{\Lambda}_2}(X) \\
    & \leq D\vol_{\tilde{\Lambda}_1}(X) + DK_1 \vol_{\tilde{\Lambda}_1}(X) \\
    & = D(K_1 + 1) \vol_{\tilde{\Lambda}_1}(X)
  \end{align*}
  which shows the right hand inequality of \eqref{eq:bilip}.

  By \cite[Theorem 1.4]{KL_Intersection}, there exists a constant $K'$
  such that for $g \in F_k$:
  \[ \frac{1}{K'} \ell_{\tilde{\Lambda}_1}(g) \leq \ell_{T_1}(g) +
  \ell_{T_2}(g) \leq K' \ell_{\tilde{\Lambda}_1}(g). \] This is
  \eqref{eq:bilip} when $X = \la g \ra$.
  
  Otherwise, as $X$ is a proper (noncyclic) free factor, deleting
  vertices of $\Lambda_1^X = \tilde{\Lambda}_1^X/X$ with valence $\geq
  3$ results in at most $3k - 3$ segments; denote these segments by
  $S(\Lambda^X_1)$.  Then for each such segment $\alpha \in
  S(\Lambda^X_1)$, there is a subsegment $\alpha' \subseteq \alpha$
  such that $|\alpha'|_{\CT_1} \geq \frac{1}{2}|\alpha|_{\CT_1}$ and
  $\alpha'$ is cyclically reduced with respect to $\CT_1$.  Hence:
  \[ \vol_{\tilde{\Lambda}_1}(X) = \sum_{\alpha \in S(\Lambda_1^X)}
  |\alpha|_{\CT_1} \leq 2\sum_{\alpha \in S(\Lambda_1^X)}
  |\alpha'|_{\CT_1} = 2\sum_{\alpha \in S(\Lambda_1^X)}
  \ell_{\tilde{\Lambda}_1}(\alpha').\] For each such $\alpha'$, let
  $\alpha'_{\Lambda_2}$ be its image under graph composition using the
  change of marking homotopy equivalence $\nu \co \Lambda_1 \to
  \Lambda_2$.  We can get a lower bound on $\vol_{T_1}(X) +
  \vol_{T_2}(X)$ by estimating the sum of the number of essential pieces in
  the segments $\alpha'$ and the number of essential pieces in the
  segments $\alpha'_{\Lambda_2}$ that survive after folding
  $(\Lambda_1^X)_{\Lambda_2} \to \Lambda_2$.  Notice that:
  \begin{align*}
    &\sum_{\alpha \in S(\Lambda^X_1)} \#\mbox{essential pieces in }
    \alpha' + \#\mbox{essential pieces in } \alpha'_{\Lambda_2} \\
    & \hskip 5 cm =\sum_{\alpha \in S(\Lambda^X_1)} \ell_{T_1}(
    \alpha') + \ell_{T_2}(\alpha') \\
    & \hskip 5 cm \geq \frac{1}{K'} \sum_{\alpha \in S(\Lambda^X_1)}
    \ell_{\tilde{\Lambda}_1}(\alpha') \\
    & \hskip 5 cm \geq \frac{1}{2K'}\vol_{\tilde{\Lambda}_1}(X)
  \end{align*}

  Let $B = BCC(\CT_1,\CT_2)$.  As in Section \ref{sc:growth}, we 
  lose at most the extremal $B$ edges of $\alpha'_{\Lambda_2}$ whilst
  folding and pruning $(\Lambda_1^X)_{\Lambda_2} \to \Lambda_2$,
  thus eliminating at most $2B+2$ essential pieces from
  $\alpha_{\Lambda_2}$.  Thus we have
  \begin{equation*}
    \vol_{T_1}(X) + \vol_{T_2}(X) \geq \frac{1}{2K'} 
    \vol_{\tilde{\Lambda}_1}(X) - (2B+2)(3k - 3).
  \end{equation*}
  In other words:
  \begin{equation*}
    \vol_{T_1}(X) + \vol_{T_2}(X)\bigl(1 + (2B+2)(3k - 3) \bigr) 
    \geq \frac{1}{2K'} \vol_{\tilde{\Lambda}_1}(X)
  \end{equation*}
  as $\vol_{T_1}(X) + \vol_{T_2}(X) \geq 1$.  Choosing $K = \max \{
  K_1 + 1, 2K'\bigl(1 + (2B+2)(3k-3)\bigr) \}$ completes the proof.
\end{proof}

\bibliography{bibliography}

\bibliographystyle{siam}

\end{document}